\tikzset{
	element/.style={circle,fill=black,scale=0.5}
}
\newtheorem{theorem}{Theorem}[section]
\newtheorem{lemma}[theorem]{Lemma}
\newtheorem{corollary}[theorem]{Corollary}
\newtheorem{sublemma}{}[theorem]
\numberwithin{equation}{section} 
\newcommand{\cl}{{\rm cl}}
\newcommand{\si}{{\rm si}}
\newcommand{\delete}{\backslash}
\newcommand{\cC}{\mathcal{C}}
\newcommand{\cS}{\mathcal{S}}
\newcommand{\cW}{\mathcal{W}}
\g@addto@macro\bfseries{\boldmath} 
\title[The Excluded Minors of Spike Minors]{The Excluded Minors of the Class of Spike Minors}
\author[S.\ Bastida, N.\ Brettell, R.\ Campbell et al.]{Sam Bastida \and Nick Brettell \and Rutger Campbell \and George Drummond \and Emma Hogan \and Charles Semple \and Gerry Toft}
\address{School of Mathematics and Statistics, Victoria University of Wellington, New Zealand}
\email{samuel.a.bastida@gmail.com}
\address{School of Mathematics and Statistics, Victoria University of Wellington, New Zealand}
\email{nick.brettell@vuw.ac.nz}
\address{School of Computing, KAIST, Daejeon, Korea}
\email{rjvdc@rogers.com}
\address{School of Mathematics and Statistics, University of Canterbury, Christchurch, New Zealand}
\email{george.drummond@canterbury.ac.nz}
\address{Mathematical Institute, University of Oxford, Oxford, UK}
\email{emma.hogan@maths.ox.ac.uk}
\address{School of Mathematics and Statistics, University of Canterbury, Christchurch, New Zealand}
\email{charles.semple@canterbury.ac.nz}
\address{School of Mathematics and Statistics, University of Canterbury, Christchurch, New Zealand}
\email{gerrytoft3@gmail.com}
\subjclass{05B35}
\keywords{matroids, spikes, spike minors, excluded minors}
\date{\today}
\begin{document}

\maketitle

\begin{abstract}
Mayhew et al.\ (2021) posed the problem of showing that the minor-closed class of spikes and their minors has a finite set of excluded minors and describing all of them. In this paper, we resolve this problem.
\end{abstract}
	
\section{Introduction}	

Spikes arise in a variety of contexts throughout matroid theory, such as counterexamples to conjectures and matroids with surprising properties. For example, Kahn~\cite{kah88} conjectured that, for each prime power $q$, there is an integer $n(q)$ such that a $3$-connected $GF(q)$-representable matroid has at most $n(q)$ inequivalent $GF(q)$-representations. Oxley et al.~\cite{ox96} showed that the conjecture is false for all $q\ge 7$ using two classes of matroids, one of which is the class of tipped free spikes. Furthermore, Seymour~\cite{sey81} used tipless spikes to show that there is no polynomial-time algorithm to decide if an arbitrary matroid is binary. These examples are indicative of spikes. With justification, Geelen~\cite{gee08} writes ``{\em all of the horrors {\rm (of arbitrary matroids)} are inherent to spikes}''. More recently, subclasses of spikes and their minors have been shown to be ``fractal''.  Let $k$ be a non-negative integer and let $\cS_k$ denote the class of matroids obtained by taking the class of tipless spikes with $k$ circuit-hyperplanes and closing the class under minors. For all even positive integers $2t$, let $\cS_k(2t)$ denote the subset of $2t$-element matroids in $\cS_k$ and let $\mathcal E\mathcal X_k(2t)$ denote the set of $2t$-element excluded minors for $\cS_k$. Mayhew et al.~\cite{may21} showed that, if $k\ge 5$, then the ratio
$$\lim_{t\rightarrow \infty}\frac{|\mathcal E\mathcal X_k(2t)|}{|\cS_k(2t)|+|\mathcal E\mathcal X_k(2t)|}\rightarrow 1.$$
That is, as $t\rightarrow \infty$, the number of $2t$-element excluded minors for $\cS_k$ dominates the number of $2t$-element matroids in $\cS_k$. The class $\cS_k$ is said to be ``weakly fractal''. In the process of proving this result, the authors posed the problem of showing that the class of spikes and their minors have a finite number of excluded minors, and describing these excluded minors~\cite[Problem~3.11]{may21}. In this paper, we resolve this problem and also establish the analogous result for $3$-connected matroids. We next state the main results of the paper.

Let $r\ge 3$ be an integer. A rank-$r$ matroid $M$ with ground set $\{t, x_1, y_1, x_2, y_2, \ldots, x_r, y_r\}$ is a {\em rank-$r$ tipped spike} if $M$ has the following properties:
\begin{enumerate}[(i)]
\item for all $i\in \{1, 2, \ldots, r\}$, the set $\{t, x_i, y_i\}$ is a triangle, and

\item for all non-empty $J \subset \{1, 2, \ldots, r\}$, the set
$$
\bigcup_{i \in J} \{x_i, y_i\}
$$
has rank $|J| + 1$.
\end{enumerate}
We refer to $t$ as the {\em tip} of $M$ and the members of
$$\big\{\{x_i, y_i\}: i\in \{1, 2, \ldots, r\}\big\}$$
as the {\em legs} of $M$. If a matroid can be obtained from $M$ by deleting $t$, it is called a {\em rank-$r$ tipless spike}. In general, a {\em rank-$r$ spike} refers to either a rank-$r$ tipped spike or a rank-$r$ tipless spike. We use $\mathcal S$ to denote the class of spikes and their minors. Note the class of tipless spikes and their minors coincides with the class of tipped spikes and their minors. While the class of spikes is not closed under minors, it is closed under duality~\cite{may21}, and so $\cS$ is also closed under duality.

The first main result of the paper is Theorem~\ref{main1}, which shows that the excluded minors of $\cS$ consist of variants of uniform matroids and their direct sums, as well as three other matroids, which we denote by $P_7^-$, $P_7^=$, and $P_8$. The matroids $P_7^-$ and $P_7^=$ are obtained from the rank-$3$ tipped spike $P_7$ by relaxing exactly one or exactly two circuit-hyperplanes that include the tip, respectively. Geometric representations of $P_7$, $P_7^-$, and $P_7^=$ are given in Figure~\ref{fig: P7 relaxations}, and a geometric representation of $P_8$ is given in Figure~\ref{fig: P8}. Let $\{\ell_1, \ell_2, \ldots, \ell_k\}$ be a multi-set of positive integers exceeding one, and let $\{x_1, x_2, \ldots, x_k\}$ be a subset of the ground set of the uniform matroid $U_{r, n}$. We use $\{\ell_1, \ell_2, \ldots, \ell_k\}$-$U_{r, n}$ to denote the matroid obtained from $U_{r, n}$ by, for all $i\in \{1, 2, \ldots, k\}$, replacing the element $x_i$ with a parallel class of size $\ell_i$. Thus, for example, $\{2, n\}$-$U_{2, 4}$ denotes the matroid obtained from $U_{2, 4}$ by replacing one element with a parallel class of size two and a second element with a parallel class of size $n$. For ease of reading, we will also use $2U_{r, n}$ to denote the matroid obtained from $U_{r, n}$ by replacing each element with a parallel class of size two.

\begin{figure}
\centering
\begin{subfigure}{0.3\textwidth}
\centering
\begin{tikzpicture}[scale=1.5]
\begin{scope}[every node/.style=element]
\node (x1) at (-30:1) {};
\node (x2) at (90:1) {};
\node (x3) at (-150:1) {};
\coordinate (c) at (0,0) {};	
\node (y1) at ($(x1)!0.5!(x2)$) {};
\node (y2) at ($(x2)!0.5!(x3)$) {};
\node (y3) at ($(x3)!0.5!(x1)$) {};
\draw (x1) to (x2) to (x3) to (x1);
\draw (y3) to (x2);
\node (c') at (intersection of  y1--y2 and y3--x2) {};
\draw (y1) to (c') to (y2);
\end{scope}
\end{tikzpicture}
\subcaption{$P_7$} \label{fig: p7}
\end{subfigure}
\begin{subfigure}{0.3\textwidth}
\centering
\begin{tikzpicture} [scale=1.5]
\begin{scope}[every node/.style=element, shift={(3,0)}]
\node (x1) at (-30:1) {};
\node (x2) at (90:1) {};
\coordinate (x3) at (-150:1) {};
\coordinate (c) at (0,0) {};	
\node (y1) at ($(x1)!0.5!(x2)$) {};
\node (y2) at ($(x2)!0.5!(x3)$) {};
\node (y3) at ($(x3)!0.5!(x1)$) {};
\node at ($(x3)!0.2!(x1)$) {};
\draw (x1) to (x2) to (x3) to (x1);
\draw (y3) to (x2);
\node (c') at (intersection of  y1--y2 and y3--x2) {};
\draw (y1) to (c') to (y2);
\end{scope}
\end{tikzpicture}
\subcaption{$P_7^-$} \label{fig: p7-}
\end{subfigure}
\begin{subfigure} {0.3\textwidth}
\centering
\begin{tikzpicture} [scale=1.5]
\begin{scope}[every node/.style=element, shift={(6,0)}]
\coordinate (x1) at (-30:1) {};
\node (x2) at (90:1) {};
\coordinate (x3) at (-150:1) {};
\coordinate (c) at (0,0) {};	
\node (y1) at ($(x1)!0.5!(x2)$) {};
\node (y2) at ($(x2)!0.5!(x3)$) {};
\node (y3) at ($(x3)!0.5!(x1)$) {};
\node at ($(x3)!0.2!(x1)$) {};
\node at ($(x1)!0.2!(x3)$) {};
\draw (x1) to (x2) to (x3) to (x1);
\draw (y3) to (x2);
\node (c') at (intersection of  y1--y2 and y3--x2) {};
\draw (y1) to (c') to (y2);
\end{scope}
\end{tikzpicture}
\subcaption{$P_7^=$}
\end{subfigure}
\caption{The rank-$3$ matroids $P_7$, $P_7^-$, and $P_7^=$.}
\label{fig: P7 relaxations}
\end{figure}
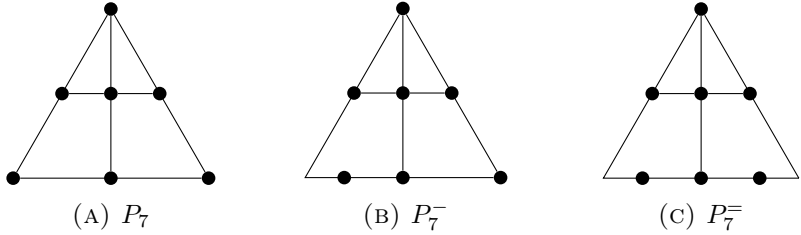

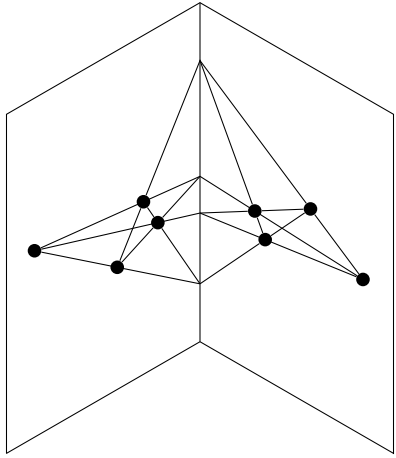
\begin{figure}
\begin{tikzpicture}[scale=1.1]
\begin{scope}[every node/.style=element]
\coordinate (t) at (90:1.8) {};
\coordinate(c) at (0,0);
\coordinate (c1) at (0,0.4);
\coordinate (c3) at (0,-0.9) {};
\coordinate (0010) at (0,-0.5) {};
\node (1) at ($(0010)+(180:2)$) {};
\node (5) at ($(1)!0.5!(c3)$) {};
\node (8) at (intersection of 5--t and 1--c1) {};
\node (4) at (intersection of 5--c1 and 8--c3) {};
\coordinate (c2) at (intersection of 1--4 and c--t) {};
\node (3) at ($(0010)+(-10:2)$) {};
\node (6) at ($(3)!0.6!(c2)$) {};
\node (7) at (intersection of c3--6 and t--3) {};
\node (2) at (intersection of c2--7 and c1--3) {};
\end{scope}
\draw (1) to (c1) to (3) to (c2) to (1) to (c3);
\draw (5) to (8) to (t) to (2) to (6);
\draw (3) to (7) to (t);
\draw (5) to (4) to (c1);
\draw (c2) to (2) to (7);
\draw (8) to (4) to (c3);
\draw (c3) to (6) to (7);
\coordinate (T) at (90:2.5);
\coordinate (B) at (-90:1.6);
\coordinate (TR) at ($(T)+(-30:2.7)$);
\coordinate (BR) at  ($(B)+(-30:2.7)$);
\coordinate (BL) at ($(B)+(210:2.7)$);
\coordinate (TL) at ($(T)+(210:2.7)$);
\draw (BL) to (TL) to (T) to (TR) to (BR) to (B) to (BL);
\draw (T) to (B);
\end{tikzpicture}
\caption{$P_8$, the unique excluded minor for $\cS$ of rank and corank at least four.}
\label{fig: P8}
\end{figure}

\begin{theorem}
A matroid $M$ is a minor of a spike if and only if neither $M$ nor $M^*$ has a minor isomorphic to one of the following matroids:
\begin{enumerate}[{\rm (i)}]
\item $U_{1, 5}\oplus U_{0, 2}$,

\item $U_{2, 4}\oplus U_{0, 1}$,\, $U_{1, 3}\oplus U_{1, 1}\oplus U_{0, 1}$,\, $U_{2, 6}$,\, $U_{1, 3}\oplus U_{1, 3}$,\, $\{2\}$-$U_{2, 3}\oplus U_{1, 2}$,\, $\{2, 2, 2\}$-$U_{2, 4}$,\, $\{2, 2\}$-$U_{2, 5}$,

\item $P^-_7$,\, $P^=_7$, and

\item $P_8$.
\end{enumerate}
\label{main1}
\end{theorem}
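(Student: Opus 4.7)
The plan is to establish both directions of the biconditional. The forward direction reduces to showing that each matroid listed in (i)--(iv) lies outside $\cS$: since $\cS$ is closed under minors and duality, this immediately gives that no such matroid can occur as a minor of $M$ or $M^*$ when $M\in \cS$. Each verification is a finite check. The matroids in (i) and (ii) fail because of forced loops, coloops, excess parallel classes, or densities incompatible with the rigid triangle structure of a spike. The rank-$3$ matroids $P_7^-$ and $P_7^=$ are relaxations of the unique rank-$3$ tipped spike $P_7$, and one verifies directly that the relaxed circuit-hyperplane structure cannot be embedded as a minor of any rank-$r$ spike. The matroid $P_8$ is handled via its self-duality and its specific rank-$4$ flat structure.

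The substance of the theorem is the reverse direction, equivalently the statement that every excluded minor of $\cS$ is among those listed. Let $M$ be an excluded minor of $\cS$; then every proper minor of $M$ lies in $\cS$, and $M^*$ is also an excluded minor. I would proceed by a case analysis on the connectivity and rank of $M$, using duality to replace $M$ with $M^*$ when convenient.

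First dispose of low connectivity. If $M$ has a loop, coloop, parallel pair, series pair, or a $1$- or $2$-separation, then the forced direct-sum or $2$-sum structure of $M$ realizes one of the disconnected matroids in (i)--(ii), such as $U_{1,5}\oplus U_{0,2}$, $U_{1,3}\oplus U_{1,3}$, or $\{2\}$-$U_{2,3}\oplus U_{1,2}$. This reduces to the case where $M$ is $3$-connected. Now split on rank. If $\min\{r(M), r^*(M)\} \le 2$, classify $3$-connected rank-$2$ matroids by their parallel-class sizes and verify that avoiding $U_{2,6}$, $\{2,2\}$-$U_{2,5}$, and $\{2,2,2\}$-$U_{2,4}$ exactly characterizes membership in $\cS$ at this rank. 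If $\min\{r(M), r^*(M)\} = 3$, show that the $3$-connected rank-$3$ members of $\cS$ are precisely the minors of $P_7$, and that any deviation from this family forces either a rank-$2$ obstruction from (ii) or, by relaxing a circuit-hyperplane, produces $P_7^-$ or $P_7^=$ as a minor.

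The main obstacle is the remaining case: $M$ is $3$-connected with $r(M), r^*(M) \ge 4$, and the goal is to exhibit $P_8$ as a minor. The approach is to establish a splitter- or chain-type result showing that absence of all the smaller listed obstructions forces $M$ to contain, after a controlled sequence of minor operations, a sufficiently large spike-like skeleton with a single structural discrepancy. One then argues that this discrepancy, regardless of its precise form, can be massaged via further deletions and contractions into the specific rank-$4$ configuration of $P_8$. Managing this descent, and ruling out any other $3$-connected excluded minor of higher rank, is the most delicate and combinatorially intensive part of the argument, and is expected to be the core technical contribution of the paper.
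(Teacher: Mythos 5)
There is a genuine gap, and it concentrates in exactly the place the paper spends most of its effort.

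Your sketch for ranks $\le 3$ contains a false claim: the $3$-connected rank-$3$ members of $\cS$ are \emph{not} precisely the minors of $P_7$. A rank-$3$ tipped spike is not a single matroid; it is a family parameterised by its set of traversals (Lemma~\ref{circuits}), and this family includes matroids that are not minors of $P_7$, e.g.\ the free rank-$3$ tipped spike (no traversals) and $F_7$ (four traversals), both on $7$ elements like $P_7$. So the ``deviation from the minors of $P_7$'' dichotomy you propose does not hold as stated, and the paper's Lemma~\ref{lemma: small spike-minors} instead has to run a fairly involved case analysis over triangles, parallel classes, and element counts. More broadly, the paper does not split on $3$-connectivity at all in this part: the rank-$\le 3$ excluded minors include disconnected and non-simple matroids (e.g.\ $U_{2,4}\oplus U_{0,1}$ and $\{2,2\}$-$U_{2,5}$), and these are handled together with the connected ones by filtering on rank via the explicit restriction description of spike minors in Lemma~\ref{lemma: spike degeneracy}.

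The larger problem is the case $r(M), r^*(M)\ge 4$. Your plan appeals to ``a splitter- or chain-type result'' producing a ``spike-like skeleton with a single structural discrepancy'' that is then ``massaged'' into $P_8$. That is not a proof outline; no such splitter statement is proved or even clearly formulable, and it is not the route the paper takes. The paper's argument is entirely different in structure: it first shows an excluded minor $M$ with $r,r^*\ge 4$ must be simple and cosimple, have no triangles or triads, and have no $U_{3,5}$-restriction in $M$ or $M^*$; it then proves the pivotal rank bound $\min\{r(M),r^*(M)\}\le 4$ (sublemma~\ref{rank bound}), which forces $r(M)=r^*(M)=4$ and $|E(M)|=8$. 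Everything after that is an explicit analysis of $8$-element rank-$4$ matroids, resting on a uniqueness statement (\ref{cor:buddy_lemma}) for the ``buddy'' of each element and culminating in $M\backslash x\cong P_7^*$ and $M/x\cong P_7$ for every $x$, so $M\cong P_8$ by Lemma~\ref{lem: p8}. Your proposal omits the rank-bound step entirely, and without it there is no finite configuration to analyse; the subsequent ``massage'' step has nothing to anchor to. Note also that since $P_8$ is itself an excluded minor, the goal in this case is to show $M\cong P_8$, not merely to exhibit $P_8$ as a minor; your phrasing conflates the two.
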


\noindent The matroids in Theorem~\ref{main1} are grouped so that the matroids in (i), (ii), (iii), and (iv) each have rank one, two, three, and four, respectively.
	
In addition to Theorem~\ref{main1}, we prove the analogue of this theorem for when the matroids under consideration are $3$-connected. Let $\cS_3$ denote the subset of $\cS$ consisting of the $3$-connected members of $\cS$. Note that, while $\cS_3$ is not closed under minors, it is closed under duality. Let $O_7$ be the matroid obtained from the rank-$3$ wheel by freely adding an element to a rank-$2$ flat of size $3$. Similarly, let $O_7^-$ be the matroid obtained from the rank-$3$ whirl by freely adding an element to a rank-$2$ flat of size $3$. Geometric representations of $O_7$ and $O_7^-$ are given in Figure~\ref{L7 and L7'}. Observe that $O_7^-$ is obtained from $O_7$ by relaxing any one of its three circuit-hyperplanes. We denote by $AG(2, 3)\backslash e$ the unique (up to isomorphism) single-element deletion of the rank-$3$ ternary affine geometry. The second main result of this paper is Theorem~\ref{them: 3con}.

\begin{figure}
\centering
\begin{subfigure}{0.3\textwidth}
\centering
\begin{tikzpicture}[scale=1.5]
\begin{scope}[every node/.style=element]

\node (x1) at (-30:1) {};
\node (x2) at (90:1) {};
\node (x3) at (-150:1) {};
\coordinate (c) at (0,0) {};
\node (y1) at ($(x1)!0.5!(x2)$) {};
\node (y2) at ($(x2)!0.5!(x3)$) {};
\node (y3) at ($(x3)!0.5!(x1)$) {};
\node at ($(x1)!0.5!(y1)$) {};
\draw (x1) to (x2) to (x3) to (x1);
\draw (0,0) circle (0.5);
		
\end{scope}
\end{tikzpicture}
\subcaption{$O_7$}
\end{subfigure}
\begin{subfigure} {0.3\textwidth}
\centering
\begin{tikzpicture}[scale=1.5]
\begin{scope}[every node/.style=element, shift={(3,0)}]
\node (x1) at (-30:1) {};
\node (x2) at (90:1) {};
\node (x3) at (-150:1) {};
\coordinate (c) at (0,0) {};
\node (y1) at ($(x1)!0.5!(x2)$) {};
\node (y2) at ($(x2)!0.5!(x3)$) {};
\node (y3) at ($(x3)!0.5!(x1)$) {};
\node at ($(x1)!0.5!(y1)$) {};
\draw (x1) to (x2) to (x3) to (x1);
\end{scope}
\end{tikzpicture}
\subcaption{$O_7^-$}
\end{subfigure}
\caption{The matroids $O_7$ and $O_7^-$.}
\label{L7 and L7'}
\end{figure}
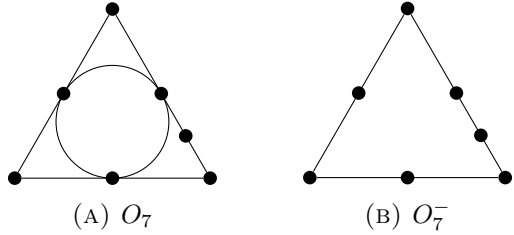

\begin{theorem}
\label{them: 3con}
A $3$-connected matroid $M$ is a minor of a spike if and only if neither $M$ nor $M^*$ has a minor isomorphic to one of $U_{2,6}$,\, $O_7$,\, $O_7^{-}$,\, $P_7^-$,\, $P_7^=$,\, $AG(2, 3)\backslash e$,\, $P_8$,\, $M(\cW_4)$, and $\cW^4$.
\end{theorem}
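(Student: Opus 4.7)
The plan is to deduce Theorem~\ref{them: 3con} from Theorem~\ref{main1} by specialising the excluded-minor characterisation of $\cS$ to 3-connected matroids. Theorem~\ref{main1} exhibits eleven excluded minors (tested up to duality), and for 3-connected $M$ I want to show these obstructions may equivalently be replaced by the nine matroids listed in Theorem~\ref{them: 3con}.

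For the necessity direction, I would verify that each of the nine matroids in Theorem~\ref{them: 3con} lies outside $\cS$. The matroids $U_{2,6}$, $P_7^-$, $P_7^=$, and $P_8$ already appear in Theorem~\ref{main1} and so are immediately excluded. For each of $O_7$, $O_7^-$, $AG(2,3)\backslash e$, $M(\cW_4)$, and $\cW^4$ I would exhibit an explicit minor (of the matroid or of its dual) isomorphic to one of the eleven matroids in Theorem~\ref{main1}. This is a finite geometric check using the circuits and hyperplanes visible in Figures~\ref{fig: P7 relaxations}--\ref{L7 and L7'}; for instance, each of $O_7$, $O_7^-$, and $AG(2,3)\backslash e$ should contract or delete to $P_7^-$, $P_7^=$, or a rank-2 obstruction from Theorem~\ref{main1}(ii), while $M(\cW_4)$ and $\cW^4$ should be traced to $P_7^-$, $P_7^=$, or a suitable rank-2 minor via a single element removal together with its dual.

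For the sufficiency direction, suppose $M$ is 3-connected and neither $M$ nor $M^*$ has a minor in the Theorem~\ref{them: 3con} list. To conclude $M \in \cS$ it suffices, by Theorem~\ref{main1}, to show that $M$ and $M^*$ avoid each obstruction in (i)--(iv). Parts (iii) and (iv) are immediate since $P_7^-$, $P_7^=$, and $P_8$ lie in the Theorem~\ref{them: 3con} list. The work is to rule out the eight low-rank obstructions in parts (i) and (ii). For each such $N$ I would prove a \emph{lifting lemma}: whenever a 3-connected matroid has $N$ or $N^*$ as a minor, it has as a minor one of the nine matroids in Theorem~\ref{them: 3con} (or its dual). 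The standard route is to pass to a minor-minimal 3-connected ``witness'' $M'$ of $M$ that still contains $N$, and then use Seymour's Splitter Theorem and Bixby's Lemma to pin $M'$ down to a short list of small 3-connected matroids; each of these should appear in, or contain as a minor, an entry of the Theorem~\ref{them: 3con} list.

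The main obstacle will be these lifting lemmas for Theorem~\ref{main1}(i)--(ii), especially the multi-parallel obstructions $\{2,2,2\}$-$U_{2,4}$ and $\{2,2\}$-$U_{2,5}$, whose minor-minimal 3-connected parents can be several distinct matroids, including extensions of $U_{2,n}$ and low-rank extensions of $M(\cW_4)$ or $\cW^4$; branching subcases are unavoidable. Duality pairs up much of the work ($N$ and $N^*$ lift to $L$ and $L^*$), cutting the analysis roughly in half. I would organise the argument by increasing element count on $N$ and exploit the classification of small 3-connected matroids to verify that no lifting possibility escapes the Theorem~\ref{them: 3con} list.
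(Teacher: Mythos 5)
Your overall logical framework is sound: reduce to Theorem~\ref{main1}, check necessity directly, and for sufficiency prove that every 3-connected matroid carrying one of the low-rank obstructions from Theorem~\ref{main1}(i)--(ii) must also carry a minor from the nine-matroid list. The gap is in how you propose to prove those ``lifting lemmas.'' You invoke the Splitter Theorem and Bixby's Lemma after passing to a minor-minimal 3-connected $M'$ of $M$ that still has the fixed obstruction $N$ as a minor. But every $N$ in Theorem~\ref{main1}(i)--(ii) is decidedly \emph{not} 3-connected --- each has a loop, a nontrivial parallel class, or is a nontrivial direct sum --- so the Splitter Theorem gives you no chain of 3-connected intermediaries between $M'$ and $N$, and no bound on $|E(M')|$. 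Moreover, ``minor-minimal 3-connected with $N$ still a minor'' is the wrong minimality notion for your goal: such an $M'$ need not itself be among the nine matroids, since a proper 3-connected minor of $M'$ could lose $N$ yet still fail to lie in $\cS$ via some other obstruction. So you have no handle tying the structure of $M'$ to the target list.

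The paper uses a different, and in this setting more effective, decomposition. It defines a 3-connected matroid to be \emph{minimally not in $\cS_3$} when it is outside $\cS$ but every proper 3-connected minor lies in $\cS$, and then characterizes exactly these matroids (Lemmas~\ref{most3} and~\ref{least4}), obtaining precisely the nine-matroid list (up to duality). That minimality condition is what lets Tutte's Wheels-and-Whirls Theorem engage cleanly in the rank-and-corank-$\geq 4$ case: if both $E_c$ and $E_d$ are empty, $M$ is a wheel or a whirl, and otherwise the presence of a removable element plus the spike structure of all smaller 3-connected minors pins $M$ down. If you want to rescue your lifting lemmas, the fix is to change the minimality condition: given a 3-connected $M$ with an obstruction minor, take a minor-minimal 3-connected minor $M'$ of $M$ that is \emph{not in $\cS$} (rather than one retaining the specific $N$), and then you are forced to characterize minimally-not-in-$\cS_3$ matroids anyway --- at which point you have arrived at the paper's proof.
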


%
%

As the self-dual matroid $M(\cW_4)$ is the unique binary matroid in the list of excluded minors in Theorem~\ref{them: 3con}, we immediately get the following result of Oxley~\cite{ox87} as a corollary.

\begin{corollary}
A $3$-connected binary matroid $M$ is a minor of a spike if and only if $M$ has no minor isomorphic to $M(\cW_4)$.
\label{binary}
\end{corollary}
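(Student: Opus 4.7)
The plan is to apply Theorem~\ref{them: 3con} directly, exploiting the fact that the class of binary matroids is closed under both taking minors and taking duals. If $M$ is a 3-connected binary matroid, then $M^*$ is also binary, and every minor of $M$ or of $M^*$ is binary as well. Consequently, neither $M$ nor $M^*$ can have any non-binary matroid in the excluded-minor list of Theorem~\ref{them: 3con} as a minor, so in the binary case the condition of Theorem~\ref{them: 3con} collapses to ``$M$ and $M^*$ avoid the binary members of the list''.

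The substantive step is therefore to verify that among the nine matroids appearing in Theorem~\ref{them: 3con}, exactly one, namely $M(\cW_4)$, is binary. I would check that each of $U_{2,6}$, $O_7$, $O_7^-$, $P_7^-$, $P_7^=$, $AG(2,3)\backslash e$, $P_8$, and $\cW^4$ contains $U_{2,4}$ as a minor, and hence is non-binary by Tutte's excluded-minor theorem for binary matroids. These verifications are routine from the geometric representations: for instance $U_{2,6}$ and $AG(2,3)\backslash e$ visibly contain $U_{2,4}$; $O_7$ contains a 4-point line formed by a triangle of the rank-3 wheel together with the freely added element; $\cW^4$ is a circuit-hyperplane relaxation of the binary matroid $M(\cW_4)$ and is therefore non-binary; and the remaining cases ($O_7^-$, $P_7^-$, $P_7^=$, $P_8$) are well known to be non-binary.

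Having confirmed that $M(\cW_4)$ is the unique binary excluded minor, and noting that it is self-dual (being the cycle matroid of the self-dual graph $\cW_4$), the statement ``$M^*$ has an $M(\cW_4)$-minor'' is equivalent to ``$M$ has an $M(\cW_4)$-minor''. Thus Theorem~\ref{them: 3con} immediately yields the corollary. The only part of the argument requiring any real work is the case-by-case non-binariness check, which is entirely standard.
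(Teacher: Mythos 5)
Your argument is correct and is exactly the one the paper uses: identify $M(\cW_4)$ as the unique binary matroid in the excluded-minor list of Theorem~\ref{them: 3con}, note that it is self-dual and that binariness is preserved under minors and duality, and conclude. Nothing to add.
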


Similarly, we immediately establish the ternary analogue of Corollary~\ref{binary}.

\begin{corollary}
A $3$-connected ternary matroid $M$ is a minor of a spike if and only if $M$ has no minor isomorphic to one of $M(\cW_4)$,\, $\cW^4$,\, $O_7$,\, $O_7^*$,\, $AG(2,3)\backslash e$,\, $(AG(2,3)\backslash e)^*$, or $P_8$.
\end{corollary}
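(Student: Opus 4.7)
The plan is to deduce the corollary directly from Theorem~\ref{them: 3con}. Because every minor of a ternary matroid is ternary, if $M$ is a $3$-connected ternary matroid then no non-ternary matroid can appear as a minor of $M$ or of $M^*$. Consequently, among the nine excluded minors listed in Theorem~\ref{them: 3con}, only those representable over $GF(3)$ can obstruct $M$ from being a minor of a spike, and the corollary amounts to the assertion that these ternary excluded minors, closed under duality, are precisely the seven matroids in the statement.

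The proof reduces to classifying each matroid appearing in Theorem~\ref{them: 3con}, together with its dual, as ternary or not. On the ternary side: $M(\cW_4)$ is binary (hence ternary) and self-dual; the whirl $\cW^4$ is ternary and self-dual; $P_8$ is the well-known self-dual ternary matroid; and both $O_7$ and $AG(2,3)\backslash e$ are ternary by construction. Closing under duality yields exactly the seven matroids listed in the corollary. On the non-ternary side: $U_{2,6}$ is not ternary because $U_{2,n}$ is $GF(3)$-representable precisely when $n \le 4$; and each of $O_7^-$, $P_7^-$, $P_7^=$ is obtained by relaxing a circuit-hyperplane of a ternary matroid (of $\cW^3$ and $P_7$ respectively), and I claim each relaxation destroys ternary representability.

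The only non-routine step is verifying that $O_7^-$, $P_7^-$, and $P_7^=$ fail to be ternary. For this I would use the fact that $\cW^3$ and $P_7$ each admit a unique ternary representation up to projective equivalence, and check that in every such representation the three points comprising the relaxed circuit-hyperplane are forced by the remaining dependencies to be collinear in $PG(2,3)$; hence no placement of the points realises the relaxation over $GF(3)$. By duality, the same conclusions apply to $(O_7^-)^*$, $(P_7^-)^*$, and $(P_7^=)^*$. These are standard exercises in matroid representation theory, dispatchable by a short case analysis on the allowed columns in a $3 \times 7$ matrix over $GF(3)$, or cited from Oxley's textbook. Once these classifications are in hand, the corollary is immediate from Theorem~\ref{them: 3con}: the list of excluded minors collapses, under the hypothesis that $M$ is ternary, to exactly the seven matroids in the statement.
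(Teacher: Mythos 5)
Your approach matches the paper's: Corollary~1.4 is stated there without a written proof precisely because it follows from Theorem~\ref{them: 3con} by discarding the non-ternary entries in the list and then closing under duality (the dual-closed hypothesis ``neither $M$ nor $M^*$ has a minor\ldots'' converts to ``$M$ has no minor\ldots'' once the duals $O_7^*$ and $(AG(2,3)\backslash e)^*$ are added explicitly). Your classification of the nine listed matroids into ternary and non-ternary is correct and leads to exactly the stated seven.

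Two small slips in the write-up are worth fixing, though neither affects the conclusion. First, ``$M(\cW_4)$ is binary (hence ternary)'' is a non sequitur: binary does not imply ternary ($F_7$ is the standard counterexample). The correct justification is that $M(\cW_4)$ is graphic, hence regular, hence representable over $GF(3)$. Second, $O_7^-$ is a circuit-hyperplane relaxation of $O_7$, not of $\cW^3$; the parenthetical should read ``of $O_7$ and $P_7$ respectively.'' If you want to carry out the verification that $O_7^-$, $P_7^-$, $P_7^=$ are non-ternary via uniqueness of $GF(3)$-representations, the relevant $3$-connected matroids whose representations you should enumerate are therefore $O_7$ and $P_7$ (both $3$-connected ternary, so uniquely $GF(3)$-representable), not $\cW^3$; alternatively, one can observe that any simple rank-$3$ ternary matroid with no $U_{2,4}$-restriction is a restriction of $AG(2,3)$ (as the paper itself uses in the proof of Lemma~\ref{most3}), and check directly that $P_7^-$ and $P_7^=$ are not restrictions of $AG(2,3)$.
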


The paper is organised as follows. Notation and terminology follows Oxley~\cite{ox11}, with the exception that our definition of the ``leg'' of a spike does not include the tip. In the next section, we detail some lemmas that are used throughout the paper. The proof of Theorem~\ref{main1} is given in Section~\ref{proof1}, while the proof of Theorem~\ref{them: 3con} is given in Section~\ref{proof2}.

\section{Preliminaries}

In this section, we state some lemmas that will be used in the paper. The first two lemmas characterise spikes in terms of circuits (see, for example,~\cite{ox11}). To ease reading, a {\em $4$-circuit} and a {\em $4$-cocircuit} is a $4$-element circuit and a $4$-element cocircuit, respectively.

\begin{lemma}
Let $r\ge 3$, and let $M$ be a matroid with ground set $\{t, x_1, y_1, x_2, y_2, \ldots, x_r, y_r\}$. Then $M$ is a rank-$r$ tipped spike with legs $L_1=\{x_1, y_1\}, L_2=\{x_2, y_2\}, \ldots, L_r=\{x_r, y_r\}$ if and only if the set of non-spanning circuits of $M$ is $\cC_1 \cup \cC_2 \cup \cC_3$, where
\begin{enumerate}[{\rm (i)}]
\item $\cC_1 = \{L_i\cup \{t\}: 1\le i\le r\}$

\item $\cC_2 = \{L_i\cup L_j: 1\le i < j\le r\}$

\item $\cC_3$ is a, possibly empty, subset of
$$\big\{\{z_1, z_2, \ldots, z_r\}: \mbox{$z_i\in \{x_i, y_i\}$ for all $i\in \{1, 2, \ldots, r\}$}\big\},$$
such that no two members of $\cC_3$ have more than $r-2$ elements in common.
\end{enumerate}
\label{circuits}
\end{lemma}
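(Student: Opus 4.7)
The plan is to prove the two implications of the characterization separately.

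The backward direction is the more direct. Given a matroid whose non-spanning circuits are $\cC_1\cup\cC_2\cup\cC_3$ as described, property (i) is immediate. For property (ii), I would compute $r(\bigcup_{i\in J}L_i)$ from below by exhibiting the explicit independent set $L_{i_0}\cup\{x_i:i\in J\setminus\{i_0\}\}$ of size $|J|+1$ (which meets none of the listed circuit types, as it avoids $t$, contains only one full leg, and is not a leg-transversal), and from above by a pigeonhole count showing that any $(|J|+2)$-subset of $\bigcup_{i\in J}L_i$ must contain two full legs and therefore a $4$-circuit from $\cC_2$.

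For the forward direction I assume (i) and (ii). That $\cC_1$ consists of non-spanning circuits is (i). For $\cC_2$, (ii) gives $r(L_i\cup L_j)=3$, so the set is dependent; I would rule out any $3$-element sub-circuit by the closure chain $x_j\in\cl(L_i)\Rightarrow L_j\subseteq\cl(\{t,x_j\})\subseteq\cl(L_i)$, which would force $r(L_i\cup L_j)\le 2$.

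The main work is to show every other non-spanning circuit $C$ is a transversal $\{z_1,\ldots,z_r\}$ of all $r$ legs. I would argue in three steps. First, $t\notin C$: if $t\in C$ and $C\notin\cC_1$, then $C$ contains no full leg (else $C=L_i\cup\{t\}\in\cC_1$ by minimality), so $C\setminus\{t\}$ is a transversal of $k=|C|-1$ legs; writing $C=\{t,z_{i_1},\ldots,z_{i_k}\}$, circuit-ness forces $z_{i_1}\in\cl(\{t\}\cup\bigcup_{j\ge 2}L_{i_j})$, and propagation through the triangle $\{t,x_{i_1},y_{i_1}\}$ puts $L_{i_1}$ into that closure as well, placing the rank-$(k+1)$ set $\bigcup_{j\in J}L_{i_j}$ inside a rank-$k$ flat and contradicting (ii). Second, $|C\cap L_i|\le 1$: if $L_i\subseteq C$, strong circuit elimination between $C$ and the triangle $\{t,x_i,y_i\}$ yields a circuit through $t$, which by the first step is in $\cC_1$, producing another full leg $L_j\subseteq C$; then the $4$-circuit $L_i\cup L_j\in\cC_2$ is a proper subcircuit of $C$, contradicting minimality. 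Third, $|C|=r$: if instead $|C|=k<r$, the same closure-propagation argument applied to $C\setminus\{z_{i_1}\}\subset\bigcup_{j\ge 2}L_{i_j}$ shows $z_{i_1}\notin\cl(C\setminus\{z_{i_1}\})$, contradicting that $C$ is a circuit.

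The intersection bound on $\cC_3$ follows similarly: two transversal circuits $C,C'$ sharing $r-1$ elements would, via strong circuit elimination on a common element, yield a circuit inside a set of $r$ elements containing exactly one full leg and otherwise distinct-leg entries; the three-step analysis above rules out every non-spanning circuit from sitting inside such a set, and a spanning circuit would need $r+1$ elements, so the set is in fact independent -- a contradiction. The main obstacle throughout is the rank bookkeeping in the first step, where the closure of an independent set with only partial-leg intersections must be shown to avoid $t$, with the rank information coming out of (ii) only after propagating through the triangles supplied by (i).
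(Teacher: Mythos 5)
The paper does not actually prove Lemma~\ref{circuits}; it is stated with a pointer to Oxley's book, so there is no proof in the paper to compare against. Evaluated on its own merits, your reconstruction is sound and follows the natural route: for the backward implication you exhibit a transversal-with-one-full-leg as a lower-bound independent set (valid since $J$ is proper, so the set can contain no element of $\cC_1\cup\cC_2\cup\cC_3$), and use the pigeonhole observation that any $(|J|+2)$-subset of $\bigcup_{i\in J}L_i$ contains two full legs; for the forward implication you use the closure chain $t\in\cl(L_i)$, $L_i\subseteq\cl(\{t,z_i\})$ to propagate from a single leg-representative to the whole leg, combined with strong circuit elimination to rule out circuits through $t$ or containing a full leg, and the same closure bookkeeping to rule out short partial transversals and close leg-transversals. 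These are exactly the manipulations one would expect.

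Two small points worth tightening. First, in the backward direction you establish properties (i) and (ii) of the definition, but the definition also requires $r(M)=r$; since the hypothesis only constrains the \emph{non-spanning} circuits, one should say a word about why the rank cannot exceed $r$ (for instance via the submodularity chain $r(L_1\cup\cdots\cup L_k)\leq k+1$ together with $t\in\cl(L_1)$, plus a short argument pinning the rank down to $r$ rather than $r+1$). Second, your appeals to strong circuit elimination silently assume the circuit produced is non-spanning so that the already-established classification applies; this is fine because the produced circuit sits inside a set of size at most $r$, but it deserves an explicit remark, since for $r=3$ the sets in $\cC_2$ and the transversals already have rank $r$ and the bookkeeping of ``spanning'' versus ``non-spanning'' becomes delicate.
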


\noindent The circuits given in (iii) of Lemma~\ref{circuits} are called {\em traversals}. A spike with no traversals is said to be {\em free}.

\begin{lemma}
Let $r\ge 3$ and let $M$ be a matroid on $2r$ elements. Then $M$ is a rank-$r$ spike if and only if the ground set of $M$ can be partitioned into pairs $\{L_1, L_2, \ldots, L_r\}$ such that, for all distinct $i, j\in \{1, 2, \ldots, r\}$, the union of $L_i$ and $L_j$ is a $4$-circuit and a $4$-cocircuit.
\label{pairs}
\end{lemma}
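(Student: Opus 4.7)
Since $|E(M)|=2r$, $M$ cannot be a tipped spike (which has $2r+1$ elements), so $M$ is tipless; that is, $M=M'\setminus t$ for a tipped spike $M'$ with tip $t$ and legs $L_1,\ldots,L_r$. By Lemma~\ref{circuits}(ii), each $L_i\cup L_j$ is a circuit of $M'$ avoiding $t$, hence a $4$-circuit of $M$. For the $4$-cocircuit claim, let $A=\bigcup_{k\notin\{i,j\}}L_k$; Lemma~\ref{circuits}(ii) applied to $J=\{1,\ldots,r\}\setminus\{i,j\}$ (non-empty and proper since $r\ge 3$) gives $r_M(A)=r-1$. Suppose for contradiction that $x_i\in\cl_M(A)\subseteq\cl_{M'}(A)$. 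Since $t\in\cl_{M'}(L_k)\subseteq\cl_{M'}(A)$ for any $k\notin\{i,j\}$, and $\{t,x_i,y_i\}$ is a circuit of $M'$, we also have $y_i\in\cl_{M'}(\{t,x_i\})\subseteq\cl_{M'}(A)$; then $\bigcup_{k\ne j}L_k\subseteq\cl_{M'}(A)$ would have rank at most $r-1$, contradicting Lemma~\ref{circuits}(ii) applied to $J=\{1,\ldots,r\}\setminus\{j\}$. Symmetric arguments exclude $y_i,x_j,y_j$ from $\cl_M(A)$, so $A$ is a rank-$(r-1)$ flat and $L_i\cup L_j$ is a $4$-cocircuit.

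\textbf{Reverse direction.} Assume the partition exists. First I would prove, by induction on $|J|$, that $r_M(\bigcup_{i\in J}L_i)=|J|+1$ for every non-empty $J\subsetneq\{1,\ldots,r\}$, and that this union is a flat whenever $|J|\le r-2$. The upper bound uses submodularity against a $4$-circuit $L_i\cup L_{j'}$. The lower bound (and the flatness) uses circuit--cocircuit orthogonality: if $x_i\in\cl_M(\bigcup_{k\in J'}L_k)$ for some $J'\subseteq\{1,\ldots,r\}\setminus\{i\}$ with $J'\cup\{i\}\ne\{1,\ldots,r\}$, then choosing $m\notin J'\cup\{i\}$, any witnessing circuit meets the $4$-cocircuit $L_i\cup L_m$ in exactly $\{x_i\}$, a contradiction. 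Combining the case $|J|=r-2$ with the hypothesis that $\bigcup_{k\ne i,j}L_k$ is a hyperplane yields $r(M)=r$.

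Next I would show that every flat $F$ of $M$ containing some $L_i$ is a union of legs: if $z\in F\cap L_l$ with $L_l\not\subseteq F$, the other element $w\in L_l\setminus F$ would belong to $\cl_M(L_i\cup\{z\})\subseteq F$ by the $4$-circuit $L_i\cup L_l$, a contradiction. Consequently, the modular cut $\mathcal{M}$ generated by $L_1,\ldots,L_r$ is exactly $\{\bigcup_{j\in K}L_j:1\le|K|\le r-2\}\cup\{E\}$, and the modular-pair axiom holds by direct arithmetic using the $|J|+1$ rank formula (leg-unions with disjoint index sets fail modularity, while those with overlapping indices intersect in another leg-union still in $\mathcal{M}$). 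Let $M'=M+_{\mathcal{M}}t$ be the resulting principal extension. Then $t\in\cl_{M'}(L_i)$ for each $i$, making $\{t,x_i,y_i\}$ a triangle, and for every non-empty proper $J\subseteq\{1,\ldots,r\}$ we have $r_{M'}(\bigcup_{i\in J}L_i)=r_M(\bigcup_{i\in J}L_i)=|J|+1$. These are exactly conditions (i) and (ii) of the tipped spike definition, so $M'$ is a rank-$r$ tipped spike and $M=M'\setminus t$ is a rank-$r$ tipless spike.

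\textbf{Main obstacle.} The principal difficulty lies in the reverse direction, specifically in establishing the rank formula and flatness of leg-unions via circuit--cocircuit orthogonality. Once that foundation is in place, the structural observation that flats containing a leg are in fact leg-unions, the description of $\mathcal{M}$, the modular-cut verification, and the final identification of $M'$ as a tipped spike all follow by routine calculation from the definition.
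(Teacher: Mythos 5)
The paper states this lemma without proof, deferring to Oxley's textbook, so there is no in-paper argument to compare against. Your forward direction is complete and correct: tiplessness is forced by the element count, the circuit claim is immediate from Lemma~\ref{circuits}(ii), and the hyperplane argument for $A=\bigcup_{k\notin\{i,j\}}L_k$ (using that $t\in\cl_{M'}(L_k)$ to pull $y_i$ into $\cl_{M'}(A)$ once $x_i$ is there) is exactly right.

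Your reverse direction is a sound plan rather than a finished proof. The ingredients you list are the right ones: submodularity gives the upper bound $r(\bigcup_{i\in J}L_i)\le|J|+1$; circuit--cocircuit orthogonality against $L_i\cup L_m$ gives the lower bound and flatness for $|J|\le r-2$; the cocircuit hypothesis pins down $r(M)=r$; the observation that any flat containing a leg is a union of legs identifies the filter generated by $L_1,\dots,L_r$; the modular-pair check (disjoint index sets fail, overlapping index sets intersect in a smaller leg-union) shows this filter is already a modular cut; and the principal extension $M+_{\mathcal M}t$ then visibly satisfies conditions (i) and (ii) of the tipped-spike definition. I checked these steps and found no gaps, including the easily overlooked preliminary facts that the $4$-circuit hypothesis forces $M$ to be simple (so $\mathcal M$ is a proper modular cut and $\{t,x_i,y_i\}$ is a genuine triangle rather than a degenerate dependent set). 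The modular-cut route is somewhat heavier machinery than a direct verification of Lemma~\ref{circuits}'s circuit description for the extension, but it is clean and avoids casework; either path is a legitimate way to supply the proof the paper omits. The main thing to do to turn this into a proof is simply to write out the inductive rank argument and the modular-pair arithmetic that you currently only gesture at.
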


The next lemma characterises tipped spike minors as restrictions of particular matroids, and is used frequently in the proof of Theorem~\ref{main1}. This lemma is separated into two cases. The first of these is concerned with tipped spike minors in which the tip has been contracted, and the second with tipped spike minors in which the tip has not been contracted. Its straightforward proof is omitted.

\begin{lemma}
\label{lemma: spike degeneracy}
Let $r\ge 3$, and let $M$ be a rank-$r$ matroid. Then $M$ is a minor of a spike if and only if, for some positive integer $n$, the matroid $M$ is isomorphic to a restriction of either
\begin{enumerate}[{\rm (i)}]
\item $2U_{r, r+1}\oplus U_{0, n}$, or

\item a rank-$r$ tipped spike in which the tip has been replaced by a parallel class of size $n$.
\end{enumerate}
\end{lemma}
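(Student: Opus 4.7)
The plan is to prove both directions constructively. For sufficiency, I would exhibit each of the two universal matroids as a minor of a specific free tipped spike; for necessity, I would case-split on whether the tip of the ambient spike becomes a loop under the contraction.

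For $(\Leftarrow)$, to realize the matroid of (ii), I would start with a free rank-$(r+n-1)$ tipped spike and contract one element from each of $n-1$ designated legs. Each such contraction collapses the triangle $\{t, x_j, y_j\}$, produces an element parallel to $t$, and reduces the rank by one; after $n-1$ steps the result is a rank-$r$ tipped spike whose tip has been replaced by a parallel class of size $n$. To realize (i), I would start with a free rank-$(r+n+1)$ tipped spike, contract the tip $t$ to reach $2U_{r+n,\,r+n+1}$ (justified in the next paragraph), and then contract one representative from each of $n$ of the resulting parallel pairs; the partner of each contracted element becomes a loop and $2U_{r,r+1} \oplus U_{0,n}$ remains. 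Since restrictions of minors are minors, this will cover restrictions too.

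For $(\Rightarrow)$, I will write $M = N/C \ba D$ and, extending $N$ by the tip if $N$ is tipless, assume $N$ is a tipped spike with tip $t$, rank $s$, and legs $L_1, \ldots, L_s$. If $t \notin \cl_N(C)$, then $t \notin C$ and $|C \cap L_i| \le 1$ for each $i$; the legs $L_i$ with $C \cap L_i = \emptyset$ survive as legs of $N/C$, and each $L_i$ with $|C \cap L_i| = 1$ contributes a single element that becomes parallel to $t$. I will verify, using Lemma~\ref{circuits}, that $N/C$ is a tipped spike with tip replaced by a parallel class of size $n = 1 + \#\{i : C \cap L_i \ne \emptyset\}$, so $M$ is its restriction by $D$. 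Otherwise $t \in \cl_N(C)$, and I may assume $t \in C$ (if instead $L_i \subseteq C$ for some $i$, absorbing $t$ into $C$ costs at most one extra loop). The key computation will be $N/t \cong 2U_{s-1,\,s}$: each triangle $\{t, x_i, y_i\}$ forces $\{x_i, y_i\}$ to be a parallel pair in $N/t$, and every transversal $\{z_1, \ldots, z_s\}$ of the legs satisfies $r_N(\{t, z_1, \ldots, z_s\}) = s$ (whether or not it lies in $\cC_3$), so it becomes a circuit of size $s$ in $N/t$. Given $N/t \cong 2U_{s-1,\,s}$, contracting $C \setminus \{t\}$ only collapses some parallel pairs into loops (when half-contracted) or eliminates them (when fully contracted); the intact pairs form $2U_{r, r+1}$, and together with the loops and the deletion by $D$, this realises $M$ as a restriction of $2U_{r, r+1} \oplus U_{0, n}$ for a suitable $n \ge 1$.

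The hardest part will be establishing $N/t \cong 2U_{s-1,\,s}$ uniformly across all tipped spikes, whether free or not; what makes this delicate is that traversals in $\cC_3$ must not yield extra circuits in $N/t$ beyond those already forced by the triangles through $t$. Once this uniformity is in place, Case (i) reduces to routine bookkeeping inside a uniform-matroid minor, and Case (ii) reduces to a mechanical verification via Lemma~\ref{circuits}.
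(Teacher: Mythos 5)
The paper declares this lemma's proof ``straightforward'' and omits it, so there is no official proof to compare against; I evaluate the proposal on its own. The overall strategy is sound, and the central insight you flag as the ``hardest part'' --- that $N/t \cong 2U_{s-1,s}$ for \emph{every} rank-$s$ tipped spike $N$, free or not --- is correct and well-identified: a transversal $\{z_1,\ldots,z_s\}$ together with $t$ contains no triangle, no $L_i \cup L_j$, and no traversal (a traversal omits $t$), so $\{t,z_1,\ldots,z_{s-1}\}$ is independent of rank $s$ in $N$, forcing every transversal to become a circuit of $N/t$ and hence $\si(N/t)\cong U_{s-1,s}$.

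However, there are two genuine gaps. First, in the $(\Leftarrow)$ direction for (ii) you explicitly start from a \emph{free} rank-$(r+n-1)$ tipped spike, and the contraction you describe always produces a free rank-$r$ tipped spike with a size-$n$ parallel class at the tip. But case (ii) allows an \emph{arbitrary} rank-$r$ tipped spike at the base, possibly with traversals, and such a matroid is not a restriction of any free one since it has extra non-spanning circuits. To realise a non-free target $N''$ with traversal set $\cC_3$, you must instead take a rank-$(r+n-1)$ tipped spike whose traversals are $\{T\cup\{x_{r+1},\ldots,x_{r+n-1}\}: T\in\cC_3\}$ (pairwise intersections have size at most $(r-2)+(n-1) = (r+n-1)-2$, so this is a legal traversal family by Lemma~\ref{circuits}), and then check that contracting $x_{r+1},\ldots,x_{r+n-1}$ recovers exactly the traversals $\cC_3$.

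Second, in the $(\Rightarrow)$ direction you never control $r(N/C)$. If $D$ is codependent in $N/C$ then $r(M) = r < r(N/C)$, so the ambient matroid you identify in case (ii) has rank $s-|C| > r$ rather than rank $r$, and similarly in case (i) the uniform piece you land in may have rank larger than $r$. The standard fix is to observe at the outset that one may take $C$ independent in $N$ and $D$ coindependent in $N/C$, forcing $r(N/C)=r(M)=r$ throughout; with that normalisation your bookkeeping closes cleanly, and your observation that absorbing $t$ into $C$ (when $t\in\cl_N(C)-C$) costs at most one loop is also consistent with it, since adding an element of $\cl_N(C)$ to $C$ does not change $r(N/C)$.
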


\noindent We refer to the matroid in Lemma~\ref{lemma: spike degeneracy}(ii) as an {\em overloaded tipped spike}. If $M$ is a restriction of a spike and $L$ is a leg of that spike, we shall say that the set $L'$ of elements of $L$ which are still elements of $M$ is a leg of $M$. If $|L'| = 2$, then we say that $L'$ is a {\em full} leg, and if $|L'| = 1$, we say that $L'$ is a {\em half} leg.

Let $e$ be an element in a half leg of a spike minor $M$. The union of $e$ and each full leg of $M$ is a triad. Thus, $M^*$ is a spike minor with $e$ at the tip. We will sometimes refer to $e$ is a {\em cotip} of $M$. In particular, if $M$ is a tipped spike minor with a single non-tip element deleted, so that $M$ has exactly one half leg, we say that $M$ is a {\em spike with tip and cotip}. The following characterisation of spikes with tip and cotip is a straightforward consequence of Lemma~\ref{circuits}.

\begin{lemma}
A matroid $M$ is a spike with tip and cotip if and only if there exist distinct $t, t^* \in E(M)$ such that $E(M) - \{t,t^*\}$ can be partitioned into pairs $\{x_1,y_1\},\{x_2,y_2\},\ldots,\{x_r,y_r\}$, where $\{x_i,y_i,t\}$ is a triangle and $\{x_i,y_i,t^*\}$ is a triad for all $i \in \{1,2,\ldots,r\}$.
\label{lem: tip_and_cotip}
\end{lemma}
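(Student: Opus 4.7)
The forward direction is immediate from the definition of a spike with tip and cotip: if $M=N\setminus e$ for a rank-$(r+1)$ tipped spike $N$ with tip $t$ and a leg $\{t^*,e\}$, then Lemma~\ref{circuits} applied to $N$ furnishes the triangles $\{t,x_i,y_i\}$ for each remaining leg $\{x_i,y_i\}$, while the required triads $\{t^*,x_i,y_i\}$ of $M$ are precisely those flagged in the paragraph preceding the statement, obtained from the $4$-cocircuits $L_i\cup\{t^*,e\}$ of $N$ upon deleting $e$.

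For the converse, my plan is to exhibit $M$ as $N\setminus y$ for a rank-$(r+1)$ tipped spike $N$ built from $M$ by a single-element extension. First I would argue that $\{t,t^*\}$ is neither a circuit nor a cocircuit of $M$: by orthogonality, if it were a circuit it would meet the triad $\{t^*,x_1,y_1\}$ in only $\{t^*\}$, and if it were a cocircuit it would meet the triangle $\{t,x_1,y_1\}$ in only $\{t\}$. Hence $\cl_M(\{t,t^*\})$ is a rank-$2$ flat, and letting $N$ be the principal extension of $M$ along this flat by a new element $y$ yields a triangle $\{t,t^*,y\}$ of $N$ with $M=N\setminus y$. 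It then remains to verify, from the definition, that $N$ is a rank-$(r+1)$ tipped spike with tip $t$ and legs $L_1=\{x_1,y_1\},\ldots,L_r=\{x_r,y_r\},L_{r+1}=\{t^*,y\}$: the triangle condition holds by hypothesis and by construction, and the rank condition $r_N(\bigcup_{i\in J}L_i)=|J|+1$ can be proved by induction on $|J|$.

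The main obstacle is this rank condition. The key tool is orthogonality together with the given triangles: any nontrivial dependence among $\{x_1,\ldots,x_r\}$ would yield a circuit meeting some triad $\{t^*,x_j,y_j\}$ in exactly $\{x_j\}$; and if $L_j\subseteq\cl_M(L_i)$ for some $i\ne j$, then $t\in\cl_M(L_i)\cap\cl_M(L_j)$ would place $\{x_i,y_i,x_j\}$ in a common rank-$2$ flat, giving a circuit inside $\{x_i,y_i,x_j\}$ that meets $\{t^*,x_j,y_j\}$ in only $\{x_j\}$. Both situations violate orthogonality, so $r(M)=r+1$ and the rank formula for arbitrary $J$ follow by a short induction, completing the verification.
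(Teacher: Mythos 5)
Your proof takes a genuinely different route from the paper's. The paper works entirely inside $M$: circuit elimination between $\{t,x_i,y_i\}$ and $\{t,x_j,y_j\}$, followed by orthogonality with the triads $\{t^*,x_i,y_i\}$ and $\{t^*,x_j,y_j\}$, establishes that $\{x_i,y_i,x_j,y_j\}$ is a circuit; then one shows every remaining non\nobreakdash-spanning circuit contains $t^*$ together with one element of each $\{x_i,y_i\}$, and invokes Lemma~\ref{circuits}. You instead construct the ambient spike $N$ explicitly as a principal extension of $M$ on the flat $\cl_M(\{t,t^*\})$ and then verify the spike axioms for $N$. Both are sound; your approach is more structural and makes the ``missing element'' $y$ explicit, while the paper's is more direct.

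A few places in your sketch are underspecified and deserve more than the ``short induction'' you promise. First, you only establish that $\cl_M(\{t,t^*\})$ has rank $2$, not that it equals $\{t,t^*\}$; if some $x_i$ lay in this flat, the leg $L_i$ would be contained in $\cl_N(\{t^*,y\})$ and the rank condition would fail. Ruling this out needs the cocircuit structure: for $j\neq i$ the set $E(M)-\{t^*,x_j,y_j\}$ is a hyperplane, yet it would then contain $t^*$ in its closure. Second, your two explicit observations (independence of $\{x_1,\ldots,x_r\}$; $L_j\not\subseteq\cl_M(L_i)$) give the rank condition only for $|J|\le 2$. For the inductive step you still need, via the same orthogonality technique, that $y_{i_0}\notin\cl_M(\{x_i:i\in J\})$, and separately --- for $J$ containing the new leg $\{t^*,y\}$ --- that $t^*\notin\cl_M\bigl(\bigcup_{i\in J'}L_i\bigr)$, which again rests on the hyperplanes $E(M)-\{t^*,x_{j},y_{j}\}$. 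Third, $r(M)\le r+1$ is needed (so that $r(N)=r+1$), and this too comes from a hyperplane count, not from the two observations you list. None of these is difficult, but together they constitute the bulk of the argument; once they are written out, your proof is comparable in length to the paper's, which bypasses the extension altogether by matching the circuit list of $M$ directly against Lemma~\ref{circuits}.
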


\begin{proof}
Let $i$ and $j$ be distinct elements of $\{1,2,\ldots,r\}$. First, we show that $\{x_i,y_i,x_j,y_j\}$ is a circuit. Circuit elimination between $\{t,x_i,y_i\}$ and $\{t,x_j,y_j\}$ implies that $M$ has a circuit contained in $\{x_i,y_i,x_j,y_j\}$. By orthogonality with the triads $\{t^*,x_i,y_i\}$ and $\{t^*,x_j,y_j\}$, the set $\{x_i,y_i,x_j,y_j\}$ is a circuit.

Let $C$ be a circuit of $M$ which is not of the form $\{t,x_i,y_i\}$ or $\{x_i,y_i,x_j,y_j\}$. Orthogonality with the triads implies that $C$ contains $t^*$ and an element of $\{x_i,y_i\}$ for all $i \in \{1,2,\ldots,r\}$. It follows from Lemma~\ref{circuits} that the circuits of $M$ are the circuits of a deletion of a non-tip element from a tipped spike. Thus, $M$ is a spike with tip and cotip.
\end{proof}

One of the consequences of Theorem~\ref{main1} is that $P_8$ is the only excluded minor for $\cS$ with rank and corank at least $4$. The following characterisation of $P_8$ (see, for example, \cite[p.\ 651]{ox11}) will be used to prove this.

\begin{lemma} 
Let $M$ be a matroid. Then $M \cong P_8$ if and only if $M \backslash e \cong P_7$ and $M / e \cong P_7^*$ for all $e \in E(M)$.
\label{lem: p8}
\end{lemma}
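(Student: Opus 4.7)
The plan is as follows. For the forward direction, I observe that the automorphism group of $P_8$ acts transitively on its ground set, a standard symmetry visible from the description in Figure~\ref{fig: P8}. It therefore suffices to verify $P_8 \backslash e \cong P_7$ and $P_8 / e \cong P_7^*$ for a single element $e$, which is a routine check from the circuit description of $P_8$ (cf.\ Oxley~\cite[p.\ 651]{ox11}).

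For the reverse direction, first extract the basic parameters of $M$. The hypothesis gives $|E(M)| = 8$, and combining the rank relations for deletion and contraction (using that $M$ has no loops or coloops, since $P_7$ and $P_7^*$ have none) with the ranks of $P_7$ and $P_7^*$ yields $r(M) = 4$; moreover, $M$ has no loops or parallel pairs. The hypothesis is also symmetric under duality, as $(M/e)^* = M^* \backslash e$ and $(M \backslash e)^* = M^* / e$, so $M^*$ also satisfies it.

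The main step is to reconstruct the non-spanning circuits of $M$. Since $r(M) = 4$ and $M$ has no loops or parallel pairs, every non-spanning circuit of $M$ is a triangle or a $4$-circuit, and therefore has size strictly less than $|E(M)| = 8$; hence it is contained in $E(M) \setminus \{e\}$ for some $e$ and so is a circuit of $M \backslash e$. Conversely, every circuit of $M \backslash e$ is a circuit of $M$. So the non-spanning circuits of $M$ are precisely the union, over $e \in E(M)$, of the triangles and $4$-circuits appearing in the eight single-element deletions of $M$, each of which is known up to isomorphism by the hypothesis.

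The main obstacle is to reconcile the eight local copies of $P_7$, whose distinguished tip element may correspond to different elements of $M$ in different deletions, into a unique global matroid. To overcome this, I would fix an identification of $M \backslash e_0$ with $P_7$ for some $e_0 \in E(M)$, and then use orthogonality between the circuits of $M \backslash e_0$ and the cocircuits of $M / e_0 \cong P_7^*$ to locate $e_0$ correctly within a second deletion $M \backslash e_1$. Iterating over the deletions pins down the complete list of non-spanning circuits of $M$, which is then compared directly with the standard list for $P_8$; this match, together with the rank, identifies $M$ with $P_8$.
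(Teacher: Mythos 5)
The paper gives no proof of this lemma; it simply cites Oxley~\cite[p.\ 651]{ox11}, so there is no in-paper argument to compare against and your proposal stands or falls on its own. As written it has two gaps. The first is a failure to notice that the statement as printed has deletion and contraction swapped: $P_7$ has rank $3$ while $P_8$ is $3$-connected of rank $4$, so $P_8 \backslash e$ has rank $4$ and cannot be $P_7$; the intended statement (and the one actually invoked at the end of the proof of Lemma~\ref{rank4}) is $M\backslash e \cong P_7^*$ and $M/e \cong P_7$. Your claim that the ``rank relations $\ldots$ yield $r(M)=4$'' is therefore not correct as stated: once you know $M$ has no loops or coloops, $M\backslash e \cong P_7$ forces $r(M)=3$ while $M/e\cong P_7^*$ forces $r(M)=5$. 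Had you actually carried out the computation you describe, it would have flagged the misprint.

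The second, more serious, gap is that the reverse direction is never actually argued. Each $M\backslash e$ is known only up to isomorphism, so ``the union over $e$ of the triangles and $4$-circuits appearing in the eight single-element deletions'' is not a well-defined collection of subsets of $E(M)$ until the eight isomorphisms onto $P_7^*$ are chosen compatibly, and showing that they can only be chosen compatibly is precisely what has to be proved. You correctly identify this reconciliation as the main obstacle, but your proposed resolution (``fix an identification $\ldots$ use orthogonality $\ldots$ iterating over the deletions pins down the complete list'') is a statement of intent rather than a proof. Nothing in the sketch shows why the iteration is forced, why different initial identifications cannot produce non-isomorphic candidates, or why the resulting matroid cannot be some other simple, cosimple, rank-$4$, $8$-element matroid whose single-element minors all happen to look like $P_7^*$ and $P_7$. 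For a sense of the work required, the structurally analogous reconciliation carried out inside the proof of~(\ref{cor:buddy_lemma}) in Lemma~\ref{rank4} runs to a lengthy case analysis; a self-contained proof of the present lemma would need comparable detail.
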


The last four lemmas of this section are used in the proof of Theorem~\ref{them: 3con}. The first and second are well known and easily proved, the third is due to Oxley~\cite{ox89}, and the fourth is Tutte's Triangle Lemma~\cite{tut66}.

\begin{lemma}
\label{3conspike}
Let $\Phi_r$ be a tipped spike with tip $t$ and rank~$r$, where $r\ge 4$. Then the $3$-connected rank-$r$ restrictions of $\Phi_r$ are $\Phi_r\backslash t$, $\Phi_r\backslash e$, and $\Phi_r\backslash \{t, e\}$ for all $e\in E(\Phi_r)-\{t\}$.
\end{lemma}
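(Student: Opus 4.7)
The plan is to verify that the matroids in the list are $3$-connected of rank~$r$, and then to exhibit a $2$- or $1$-separation in every other rank-$r$ restriction $N = \Phi_r \backslash X$.

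For the forward direction, $\Phi_r$ and $\Phi_r \backslash t$ are $3$-connected by standard properties of spikes. For $\Phi_r \backslash e$ with $e \in E(\Phi_r) \setminus \{t\}$, Lemma~\ref{lem: tip_and_cotip} identifies it as a spike with tip $t$ and cotip $e'$, where $e'$ is the partner of $e$ in its leg; the dual is a tipped spike of rank~$r$, hence $3$-connected, so $\Phi_r \backslash e$ is too. For $\Phi_r \backslash \{t, e\}$, we verify $3$-connectedness directly: with $r - 1$ full legs and one half leg, any candidate $2$-separation is obstructed by the $4$-circuits $L_i \cup L_j$ between pairs of remaining full legs, together with the triad structure inherited from viewing $e'$ as a cotip.

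For the converse direction, assume $X \neq \emptyset$, $X \neq \{f\}$, and $X \neq \{t, f\}$ for any non-tip $f$, with $r(N) = r$. We split on whether $X$ fully contains some leg. Suppose first that $L_i \subseteq X$ for some $i$. If another leg $L_j$ remains full in $N$, take $A = \{t\} \cup L_j$ if $t \notin X$, or $A = L_j$ if $t \in X$; then $r(A) = 2$ and $B = E(N) \setminus A$ is a subset of $\bigcup_{k \neq i, j} L_k$ (possibly together with $\{t\}$), giving $r(B) \leq r - 1$ and hence $\lambda(A) \leq 1$. If no $L_j$ remains full in $N$, the rank-$r$ hypothesis forces all other legs to be half legs with at most one fully removed leg, so $N$ degenerates to a disconnected matroid (essentially $U_{r,r}$). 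Otherwise no leg is fully contained in $X$, and $|X| \geq 2$ (or $|X \setminus \{t\}| \geq 2$ if $t \in X$) implies two distinct legs $L_i, L_j$ each contribute a single element to $X$. Taking $A = \{f_i, f_j\}$ for the surviving leg elements, $r(A) = 2$ and an analogous computation yields $r(B) \leq r - 1$, giving a $2$-separation.

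The main obstacle is computing $r(B)$ exactly in each configuration of $X$. The key observation is that, in $\Phi_r$, the $4$-circuit $L_k \cup L_l$ keeps each element of $L_k$ in the closure of the other three, and the triangle $\{t, x_k, y_k\}$ keeps $t$ in the closure of every full leg. Consequently, $\{t\}$ together with $p$ full legs and $q$ half legs has rank $p + q + 1$ whenever $p + q < r$, and $p$ full legs with $q$ half legs (without $t$) have rank $p + q + 1$ if $p \geq 1$. These identities, combined with the rank-$r$ hypothesis, pin $r(B)$ down in each subcase and complete the argument.
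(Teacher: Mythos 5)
The paper states this lemma without proof, remarking only that it is ``well known and easily proved,'' so there is no proof of record to compare against; what follows is an assessment of your argument on its own terms.

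Your forward direction contains a concrete error. You claim that, since $\Phi_r \backslash e$ is a spike with tip $t$ and cotip $e'$, ``the dual is a tipped spike of rank~$r$, hence $3$-connected.'' This cannot be right on element counts alone: $\Phi_r \backslash e$ has $2r$ elements and rank $r$, so $(\Phi_r\backslash e)^*$ also has $2r$ elements and rank $r$, whereas a rank-$r$ tipped spike has $2r+1$ elements. In fact, applying Lemma~\ref{lem: tip_and_cotip} and noting that triangles and triads swap under duality, $(\Phi_r\backslash e)^*$ is again a spike with tip and cotip (with the roles of $t$ and $e'$ interchanged), so the appeal to duality is circular rather than a reduction to a previously-established fact. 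A direct verification that a spike with tip and cotip is $3$-connected is needed; the standard route is to check that a putative $2$-separation $(A,B)$ with $\lambda(A)\le 1$ is impossible by examining how $A$ and $B$ meet the legs and using the $4$-circuits $L_i \cup L_j$ and the triads $L_i \cup \{e'\}$. Your treatment of $\Phi_r\backslash\{t,e\}$ is similarly asserted rather than proved --- ``any candidate $2$-separation is obstructed by the $4$-circuits \ldots together with the triad structure'' names the relevant ingredients but does not carry out the check.

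The converse direction --- exhibiting a $2$-separation in every rank-$r$ restriction not on the list --- is the substantive half and is essentially correct. Your case split (some leg entirely deleted, versus no leg entirely deleted) is appropriate, and the rank computations ($r(A)=2$, $r(B)\le r-1$, hence $\lambda(A)\le 1$) check out; the auxiliary rank identities you record at the end justify those bounds. One minor point worth stating explicitly: when no leg is fully deleted and $X$ contains at least two non-tip elements, you may take those from \emph{any} two distinct legs, and the bound on $r(B)$ is uniform regardless of how many further legs $X$ touches. So the converse is fine; the forward direction needs the incorrect duality appeal replaced by a direct (if routine) connectivity check.
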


\begin{lemma}
\label{segdelete}
Let $M$ be a $3$-connected matroid and let $X\subseteq E(M)$ such that $M|X\cong U_{2, n}$, where $n\geq 4$. Then $M\delete x$ is $3$-connected for all $x\in X$.
\end{lemma}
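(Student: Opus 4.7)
The plan is a direct argument by contradiction. Suppose $M\delete x$ fails to be $3$-connected. Since $M$ is $3$-connected (hence has no coloops) and $x \in \cl_M(X - \{x\})$, we have $r(M\delete x) = r(M)$ and $M\delete x$ is connected; thus the only way $M\delete x$ can fail to be $3$-connected is by admitting a non-trivial $2$-separation $(A,B)$. The aim is to lift $(A,B)$ to a $2$-separation of $M$, which will contradict the hypothesis that $M$ is $3$-connected.

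The key observation is the rigidity of long segments. The sets $A \cap (X - \{x\})$ and $B \cap (X - \{x\})$ partition $X - \{x\}$, a set of size $n - 1 \geq 3$, so one of them, without loss of generality $A \cap (X - \{x\})$, has at least two elements. Because $M|X \cong U_{2,n}$, any two elements of $X$ span $X$, so $X \subseteq \cl_M(A \cap X)$, and in particular $x \in \cl_M(A)$.

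Consequently, $r_M(A \cup \{x\}) = r_M(A) = r_{M \delete x}(A)$, and since $r_M(B) = r_{M \delete x}(B)$ trivially and $r(M) = r(M \delete x)$, the connectivity function transfers: $\lambda_M(A \cup \{x\}) = \lambda_{M\delete x}(A) \leq 1$. As $|A \cup \{x\}| \geq 3$ and $|B| \geq 2$, the pair $(A \cup \{x\}, B)$ is a $2$-separation of $M$, contradicting $3$-connectivity. There is no real technical obstacle here; the whole proof is this one-step transfer argument, with the hypothesis $n \geq 4$ used only to ensure that $X - \{x\}$ has at least three elements and therefore cannot be split by $(A,B)$ with fewer than two elements on one side, which is exactly what makes the rigidity step go through without case analysis.
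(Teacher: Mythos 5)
Your proof is correct; the paper actually states this lemma without proof, citing it as ``well known and easily proved,'' and your argument is the standard one: one side of any $1$- or $2$-separation of $M\backslash x$ must contain at least two elements of the segment $X-\{x\}$ (this is where $n\ge 4$ is used), which places $x$ in that side's closure and lets the separation lift to $M$ unchanged. One small presentational wobble: you assert that $M\backslash x$ is connected before launching into the $2$-separation case, but you don't justify it separately -- fortunately the lifting argument you give applies verbatim to a $1$-separation (one side still has two elements of $X-\{x\}$ and $\lambda_M(A\cup\{x\})=\lambda_{M\backslash x}(A)=0$ then contradicts connectedness of $M$), so this is cosmetic rather than a gap.
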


\begin{lemma}
\label{lem: U25 U35}
Let $M$ be a $3$-connected matroid with rank and corank at least three. Then $M$ has a $U_{2, 5}$-minor if and only if $M$ has a $U_{3, 5}$-minor.
\end{lemma}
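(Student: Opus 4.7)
Since $U_{2,5}^* \cong U_{3,5}$ and matroid minors commute with duality (a minor of $M$ is isomorphic to $U_{2,5}$ if and only if the corresponding minor of $M^*$ is isomorphic to $U_{3,5}$), the two implications of the lemma are dual to each other. It therefore suffices to show that if $M$ has a $U_{2,5}$-minor then $M$ has a $U_{3,5}$-minor.

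My plan is to choose a $U_{2,5}$-minor $N = M/C\backslash D$ of $M$ with $|C|$ minimum. Since $r(M) \ge 3 > 2 = r(N)$, the set $C$ is non-empty; pick $e \in C$ and put $M' = M/(C - e) \backslash D$, so that $r(M') = 3$ and $M'/e = N$. The minimality of $|C|$ rules out $M' \backslash e \cong U_{2,5}$, and in particular $e$ is not a coloop of $M'$, so $M' \backslash e$ has rank $3$ on five elements. The rank inequality $r_{M'/e}(S) \le r_{M'}(S) \le r_{M'/e}(S) + 1$ for $S \subseteq E(M')-e$, combined with $M'/e \cong U_{2,5}$, forces every pair of $E(M')-e$ to be independent in $M'$. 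Hence $M' \backslash e$ is a simple rank-$3$ matroid on five elements, and every triple in $E(M')-e$ is either a triangle of $M'$ or a basis of $M' \backslash e$. If no such triple is a triangle, then $M' \backslash e \cong U_{3,5}$, producing the required $U_{3,5}$-minor of $M$.

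The main obstacle is to handle the case where $M' \backslash e$ contains a triangle $T = \{f,g,h\}$. The hope is to find a different $U_{2,5}$-minor of $M$ that uses strictly fewer contractions, contradicting the minimality of $|C|$. Concretely, using $T$ together with the $3$-connectivity of $M$, I would exploit orthogonality between the circuit of $M$ that projects to $T$ in $M'$ and cocircuits of $M$ witnessing $3$-connectivity, in order to swap $e$ with an element of $D$, or with an element of $C-e$, and produce the desired smaller minor. When no direct swap is available---for example when $\cl_{M'}(T)$ is a long line that forces extra incidences with $C$ and $D$---the backup plan is to induct on $|E(M)|$: pass, via Bixby's Lemma, to a proper $3$-connected single-element minor of $M$ that still carries a $U_{2,5}$-minor and still has rank and corank at least three, then apply the induction hypothesis. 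The base case $|E(M)|=6$, equivalently $r(M)=r^*(M)=3$, is then settled by a direct case analysis on the $3$-connected rank-$3$ matroids on six elements with a $U_{2,5}$-minor, verifying each has a $U_{3,5}$-minor.
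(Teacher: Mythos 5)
The paper gives no proof of this lemma; it attributes the result to Oxley \cite{ox89} and uses it as a black box. So there is no ``paper's own proof'' to compare against, and the proposal has to be judged on its own merits.

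Your opening steps are sound: the duality reduction, choosing a $U_{2,5}$-minor $N = M/C\backslash D$ with $|C|$ minimum, uncontracting $e \in C$ to obtain a rank-$3$, $6$-element minor $M'$ with $M'/e \cong U_{2,5}$, and showing that $M'\backslash e$ is a simple rank-$3$ matroid on $5$ elements. Noting that $M'\backslash e \cong U_{3,5}$ finishes the easy case. The gap is everything after that. Both of your escape routes for the case where $M'\backslash e$ has a triangle are explicitly unfinished (``the hope is to find\dots'', ``the backup plan is to induct\dots''), and the first route in fact fails. Concretely, take $M$ to be the rank-$3$ simple matroid on $\{a,b,c,d',d,e,f\}$ whose only nontrivial lines are the $4$-point line $\{a,b,c,d'\}$ and the triangle $\{a,d,e\}$, with $f$ free. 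This $M$ is $3$-connected with rank and corank $\ge 3$. Minimizing $|C|$ forces $|C|=1$, and (up to symmetry) the unique choice is $C=\{e\}$, $D=\{d\}$, giving $M'=M\backslash d$; then $M'\backslash e$ has the $4$-point line $\{a,b,c,d'\}$, and no amount of swapping $e$ with an element of $C-e=\emptyset$ or of $D$ produces a smaller $|C|$ or a triangle-free $M'\backslash e$. (A $U_{3,5}$-minor does exist---namely $M\backslash\{a,b\}$---but it is found by deleting two points of the long line, not by any local modification of the $6$-element minor $M'$.) So the ``swap via orthogonality'' idea does not resolve the case of a $4$-point line, and the minimality of $|C|$ alone does not preclude that case.

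The backup plan also needs substantially more than Bixby's Lemma as stated. Bixby's Lemma only says that, for a given $e$, at least one of $\co(M\backslash e)$ and $\si(M/e)$ is $3$-connected; it does not let you choose which, nor does it guarantee that the resulting matroid is a single-element minor, still has a $U_{2,5}$-minor, and still has rank and corank at least three. What you actually want is a $3$-connected chain from $U_{2,5}$ up to $M$ (Seymour's Splitter Theorem, which applies since $U_{2,5}$ is $3$-connected and not a wheel or whirl), and then an argument that you can enter that chain at a member with rank and corank at least three. Even then, the step just above a rank-$2$ member $U_{2,m}$ requires its own analysis: for $m\ge 6$ the $3$-connected coextension need not be $P_6$, $Q_6$, or $U_{3,6}$, so the ``direct case analysis on six elements'' base case is not sufficient by itself. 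In short, the proposal is a plausible roadmap, but the crucial middle portion is missing and part of it (the swap) would not succeed as described.
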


\begin{lemma}
\label{lem: tt}
Let $M$ be a $3$-connected matroid having at least four elements, and suppose that $\{e, f, g\}$ is a triangle of $M$ such that neither $M\backslash e$ nor $M\backslash f$ is $3$-connected. Then $M$ has a triad that contains $e$ and exactly one of $f$ and~$g$.
\end{lemma}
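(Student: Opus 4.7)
The plan is to argue by contradiction, combining $2$-separations of the non-$3$-connected matroids $M\backslash e$ and $M\backslash f$ via submodularity of the matroid connectivity function $\lambda_M$ to produce a small $2$-separation of $M\backslash e$, whose existence forces a triad through $e$.

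First, let $(X_1, Y_1)$ be a $2$-separation of $M\backslash e$ with $|X_1|, |Y_1|\ge 2$. Since $\{e, f, g\}$ is a triangle of $M$ and $M$ is $3$-connected, $f$ and $g$ must lie on opposite sides: if both were in $X_1$, say, then $e\in\cl_M(\{f, g\})\subseteq\cl_M(X_1)$, so $(X_1\cup\{e\}, Y_1)$ would be a $2$-separation of $M$, contradicting $3$-connectivity. Relabelling, I take $f\in X_1$ and $g\in Y_1$, and analogously choose a $2$-separation $(X_2, Y_2)$ of $M\backslash f$ with $e\in X_2$ and $g\in Y_2$.

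Next, I would apply submodularity of $\lambda_M$,
$$\lambda_M(S) + \lambda_M(T) \ge \lambda_M(S\cap T) + \lambda_M(S\cup T),$$
to pairs of sets formed from $X_1, Y_1, X_2, Y_2$ augmented by appropriate triangle elements. Combined with $\lambda_M(A)\ge 2$ for every $A$ with $|A|, |E(M)-A|\ge 2$ (from $3$-connectivity of $M$), and with the bounds $\lambda_{M\backslash e}(X_1)\le 1$ and $\lambda_{M\backslash f}(X_2)\le 1$, a case analysis on how $e, f, g$ distribute across the four intersections $X_1\cap X_2$, $X_1\cap Y_2$, $Y_1\cap X_2$, $Y_1\cap Y_2$ yields (possibly after iterated uncrossing) a $2$-separation $(\{f, h\}, Z)$ of $M\backslash e$ with $h\in E(M)-\{e, f, g\}$.

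Finally, the bound $r(\{f, h\}) + r(Z) - r(M\backslash e)\le 1$, together with the absence of coloops in $M\backslash e$ (as $M$ is $3$-connected on at least four elements, hence has no $2$-cocircuit), forces $\{f, h\}$ to be a $2$-cocircuit of $M\backslash e$. This lifts to the triad $\{e, f, h\}$ of $M$. I exclude $h = g$: otherwise $\{e, f, g\}$ is simultaneously a triangle and a triad, giving $\lambda_M(\{e, f, g\}) = 2 + (r(M) - 1) - r(M) = 1$ and hence a $2$-separation of $M$ (noting $|E(M)|\ge 5$ since $M\backslash e$ is not $3$-connected), contradicting $3$-connectivity. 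Thus $\{e, f, h\}$ is a triad of $M$ containing $e$ and exactly one of $f, g$, contrary to assumption. The main obstacle is the uncrossing step: the case analysis on the interaction of the two $2$-separations is delicate, and in configurations where some intersection cell is empty or a singleton, submodularity must be applied iteratively before a $2$-separation of $M\backslash e$ of the required small form emerges.
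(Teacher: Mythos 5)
The paper does not prove this lemma: it states it as Tutte's Triangle Lemma and cites Tutte~\cite{tut66} (it appears with proof as Lemma~8.7.7 in Oxley's book). So there is no proof in the paper to compare against; your sketch must stand on its own.

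The bookends of your sketch are sound: if $f$ and $g$ lay on the same side $X_1$ of a $2$-separation of $M\backslash e$ then $e\in\cl_M(X_1)$ and $(X_1\cup\{e\},Y_1)$ would $2$-separate $M$; a $2$-cocircuit $\{f,h\}$ of $M\backslash e$ lifts to a triad $\{e,f,h\}$ of $M$ because $M$ has no $2$-cocircuit; and $h\neq g$ because a common triangle--triad is $2$-separating once $|E(M)|\ge 5$ (which holds here, as no single-element deletion of a $3$-connected $4$-element matroid with a triangle can fail to be $3$-connected). The genuine gap is exactly the step you flag as the main obstacle. You cannot simply uncross $(X_1,Y_1)$ with $(X_2,Y_2)$ via submodularity of $\lambda_M$: these are $2$-separations of two \emph{different} matroids $M\backslash e$ and $M\backslash f$, and the submodular inequality does not combine $\lambda_{M\backslash e}(X_1)\le 1$ with $\lambda_{M\backslash f}(X_2)\le 1$ into a bound on a single connectivity function without first transporting one of the separations into the other matroid (e.g.\ trading $e$ for $f$ across the triangle). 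Moreover, your proposed ``case analysis on how $e,f,g$ distribute across the four intersections'' is vacuous: after fixing $f\in X_1$, $g\in Y_1$, $e\in X_2$, $g\in Y_2$, the element $g$ necessarily lies in $Y_1\cap Y_2$ and $e,f$ are not in the relevant ground sets, so there are no cases to split on. What is actually required --- and missing --- is the argument that some $2$-separating set of $M\backslash e$ can be shrunk to a $2$-element side containing $f$ or $g$, which is the real content of Tutte's proof. As written, the proposal does not establish the lemma.
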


\section{Proof of Theorem~\ref{main1}}
\label{proof1}

In this section, we prove Theorem~\ref{main1}. We begin by establishing the minors of spikes of rank at most two.

\begin{lemma}
\label{lem:smaller_spike_minors}
Let $r\in \{1, 2\}$ and let $M$ be a rank-$r$ matroid. Then $M$ is a minor of a spike if and only if, for some positive integer $n$, the matroid $M$ is isomorphic to a restriction of one of the matroids
\begin{enumerate}[{\rm (i)}]
\item $2U_{2, 3}\oplus U_{0, n}$, $\{n\}$-$U_{2,5}$, $\{2, n\}$-$U_{2,4}$, $\{2, 2, n\}$-$U_{2,3}$,

\item $U_{1, 4}\oplus U_{0, n}$, and $U_{1, n}\oplus U_{0, 1}$.
\end{enumerate}
\end{lemma}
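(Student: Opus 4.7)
The plan is to prove both directions separately, using Lemma~\ref{lemma: spike degeneracy} to reduce the rank-$1$ and rank-$2$ cases to an analysis of how contractions act on the rank-$3$ matroids characterised there.

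For the ``if'' direction, we construct each listed matroid as a minor of a spike. The matroid $2U_{2,3}\oplus U_{0,n}$ arises from an overloaded rank-$3$ free tipped spike with $n+1$ tip elements by contracting a single tip element, while $\{m\}$-$U_{2,5}$ arises from an overloaded rank-$3$ free tipped spike with $m-1$ tip elements by contracting a single leg element. Inserting one or two appropriate traversals into the latter spike before contracting the leg element yields $\{2,m\}$-$U_{2,4}$ and $\{2,2,m\}$-$U_{2,3}$ respectively. For the rank-$1$ matroids, $U_{1,4}\oplus U_{0,n}$ is obtained from $2U_{3,4}\oplus U_{0,n}$ by contracting one element from each of two parallel pairs, and $U_{1,m}\oplus U_{0,1}$ is obtained from an overloaded rank-$3$ tipped spike containing the traversal $\{x_1,x_2,x_3\}$ by contracting $x_1$ and $x_2$.

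For the ``only if'' direction, suppose $M$ is a rank-$r$ minor of a spike with $r\in\{1,2\}$. Writing $M = S/C\setminus D$ where $S$ is a spike, we choose $C'\subseteq C$ so that $N := S/C'$ has rank exactly $3$; then $M$ is a minor of $N$, and by Lemma~\ref{lemma: spike degeneracy}, $N$ is a restriction of either $2U_{3,4}\oplus U_{0,n}$ or an overloaded rank-$3$ tipped spike~$\Phi^+$. Thus $M = N/X\setminus Y$ with $|X| = 3 - r$. If $N$ arises from $2U_{3,4}\oplus U_{0,n}$, each contraction of a non-loop element of a parallel pair turns its partner into a loop and preserves the remaining classes, so in rank $2$ the matroid $M$ is a restriction of $2U_{2,3}\oplus U_{0,n'}$, and in rank $1$ it is a restriction of $U_{1,4}\oplus U_{0,n'}$, both in the list.

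If $N$ is a restriction of $\Phi^+$, we analyse the contractions case-by-case. Contracting a tip element collapses the remaining tip elements to loops and each surviving full leg to a parallel pair, producing a restriction of $2U_{2,3}\oplus U_{0,n'}$. Contracting a leg element $x_1$ instead merges $y_1$ (when present) with the tip class into a single parallel class, while each full leg among $\{x_2,y_2\}$ and $\{x_3,y_3\}$ is preserved as a parallel pair; each traversal of $\Phi^+$ containing $x_1$ becomes a $2$-circuit that identifies two elements from the remaining legs. By Lemma~\ref{circuits}(iii), no two traversals of $\Phi^+$ share more than $r-2 = 1$ element, so at most two traversals pass through $x_1$ and no two such identifications coincide; accordingly, the rank-$2$ output is a restriction of $\{m\}$-$U_{2,5}$, $\{2,m\}$-$U_{2,4}$, or $\{2,2,m\}$-$U_{2,3}$, depending on the number of traversals through $x_1$. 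For rank $1$, we contract two independent elements $X = \{e_1,e_2\}$; the key observation is that a loop in $M$ can arise only from a circuit of $\Phi^+$ contained in $X\cup\{z\}$ for some $z\notin X$, and Lemma~\ref{circuits}(iii) ensures at most one such $z$ exists, so $M$ has at most one loop and is a restriction of $U_{1,m}\oplus U_{0,1}$. The main obstacle is the bookkeeping required to track how triangles, $4$-circuits, and traversals of $\Phi^+$ transform under each contraction, but the bound in Lemma~\ref{circuits}(iii) keeps the case analysis finite and tractable.
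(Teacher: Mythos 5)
Your overall strategy matches the paper's: reduce to the rank-$3$ description from Lemma~\ref{lemma: spike degeneracy} and analyse contractions. The ``if'' direction and most of the rank-$2$ analysis are sound in spirit, although one intermediate claim there is false: after contracting the leg element $x_1$ from an overloaded rank-$3$ tipped spike, the surviving legs $\{x_2,y_2\}$ and $\{x_3,y_3\}$ are \emph{not} parallel pairs. Indeed $\{x_1,x_2,y_2\}$ is independent in a rank-$3$ tipped spike, so $x_2$ and $y_2$ remain distinct points in $N/x_1$. This is exactly why the free case gives $\{m\}$-$U_{2,5}$ (five distinct points) rather than $\{2,2,m\}$-$U_{2,3}$. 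Your final conclusion for rank~$2$ is correct, but the stated picture of how the contraction acts is not, and would give the wrong list if followed literally.

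The genuine gap is in the rank-$1$ case when $N$ is a restriction of the overloaded tipped spike $\Phi^+$. You claim that after contracting two independent elements $X=\{e_1,e_2\}$ the matroid $M$ has at most one loop, citing Lemma~\ref{circuits}(iii). But Lemma~\ref{circuits}(iii) bounds only \emph{traversal} intersections; loops can also arise from the triangles through the tip and from the tip parallel class. For example, if $X=\{t_1,x_1\}$ (a tip element and a leg element), then all of the other $n-1$ tip elements and $y_1$ become loops, giving $U_{1,4}\oplus U_{0,n}$; likewise $X=\{x_1,y_1\}$ (a whole leg) turns every tip element into a loop. Your argument silently handles only the case in which $X$ consists of two leg elements from different legs. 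The paper sidesteps this entirely: once the rank-$2$ case is done, every rank-$1$ spike minor is a contraction of a rank-$2$ matroid in the already-established list (i), and one simply checks which rank-$1$ matroids result. To repair your direct approach you would need a case split on whether $X$ contains a tip element or a full leg (yielding restrictions of $U_{1,4}\oplus U_{0,n}$) versus two leg elements from distinct legs (yielding, as you argue, at most one loop and a restriction of $U_{1,m}\oplus U_{0,1}$).
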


\begin{proof}
Let $M$ be a rank-$2$ spike minor. Then $M$ is a contraction of a rank-$3$ spike minor and so, by Lemma~\ref{lemma: spike degeneracy}, the matroid $M$ is either a minor of $2U_{3, 4} \oplus U_{0,m}$ for some positive integer $m$, or a minor of a rank-$3$ tipped spike $N$ in which the tip has been replaced by a parallel class of size $n$ for some positive integer $n$. Each rank-$2$ minor of $2U_{3, 4} \oplus U_{0, m}$ is a restriction of $2U_{2, 3} \oplus U_{0, m+1}$. Thus, if the former holds, then $M$ is a restriction of $2U_{2,3} \oplus U_{0,m+1}$. Otherwise, $M$ is a restriction of $N / e$ for some element $e$ of $N$. If $e$ is in the parallel class of size $n$, then $N/e$ is isomorphic to $2U_{2, 3}\oplus U_{0, n-1}$. Otherwise, $N/e$ consists of the parallel class of size $n$ and four additional elements. Depending on whether $e$ is in zero, one, or two traversals, $N/e$ is isomorphic to $\{n+1\}$-$U_{2, 5}$, $\{2, n+1\}$-$U_{2, 4}$, or $\{2, 2, n+1\}$-$U_{2,3}$, respectively. In each case, $M$ is a restriction of one of the matroids in (i). Conversely, each of the matroids in (i) is a rank-$2$ minor of a spike, which completes the proof of the lemma when $M$ has rank~$2$.

Now let $M$ be a rank-$1$ spike minor. Then $M$ is a contraction of a rank-$2$ spike minor, so $M$ is a restriction of a matroid obtained by contracting a non-loop element from one of the matroids in (i). Every matroid obtained in this way is isomorphic to either $U_{1,4} \oplus U_{0,n}$ or $U_{1,n} \oplus U_{0,1}$, for some positive integer $n$, so $M$ is a restriction of one of the matroids in (ii). Each of the matroids in (ii) is a rank-$1$ minor of a spike, which completes the proof of the lemma.
\end{proof}

The rest of the proof of Theorem~\ref{main1} essentially consists of the next two lemmas. The first lemma determines the excluded minors of $\cS$ of rank at most three and the second lemma determines the excluded minors of $S$ of rank and corank at least four. 

\begin{lemma}
\label{lemma: small spike-minors}
Let $M$ be a matroid with rank at most three. Then $M$ is an excluded minor for $\cS$ if and only if $M$ is isomorphic to one of the matroids
\begin{enumerate}[{\rm (i)}]
\item $U_{1, 5}\oplus U_{0, 2}$,

\item $U_{2, 4}\oplus U_{0, 1}$,\, $U_{2, 6}$,\, $U_{1, 3}\oplus U_{1, 3}$,\, $\{2, 2, 2\}$-$U_{2, 4}$,\, $\{2, 2\}$-$U_{2, 5}$,\, $U_{1, 3}\oplus U_{1, 1}\oplus U_{0, 1}$,

\item $U_{2, 4}\oplus U_{1, 1}$,\, $U_{2, 3}\oplus U_{1, 1}\oplus U_{0, 1}$,\, $\{2\}$-$U_{2, 3}\oplus U_{1, 2}$,\, $P_7^-$, and $P_7^=$.
\end{enumerate}
\end{lemma}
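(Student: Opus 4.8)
The plan is to combine Lemma~\ref{lem:smaller_spike_minors} with a direct analysis of rank-$3$ spike minors via Lemma~\ref{lemma: spike degeneracy}. Since the claimed excluded minors split by rank, I would treat the three ranks separately, and in each rank establish two things: that each listed matroid $N$ is an excluded minor (that is, $N\notin\cS$ but every proper minor of $N$ is in $\cS$), and that every matroid not in $\cS$ with rank at most three contains one of the listed matroids as a minor.

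\textbf{Ranks one and two.} For rank $1$, Lemma~\ref{lem:smaller_spike_minors}(ii) says $M\in\cS$ iff $M$ is a restriction of $U_{1,4}\oplus U_{0,n}$ or $U_{1,n}\oplus U_{0,1}$ for some $n$. A rank-$1$ matroid is determined by the number of parallel classes of non-loops together with the number of loops; a short case check shows the minimal rank-$1$ matroids failing this description are exactly $U_{1,5}\oplus U_{0,2}$, which handles (i). For rank $2$, I would use the characterisation in Lemma~\ref{lem:smaller_spike_minors}(i): $M\in\cS$ iff $M$ is a restriction of one of $2U_{2,3}\oplus U_{0,n}$, $\{n\}$-$U_{2,5}$, $\{2,n\}$-$U_{2,4}$, $\{2,2,n\}$-$U_{2,3}$. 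A rank-$2$ simple matroid is just $U_{2,k}$, and the general rank-$2$ matroid is recorded by a partition (parallel class sizes) plus a loop count; I would phrase ``not a restriction of any of the four families'' as a finite list of forbidden configurations and read off the minimal ones. The ones with loops give $U_{2,4}\oplus U_{0,1}$ and $U_{1,3}\oplus U_{1,1}\oplus U_{0,1}$ (the latter being $\{2,2\}$-$U_{2,3}$ with an added loop after noticing one leg collapses — or more precisely a rank-$2$ matroid with too many parallel classes once a loop is present); the loopless ones are $U_{2,6}$ (too many points on a line), $U_{1,3}\oplus U_{1,3}$ (two parallel classes of size $3$ is impossible in $\{2,2,n\}$-$U_{2,3}$ for $n\ge 3$ and in the others), $\{2,2,2\}$-$U_{2,4}$, and $\{2,2\}$-$U_{2,5}$. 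For each I would verify excluded-minor status by deleting/contracting a single element and checking the result lands in one of the admissible families.

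\textbf{Rank three.} This is the substantive case. By Lemma~\ref{lemma: spike degeneracy}, a rank-$3$ matroid is a spike minor iff it is a restriction of $2U_{3,4}\oplus U_{0,n}$ (the ``tip contracted'' case) or of an overloaded rank-$3$ tipped spike, i.e. $P_7$ with its tip blown up to a parallel class of size $n$ and some traversals possibly present (the rank-$3$ tipped spikes $P_7$, $P_7^-$, $P_7^=$ being the only rank-$3$ tipped spikes up to choice of traversals). I would first identify the ``maximal'' spike minors of rank three — the overloaded $P_7$, $P_7^-$, $P_7^=$ and $2U_{3,4}\oplus U_{0,n}$ — and then, for each of the five candidate excluded minors in (iii), check (a) it is not a restriction of any of these, and (b) each single-element deletion and contraction is. The matroids $U_{2,4}\oplus U_{1,1}$ and $U_{2,3}\oplus U_{1,1}\oplus U_{0,1}$ and $\{2\}$-$U_{2,3}\oplus U_{1,2}$ are caught because a rank-$3$ spike minor that is disconnected or has a large segment is very restricted (a segment of size $\ge 5$ forces being inside $\{n\}$-$U_{2,5}$-type structure after contracting a coloop, etc.), while $P_7^-$ and $P_7^=$ fail to be restrictions of any overloaded spike (relaxing a tip-circuit-hyperplane destroys a triangle through the tip) yet every proper minor does embed — here Lemma~\ref{lem: tip_and_cotip} and Lemma~\ref{circuits} give the needed structural control, and Lemma~\ref{lem: p8}-style reasoning is not yet needed.

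\textbf{Main obstacle.} The hard part will be the converse in rank three: showing that \emph{every} rank-$3$ matroid with no minor in the list is actually a restriction of one of the four maximal spike minors. The natural strategy is to suppose $M$ is a rank-$3$ matroid with none of the listed matroids (nor, we may assume by minimality, any strictly smaller non-spike-minor) as a minor, and argue about the lines (rank-$2$ flats) of $M$: the absence of $U_{2,6}$ bounds line sizes, the absence of $U_{2,4}\oplus U_{1,1}$ and $\{2\}$-$U_{2,3}\oplus U_{1,2}$ controls how a coloop or a small component can attach, and the absence of $U_{1,3}\oplus U_{1,3}$, $\{2,2,2\}$-$U_{2,4}$, $\{2,2\}$-$U_{2,5}$ (which descend from rank two) limits the multiplicities on each line. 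One then has to show the remaining possibilities are exactly: (1) at most three long lines through a common point in ``spike position'', possibly with added loops — giving a restriction of an overloaded $P_7$, $P_7^-$, or $P_7^=$; or (2) the $2U_{3,4}\oplus U_{0,n}$ pattern. Organising this geometric case analysis cleanly — in particular separating the situation where three lines of size $\ge 3$ share a point (spike-like) from where they do not (which must then be small enough to sit inside $2U_{3,4}$) — is where most of the work lies, and I expect it to consume the bulk of the argument.
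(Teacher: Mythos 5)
Your high-level decomposition matches the paper's: use Lemma~\ref{lem:smaller_spike_minors} for ranks one and two, and Lemma~\ref{lemma: spike degeneracy} plus a geometric case analysis for rank three. The forward direction (each listed matroid is an excluded minor) is, as in the paper, a routine check. So the architecture is right. But there are two substantive problems.

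First, a conceptual slip in how you describe the rank-$3$ spike minors. You write that a rank-$3$ spike minor with the tip not contracted is ``$P_7$ with its tip blown up to a parallel class of size $n$ and some traversals possibly present (the rank-$3$ tipped spikes $P_7$, $P_7^-$, $P_7^=$ being the only rank-$3$ tipped spikes up to choice of traversals)''. This is incorrect in two ways: $P_7^-$ and $P_7^=$ are \emph{not} tipped spikes --- relaxing a circuit-hyperplane through the tip destroys one of the leg-triangles $\{t,x_i,y_i\}$, so $t$ is no longer a tip; and $P_7$ is one specific rank-$3$ tipped spike (with a particular set $\cC_3$ of traversals), not a base case to which you add traversals. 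The correct reading of Lemma~\ref{lemma: spike degeneracy}(ii) for $r=3$: the overloaded tipped spikes are the rank-$3$ tipped spikes, ranging over all admissible choices of $\cC_3$ as in Lemma~\ref{circuits}, with the tip replaced by a parallel class. You do later observe that $P_7^-$ ``fails to be a restriction of any overloaded spike'' because ``relaxing a tip-circuit-hyperplane destroys a triangle through the tip'', which is correct and contradicts the earlier sentence --- so you know the right fact but have stated the wrong one. This needs to be fixed, since the entire converse argument hinges on knowing exactly which rank-$3$ matroids are spike minors.

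Second, and more seriously, the converse in rank three --- which you correctly identify as where ``most of the work lies'' --- is left at the level of a plan. Your proposed dichotomy (``three lines of size $\ge 3$ share a point or not'') is a reasonable first cut, but it does not transparently yield the result, and in particular does not naturally surface $P_7^-$ and $P_7^=$, which are $7$-element, simple, triangle-rich matroids where the relevant lines are \emph{almost} concurrent. The paper's actual argument is considerably more fine-grained: after ruling out a $U_{2,4}$-restriction, it splits on whether $M$ has a triangle or a parallel class of size $\ge 3$; within the triangle case it first shows $M$ has no loops and (after two further subcases depending on whether a parallel pair lies in the closure of a triangle) no parallel elements; then bounds $|E(M)|\in\{7,8\}$ via Lemma~\ref{lemma: spike degeneracy} and finishes by counting, for $|E(M)|=8$, the number of triangles through a fixed element (forcing one of $U_{2,6}$, $\{2,2\}$-$U_{2,5}$, $\{2,2,2\}$-$U_{2,4}$ as a contraction) and, for $|E(M)|=7$, the number of triangles through the unique element off two disjoint triangles (forcing $M\cong P_7^-$ or $P_7^=$). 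None of this is implied by the sketch. As it stands, the gap between your outline and a proof is essentially the entire content of the rank-$3$ case, so this should not be read as a minor fleshing-out task.

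A smaller remark: in rank two, $U_{1,3}\oplus U_{1,1}\oplus U_{0,1}$ is not ``$\{2,2\}$-$U_{2,3}$ with an added loop'' --- the parallel-class structures differ ($3{+}1$ plus a loop versus $2{+}2{+}1$ plus a loop). You hedge this immediately, but the imprecision is symptomatic of the rank-two check also needing to be done carefully rather than read off.
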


\begin{proof}
Using Lemmas~\ref{lemma: spike degeneracy} and~\ref{lem:smaller_spike_minors}, it is straightforward to verify that each of the matroids in (i), (ii), and (iii) is an excluded minor for $\cS$. It remains to show that there are no further excluded minors for $\cS$ of rank at most three. We check each rank in turn, noting that every rank-$0$ matroid is a spike minor.

\begin{sublemma}
The matroid $U_{1,5} \oplus U_{0,2}$ is the only rank-$1$ excluded minor for $\cS$.
\label{rank1}
\end{sublemma}

Let $M$ be a rank-$1$ excluded minor for $\cS$. Then $M\cong U_{1, n}\oplus U_{0, m}$ for some positive integers $m$ and $n$. If $n\geq 5$ and $m\geq 2$, then $M$ has a minor isomorphic to $U_{1,5} \oplus U_{0,2}$. If either $n < 5$ or $m < 2$, then $M$ is a spike minor by Lemma~\ref{lem:smaller_spike_minors}(ii). Thus (\ref{rank1}) holds.

\begin{sublemma}
The matroids listed in (ii) are the only rank-$2$ excluded minors for $\cS$.
\label{rank2}
\end{sublemma}

Let $M$ be an additional rank-$2$ excluded minor for $\cS$. If $M$ has a loop, then $M$ contains at most three (including trivial) distinct parallel classes; otherwise, $M$ has a minor isomorphic to $U_{2,4} \oplus U_{0,1}$. If every parallel class of $M$ has size at most two, then $M$ is a restriction of $2U_{2, 3}\oplus U_{0, n}$ and so, by Lemma~\ref{lem:smaller_spike_minors}, $M$ is in $\cS$. Therefore, $M$ has a parallel class of size at least three, and so $M$ has a minor isomorphic to $U_{1, 3}\oplus U_{1, 1}\oplus U_{0,1}$, another contradiction. Hence, $M$ has no loops.

Let $\si(M)\cong U_{2, n}$ for some positive integer $n\ge 2$. Since $M$ has no minor isomorphic to $U_{2, 6}$, we have that $n < 6$. Furthermore, as $M$ has no minor isomorphic to $U_{1,3} \oplus U_{1,3}$, $M$ has at most one parallel class containing at least three elements. Hence, if $n \leq 3$, Lemma~\ref{lem:smaller_spike_minors} implies that $M$ is in $\cS$. Therefore $n\in \{4, 5\}$. If $n = 4$, then, as $M$ does not have a minor isomorphic to $\{2,2,2\}$-$U_{2, 4}$, we have that $M$ has at most two non-trivial parallel classes. But then, by Lemma~\ref{lem:smaller_spike_minors}, $M$ is a spike minor, a contradiction. If $n = 5$, then, as $M$ does not have a minor isomorphic to $\{2, 2\}$-$U_{2, 5}$, it follows that $M$ has at most one non-trivial parallel class. But, again by Lemma~\ref{lem:smaller_spike_minors}, $M$ is in $\cS$. This completes the proof of (\ref{rank2}).

\begin{sublemma}
The matroids listed in (iii) are the only rank-$3$ excluded minors for~$\cS$.
\label{rank3}
\end{sublemma}

Let $M$ be an additional rank-$3$ excluded minor for $\cS$. If $M$ has a $U_{2, 4}$-restriction, then $M$ has a minor isomorphic to $U_{2, 4}\oplus U_{1, 1}$, and so $M$ has no such restriction. Suppose $M$ contains neither a triangle nor a parallel class of size at least three. As $r(M) = 3$ and $M$ contains no triangles, we have that $\si(M)\cong U_{3, m}$ for some $m\geq 3$. Since $M$ is not a spike minor, Lemma~\ref{lemma: spike degeneracy} implies that $m\geq 5$. If $M$ has a loop, then $M$ has a minor isomorphic to $U_{3, 5}\oplus U_{0,1}$, and so $M$ has a minor isomorphic to $U_{2, 4}\oplus U_{0, 1}$, a contradiction. Furthermore, if $M$ has a pair of elements in parallel, then contracting one of these elements implies that $M$ has a minor isomorphic to $U_{2, 4}\oplus U_{0, 1}$. So $M$ has no loops or parallel elements. Since $U_{3, 5}$ and $U_{3, 6}$ are both spike minors, $m\geq 7$. But now $M$ has a $U_{2, 6}$ minor, a contradiction. Hence $M$ has either a triangle or a parallel class of size at least three.

Suppose that $M$ has a triangle $T$. If $M$ has a loop, then $M$ has a restriction isomorphic to $U_{2, 3}\oplus U_{1, 1}\oplus U_{0, 1}$. Thus $M$ has no loops. Assume that $M$ has two distinct pairs of parallel elements $\{a_1, a_2\}$ and $\{b_1, b_2\}$ such that $r(\{a_1, a_2, b_1, b_2\}) = 2$. If $\{a_1, a_2, b_1, b_2\}\subseteq \cl(T)$, then $M/a_1$ has a minor isomorphic to $U_{1, 3}\oplus U_{1, 1}\oplus U_{0, 1}$, a contradiction. If $\{a_1, a_2\}\subseteq \cl(T)$ and $\{b_1, b_2\}\not\subseteq \cl(T)$, then $M$ has a minor isomorphic to $\{2\}$-$U_{2, 3}\oplus U_{1, 2}$. Thus we may assume that no element in $\{a_1, a_2, b_1, b_2\}$ is contained in a triangle. But now $M/ a_1$ has a minor isomorphic to $U_{2, 4}\oplus U_{0, 1}$. Hence, $M$ has at most one non-trivial parallel class.

Assume that $M$ has a pair of parallel elements $\{a_1, a_2\}$. Suppose that $\{a_1, a_2\}\not\subseteq \cl(T)$. By Lemma~\ref{lemma: spike degeneracy}(ii), the matroid $U_{2, 3}\oplus U_{1, n}$ is a spike minor for all positive integers $n$. Therefore, there exists an element $e\in E(M) - (T\cup \{a_1, a_2\})$ such that $r(\{a_1, e\})=2$. The matroid $M / a_1$ does not have a minor isomorphic to $U_{2,4} \oplus U_{0,1}$, which implies that there is a triangle $T'$ of $M$ which contains $a_1$, $e$, and an element of $T$. Furthermore, this is true for all elements of $E(M) - (T \cup \{a_1,a_2\})$ which are not in parallel with $a_1$ and $a_2$. Since $M$ has no $U_{2,4}$-restriction, and each of these elements are in a triangle with $a_1$ and an element of $T$, it follows that there are at most three such elements. But this implies that $M$ is a spike minor with the parallel pair $\{a_1,a_2\}$ at the tip and a traversal $T$. This is a contradiction, so $\{a_1,a_2\}$ is in the closure of every triangle of $T$.

Choose a maximal set $\{x_1,x_2,\ldots,x_k\}$ of elements of $E(M) - \{a_1\}$ such that $\{a_1,x_1,x_2,\ldots,x_k\}$ does not contain a non-spanning circuit. Each $x_i$ is either not contained in a triangle, or is contained in a triangle with $a_1$ and an element outside of $\{a_1,x_1,x_2,\ldots,x_k\}$. In particular, if $k \leq 3$, then $M$ is a spike minor with the parallel pair $\{a_1,a_2\}$ at the tip, and each $x_i$ an element from a separate leg. Thus, $k \geq 4$. But now $\{x_1,x_2,x_3,x_4\}$ is a $U_{2,4}$-restriction in $M / a_1$, so $M$ has a minor isomorphic to $U_{2,4} \oplus U_{0,1}$. Hence, $M$ has no parallel elements.

Let $e\in E(M)$. By Lemma~\ref{lemma: spike degeneracy}, the spike minor $E(M \backslash e)$ has at most $7$ elements. If $|E(M)|\le 6$, then it is easily checked that $M$ is a spike minor. Thus $|E(M)|\in \{7, 8\}$. Say $|E(M)| = 8$. Then $e$ is contained in at most three triangles. If $e$ is contained in zero or one triangles, then $M/e$ has a restriction isomorphic to $U_{2, 6}$. If $e$ is contained in exactly two triangles, then $M/e$ is isomorphic to $\{2, 2\}$-$U_{2, 5}$, while if $e$ is contained in exactly three triangles, then $M/e$ is isomorphic to $\{2,2,2\}$-$U_{2, 4}$. Therefore, $|E(M)| = 7$. If $e$ is not contained in a triangle, then $M/e\cong U_{2, 6}$. Hence, every element of $M$ is contained in a triangle. If $M$ has an element common to three triangles, then $M$ is a spike minor. Now, $M$ must have two disjoint triangles, $T$ and $T'$, say. The unique element $f$ of $E(M) - (T \cup T')$ is contained in at least one triangle, and cannot be contained in three triangles. If $f$ is contained in one triangle, then $M \cong P_7^=$, and if $f$ is contained in two triangles, then $M \cong P_7^-$. This is a contradiction, so $M$ has no triangles.

The last case to consider is when $M$ has no triangles, but has a parallel class $\{a_1, a_2, \ldots, a_k\}$, where $k\ge 3$. If $M$ has a second parallel class $\{b_1,b_2,\ldots,b_\ell\}$ with $\ell \geq 2$, then $M / b_1$ has a minor isomorphic to $U_{1, 3}\oplus U_{1, 1}\oplus U_{0, 1}$, a contradiction. Therefore, $\{a_1, a_2, \ldots, a_k\}$ is the only parallel class of $M$ of size at least two. If $|E(M) - \{a_1, a_2, \ldots, a_k\}|\leq 3$, then $M$ is a spike minor. If $|E(M) - \{a_1, a_2, \ldots, a_k\}| > 3$, then, as $M$ has no triangles, $M/a_1$ has a minor isomorphic to $U_{2, 4}\oplus U_{0, 1}$. This last contradiction proves (\ref{rank3}).

Combining (\ref{rank1})--(\ref{rank3}) completes the proof of the lemma.
\end{proof}

The next lemma shows that there is a unique excluded minor for $\mathcal{S}$ of rank and corank at least four, and in doing so, complete the proof of Theorem~\ref{main1}. The proof of Lemma~\ref{rank4} is a lengthy case analysis. It begins by showing that if $M$ is an excluded minor for $\mathcal S$ and $r(M), r^*(M)\ge 4$, then $M$ has certain properties and relatively quickly deduces that $r(M)=4=r^*(M)$. Most of the work in proving the lemma is to handle the case $r(M)=4=r^*(M)$. The primary reason for this is that a $4$-circuit of a rank-$4$ spike need not be a union of legs; it could instead be a traversal.  

\begin{lemma}
The matroid $P_8$ is the unique excluded minor for $\cS$ of rank and corank at least four.
\label{rank4}
\end{lemma}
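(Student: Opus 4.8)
The plan is to show that $P_8$ is an excluded minor for $\cS$ and that it is the only one with rank and corank at least four. That $P_8$ is an excluded minor can be handled quickly using Lemma~\ref{lem: p8}: deleting or contracting any element of $P_8$ yields $P_7$ or $P_7^*$, and $P_7$ is a rank-$3$ tipped spike, hence in $\cS$; so every proper minor of $P_8$ lies in $\cS$, while $P_8$ itself is not a spike (it has rank and corank $4$ but is not a restriction of either matroid in Lemma~\ref{lemma: spike degeneracy}, which one checks directly — $P_8$ is $3$-connected with no parallel classes and no $U_{3,4}$-restriction-based structure, and it is not a tipped spike since it has the wrong number of triangles). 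The substantive direction is uniqueness: I would let $M$ be an excluded minor for $\cS$ with $r(M), r^*(M) \ge 4$ and argue that $M \cong P_8$.

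First I would extract the basic structural constraints on such an $M$. Since $M$ is an excluded minor and Theorem~\ref{main1}(i)--(iii) list all excluded minors of rank (or, dually, corank) at most three, $M$ has no minor isomorphic to any of those matroids; in particular $M$ has no $U_{2,6}$-minor and no $U_{1,5}\oplus U_{0,2}$-minor, and dually no $U_{4,6}$- or $U_{0,5}$-type minors. Standard arguments then force $M$ to be $3$-connected: if $M$ had a $1$- or $2$-separation, one could peel off a small piece, apply the rank-$\le 3$ classification (using the self-duality of $\cS$ and that excluded minors are minor-minimal) and derive a contradiction, since the ``pieces'' would have to be spike minors but then $M$ itself would be a spike minor or contain a smaller excluded minor. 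Likewise $M$ has no parallel pair and no series pair (otherwise contract/delete to get a smaller excluded minor via the rank-$\le 3$ list), and $M$ has no $U_{2,5}$- or $U_{3,5}$-restriction of the wrong flavour. The key reduction is then: for every $e \in E(M)$, both $M\backslash e$ and $M/e$ lie in $\cS$, and since removing an element drops the rank or corank by at most one while preserving one of them at $\ge 3$, Lemma~\ref{lemma: spike degeneracy} (and its dual) pins down $M\backslash e$ and $M/e$ as restrictions of the two explicit matroid families — an overloaded tipped spike or $2U_{r,r+1}\oplus U_{0,n}$, or a dual thereof. Combining the constraint ``$M$ has no parallel/series classes'' with these forms should force $M\backslash e$ and $M/e$ to be genuine spikes (tipped or tipless), and a counting argument on $|E(M)|$ — each $M\backslash e$ has at most $2r+1$ elements where $r = r(M)$, and dually — should squeeze out $r(M) = r^*(M) = 4$ and $|E(M)| = 8$.

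**The hard part will be** the rank-$4$, corank-$4$, eight-element case, exactly as the lemma's preamble warns: here a $4$-circuit of a rank-$4$ spike can be a traversal rather than a union of two legs, so knowing ``$M\backslash e$ is a rank-$4$ spike minor'' does not immediately give a leg decomposition of $M$ that is consistent across different choices of $e$. My approach would be to fix one element $e$, write $M\backslash e$ as a rank-$4$ spike (tipless, since $M$ has no parallel pairs) with legs $L_1,L_2,L_3,L_4$, and then analyse how $e$ attaches: orthogonality of $e$ with the $4$-cocircuits $L_i \cup L_j$ of $M\backslash e$, together with the requirement that $M/e$ also be a spike minor (of corank $4$, by duality playing the same role), should constrain the circuits and cocircuits through $e$ enough to reconstruct the full circuit set. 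The cleanest finish is to verify the hypotheses of Lemma~\ref{lem: p8}: show that for every $f \in E(M)$, $M\backslash f \cong P_7$ and $M/f \cong P_7^*$. Since we already know $M\backslash f$ is a rank-$4$ spike minor on seven elements with no parallel pairs and not containing any rank-$3$ excluded minor (in particular not $P_7^-$ or $P_7^=$), and the analysis above restricts its traversal structure, the seven-element rank-$4$ (equivalently, by the deletion, a rank-$3$-relevant) case should leave $P_7$ as the only possibility that is self-consistent under also requiring $M/f \cong P_7^*$; the matroids $P_7^-$ and $P_7^=$ are excluded because they are themselves excluded minors for $\cS$ (Theorem~\ref{main1}(iii)), so $M$ cannot have them as minors. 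Once every single-element deletion is $P_7$ and every single-element contraction is $P_7^*$, Lemma~\ref{lem: p8} gives $M \cong P_8$, completing the proof.

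I expect the bookkeeping around traversals to be where essentially all the length goes: one likely has to enumerate, up to the symmetries of the spike $M\backslash e$, the possible triangles and $4$-cocircuits containing $e$, rule out each configuration that produces a forbidden minor (a $U_{2,6}$, a $U_{2,4}\oplus U_{1,1}$, a $\{2\}$-$U_{2,3}\oplus U_{1,2}$, or a $P_7^-$/$P_7^=$ after a single deletion or contraction), and show the lone survivor is $P_8$. The $3$-connectivity reduction and the ``no parallel/series classes'' reductions, while routine, also need care because $\cS$ is not minor-closed — one must always argue via the minor-closed hull and via the fact that $M$ being an excluded minor means every proper minor is in $\cS$, using Lemmas~\ref{lemma: spike degeneracy} and~\ref{lem:smaller_spike_minors} and the already-established rank-$\le 3$ classification together with closure of $\cS$ under duality.
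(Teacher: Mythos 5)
Your plan has the same skeleton as the paper's proof---verify $P_8$ is an excluded minor via Lemma~\ref{lem: p8}, establish structural constraints (simplicity, cosimplicity, no triangles or triads, no $U_{3,5}$-restrictions), force $r(M)=r^*(M)=4$, and then analyse the eight-element case---but the step you dismiss as a ``counting argument'' is precisely where a substantial structural argument lives, and counting alone does not deliver it. Once you know $M$ is simple, cosimple, and triangle- and triad-free, you get that $M\backslash e$ is a simple, cosimple restriction of a rank-$r$ tipless spike for every $e$, so $|E(M)|\le 2r(M)+1$, i.e.\ $r^*(M)\le r(M)+1$; dually $r(M)\le r^*(M)+1$. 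That is all the size bounds give: $|r(M)-r^*(M)|\le 1$, with no upper bound on either. The paper's sublemma establishing $\min\{r(M),r^*(M)\}\le 4$ instead assumes $r\ge r^*\ge 5$, uses the separately-proved absence of $U_{3,5}$-restrictions to force every $4$-circuit of a single-element deletion to be a union of two legs, and then runs a leg-tracking and submodularity argument through two different deletions before invoking Lemma~\ref{pairs} to conclude that $M$ itself is a spike, a contradiction. Your proposal does not acknowledge that such an argument is needed, and without it the reduction to rank and corank $4$ does not go through.

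Two further issues. Your assertion that ``standard arguments force $M$ to be $3$-connected'' is unsupported: $\cS$ is not closed under $2$-sums, so knowing both sides of a putative $2$-separation are spike minors neither makes $M$ a spike minor nor obviously produces a smaller excluded minor. The paper never proves or uses $3$-connectivity of $M$; it works directly with simplicity, cosimplicity, and the triangle/triad/$U_{3,5}$ exclusions. And in the endgame, the competitors to $M\backslash f\cong P_7^*$ that must be ruled out are \emph{other} single-element deletions of rank-$4$ tipless spikes---with zero, one, or three traversals, or two traversals meeting in two elements---all of which \emph{are} spike minors, so observing that $P_7^-$ and $P_7^=$ are excluded minors for $\cS$ eliminates nothing. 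The paper instead proves a uniqueness statement about pairs lying in three distinct $4$-circuits, then that any two traversals of $M\backslash f$ meet in exactly one element and that there are at least two of them; your sketch would need to supply a concrete replacement for that machinery.
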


\begin{proof}
Since $P_7$ is a rank-$3$ tipped spike, it follows from Lemma~\ref{lem: p8} that $P_8$ is an excluded minor for $\cS$. Now, let $M$ be an arbitrary excluded minor for $\cS$ of rank and corank at least four. Since $\cS$ is closed under duality, $M^*$ is also such a matroid. For convenience, let $r=r(M)$ and $r^*=r^*(M)$. The proof of (\ref{simple}) is given in~\cite[Lemma 3.9]{may21}. However, for the sake of completeness, we include a proof here.

\begin{sublemma}
$M$ is simple and cosimple.
\label{simple}
\end{sublemma}

Suppose that $M$ has a loop $\ell$. If $M$ has at least two loops, then, by Lemma~\ref{lemma: spike degeneracy}, $M\backslash \ell$ is a restriction of $2U_{r, r+1}\oplus U_{0, n}$ for some positive integer $n$, and so $M\in \cS$, a contradiction. So $M$ has exactly one loop. By Lemma~\ref{lemma: spike degeneracy}, for all $e\in E(M)-\{\ell\}$, the matroid $M\backslash e$ is a restriction of $2U_{r, r+1}\oplus U_{0, 1}$. Thus, all parallel classes of $M$ have size at most two. Moreover, $M$ has at least one non-trivial parallel class $\{f,g\}$, as otherwise $r^* \leq 3$. Now, $M\backslash f$ is a restriction of $2U_{r, r+1}\oplus U_{0, 1}$, so $M$ is also such a restriction. This contradiction implies that $M$ has no loops.

Now suppose that $M$ has a parallel class $P$ of size at least two. Say $M$ has another non-trivial parallel class $P'$. Let $e\in E(M)-(P\cup P')$. By Lemma~\ref{lemma: spike degeneracy}, $M\backslash e$ is a restriction of $2U_{r, r+1}$. Thus $|P|=2$ and $|P'|=2$. In turn, this implies that all non-trivial parallel classes of $M$ have size two. If $e$ is in a non-trivial parallel class of $M$, then, since $M \backslash e$ is a restriction of $2U_{r,r+1}$, we have that $M$ is also a restriction of $2U_{r,r+1}$, a spike minor. It now follows that $P$ and $P'$ are the only non-trivial parallel classes of $M$. Since $r^*(M)\ge 4$, we have that $M\backslash e \cong \{2, 2\}$-$U_{r, r+1}$. Let $p' \in P'$. Since $M \backslash p'$ has only one non-trivial parallel class, Lemma~\ref{lemma: spike degeneracy} implies that $M \backslash p'$ is a restriction of an overloaded tipped spike. Since $r^* \geq 4$, there is a full leg $\{f,f'\}$ of $M \backslash p'$. Now, $r(P \cup \{f,f'\}) = 2$, so there exists $g \notin \cl(P \cup P' \cup \{f,f'\})$. The matroid $M \backslash g$ contains a triangle and two distinct non-trivial parallel classes. This contradicts Lemma~\ref{lemma: spike degeneracy}. Hence, $P$ is the only non-trivial parallel class of $M$.

Let $p\in P$. Since $r^* \geq 4$ and $M \backslash p$ has at most one non-trivial parallel class, the matroid $M \backslash p$ is not a restriction of $2U_{r,r+1}$. Thus, $M \backslash p$ is a restriction of an overloaded tipped spike. If $|P| \geq 3$, then $P-\{p\}$ is at the tip of $M \backslash p$. But this implies that $M$ is also a restriction of an overloaded tipped spike. So $|P| = 2$.

Let $P=\{p, q\}$ and let $e\in E(M)-P$. Since $r^* \geq 4$, the matroid $M\backslash e$ is a restriction of a tipped spike with $P$ at the tip and at least one full leg. Thus, there is a triangle $T$ of $M$ containing $p$. Similarly, let $f\in T - P$. Then $M\backslash f$ is a restriction of a tipped spike with $P$ at the tip and a triangle $T' \neq T$ containing $p$. In $M \backslash q$, the element $p$ is contained in the intersection of two triangles. Furthermore $r(M \backslash q) \geq 4$, so $M \backslash q$ is a tipped spike with tip $p$. But now $M$ is also a spike minor. Thus, $M$ is simple, and, by duality, $M$ is also cosimple.

\begin{sublemma}
\label{lem:no_triangles_or_triads_spikes}
$M$ has no triangles and no triads.
\end{sublemma}

Suppose that $M$ has a triangle, and let $e \in E(M)$ be an element outside of this triangle. Since $M \backslash e$ is simple, Lemma~\ref{lemma: spike degeneracy} implies that $M \backslash e$ is a restriction of a tipped spike. Moreover, $M \backslash e$ has a triangle, and has rank at least $4$, so $M \backslash e$ has a tip $t$. Now, $r^*(M\backslash e) \geq 3$, so $M\backslash e$ has at least two full legs. Thus, $t$ is contained in at least two triangles of $M$.

Consider $M/t$. Since $M/t$ contains at least two parallel pairs, Lemma~\ref{lemma: spike degeneracy} implies that $M / t$ is a restriction of $2U_{r-1,r}$. If $M / t$ is a restriction of $2U_{r-1,r-1}$, then $M / t$, and thus also $M$, contains series pairs. This contradiction implies that $\si(M / t) \cong U_{r-1,r}$. Since $M$ is cosimple, there is at most one parallel class of $M/t$ consisting of a single element, and all other parallel classes of $M/t$ have size two. 

Suppose there is a parallel class of $M / t$ consisting of a single element $t^*$, and let $\{x_1,y_1\},\{x_2,y_2\},\ldots,\{x_{r-1},y_{r-1}\}$ be the parallel pairs of $M/t$. For all $i \in \{1,2,\ldots,r-1\}$, the set $\{x_i,y_i,t^*\}$ is a triad of $M / t$, so also of $M$. Also, $\{x_i,y_i,t\}$ is a triangle of $M$, so, by Lemma~\ref{lem: tip_and_cotip}, $M$ is a spike with tip and cotip. This is a contradiction.

Otherwise, every parallel class of $M / t$ has two elements. Let $\{x_1,y_1\},\{x_2,y_2\},\ldots,\{x_r,y_r\}$ be these parallel classes. For all distinct $i,j \in \{1,2,\ldots,r\}$, the set $\{x_1,y_1,x_2,y_2\}$ is a cocircuit of $M / t$, so also a cocircuit of $M$. Since $r(M)\geq 4$, there exists some element $f\not\in \{x_i,y_i,x_j,y_j\}$. Since $\{t,x_i,y_i\}$ and $\{t,x_j,y_j\}$ are triangles of $M\backslash f$ and $r(M\backslash f)\ge 4$, the matroid $M\backslash f$ is a restriction of a rank-$r$  tipped spike with legs $\{x_i,y_i\}$ and $\{x_j,y_j\}$. Hence, $\{x_i,y_i,x_j,y_j\}$ is also a circuit of $M$. Thus, $M$ is a tipped spike. This contradiction implies that $M$ has no triangles and, by duality, no triads, so (\ref{lem:no_triangles_or_triads_spikes}) holds.

\begin{sublemma}
\label{lem:no_u35_spikes}
Neither $M$ nor $M^*$ has a $U_{3,5}$-restriction.
\end{sublemma}

Suppose that $M$ has a $U_{3,5}$-restriction. Then, since $r(M)\geq 4$, there is some element $x$ in $E(M)$ such that $M\backslash x$ also has a $U_{3,5}$-restriction. But, by Lemma \ref{lemma: spike degeneracy}, $M\backslash x$ has no such restriction as $r(M)\ge 4$. Therefore $M$ and, by duality, $M^*$ have no $U_{3, 5}$-restriction. This proves (\ref{lem:no_u35_spikes}).

Since both $M$ and $M^*$ are simple and have no triangles, (\ref{lem:no_u35_spikes}) is equivalent to saying that no $5$-element subset of $E(M)$ has rank or corank three.

\begin{sublemma}
\label{rank bound}
$\min\{r(M), r^*(M)\}\leq 4$. 
\end{sublemma}

Without loss of generality, suppose that $r(M)\geq r(M^*)\geq 5$, and let $z\in E(M)$. Since $M$ is simple and has no triangles, and $r(M^*)\ge 5$, Lemma~\ref{lemma: spike degeneracy} implies that $M\backslash z$ is a restriction of a rank-$r$ tipless spike. Since $r^*(M\backslash z)\geq 4$, it follows that $M\backslash z$ has at least $k$ full legs $L_1, L_2, \dots, L_{k}$, where $k=r^*(M)-1\ge 4$. Let $L_k=\{x_k, y_k\}$.

If $M\backslash z$ has two half legs $L$ and $L'$, then $L\cup L'$ is a series pair of $M\backslash z$, and so either $L\cup L'$ is a series pair of $M$ or $L\cup L'\cup \{z\}$ is a triad of $M$, a contradiction. Thus $M\backslash z$ has at most one half leg, and so $k\in \{r-1, r\}$.

Consider $M\backslash y_k$. Again by Lemma~\ref{lemma: spike degeneracy}, $M\backslash y_k$ is a rank-$r$ tipless spike. Since $r(M\backslash y_k)\ge 5$, a traversal of $M\backslash y_k$ has at least five elements, and so all $4$-circuits of $M\backslash y_k$ are unions of pairs of legs. Let $i \in \{1,2,\ldots,r-1\}$. Since $k-1 \geq 3$, there exist distinct $j, k \in \{1,2,\ldots,r-1\} - \{i\}$. Both $L_i \cup L_j$ and $L_i \cup L_j$ are $4$-element circuits of $M \backslash y_k$, so each is a union of two legs. The only possibility is that $L_i$ is a leg of $M \backslash y_k$. More generally, each of $L_1, L_2, \ldots, L_{k-1}$ is a leg of $M \backslash y_k$.

If $\{x_k, z\}$ is a leg of $M\backslash y_k$, then $L_{k-1}\cup \{x_k, z\}$ is a $4$-circuit of $M$. But $L_{k-1}\cup \{x_k, y_k\}$ is also a $4$-circuit of $M$ and so, as $M$ has no triangles, $L_{k-1}\cup \{x_k, y_k, z\}$ is a $U_{3, 5}$-restriction of $M$, contradicting (\ref{lem:no_u35_spikes}). Thus, $\{x_k, z\}$ is not a leg of $M\backslash y_k$. In turn, this implies that $k\neq r$; otherwise, $M\backslash y_k$ is not a spike minor. Therefore $k=r-1$. Let $x_r$ be the unique element of a $E(M) - L_1 \cup L_2 \cup \cdots L_k \cup \{z\}$. Now, $M \backslash y_k$ has a full leg containing two of the elements $x_k$, $x_r$, and $z$. This leg is not $\{x_k, z\}$, and, since $M \backslash z$ does not have a $4$-element circuit containing $\{x_k, x_r\}$, it is not $\{x_k,x_r\}$ either. Thus, $\{x_r, z\}$ is a leg of $M \backslash y_k$. 

The set $\{x_k,y_k,x_r\}$ is a triad of $M \backslash z$, so $\{x_k,y_k,x_r,z\}$ is a cocircuit of $M$. Suppose $r(\{x_k, y_k, x_r, z\})=4$. Then, as $r(L_i\cup \{x_r, z\})=3$ and $r(L_i\cup L_k)=3$ for all $i\in \{1, 2, \ldots, k-1\}$, it follows by submodularity that
\begin{align*}
r(L_i\cup L_k\cup \{x_r, z\}) & \le r(L_i\cup \{x_r, z\}) + r(L_i\cup L_k) - r(L_i) \\
& = 4.
\end{align*}
But then $L_i\subseteq \cl(L_{r-1}\cup \{x_r, z\})$ for all $i$, and so $r(M)=4$, a contradiction. Therefore $\{x_{r-1}, y_{r-1}, x_r, z\}$ is a circuit of $M$. Hence, by Lemma~\ref{pairs}, $M$ is a rank-$r$ tipless spike. This last contradiction proves (\ref{rank bound}).

\begin{sublemma}
\label{r4minors} 
$r(M)=r^*(M)=4$. Furthermore, for all $e\in E(M)$, we have $M/e$ is a tipped $3$-spike and, dually, $M\backslash e$ is a single-element deletion of a rank-$4$ tipless spike.
\end{sublemma}

By duality and (\ref{rank bound}), we may assume that $r(M)=4$. As $M$ has no triangles, it follows by Lemma~\ref{lemma: spike degeneracy} that $M/e$ is a simple restriction of a rank-$3$ tipped spike. This implies that $r^*(M) \leq 4$, so $r^*(M)=4$. This proves (\ref{r4minors}).

\begin{sublemma}
\label{cor:buddy_lemma}
For each $x\in E(M)$, there exists a unique element $y\in E(M)-\{x\}$ with the property that $\{x, y\}$ is a subset of three distinct $4$-circuits of $M$.
\end{sublemma}

The existence of $y$ follows from the fact that $M/x$ is a tipped rank-$3$ spike. To establish uniqueness, let $E(M)=\{1, 2, 3, 4, 5, 6, 7, 8\}$, and suppose to the contrary that $\{1, 2\}$ and $\{1, 3\}$ are each contained in three distinct $4$-circuits of $M$. Then, by considering $M/1$, it follows, up to labelling, that $M$ has the $4$-circuits
$$\{1, 2, 3, 4\},\, \{1, 2, 5, 6\},\, \{1, 2, 7, 8\},\, \{1, 3, 5, 7\}, \{1, 3, 6, 8\}.$$
We aim to construct enough $4$-circuits of $M$ to show there exists a partition of $E(M)$ into $2$-element subsets such that the union of any two of these subsets is a circuit of $M$. Since $M$ has no triangles or $U_{3,5}$-restrictions, this suffices, by Lemma~\ref{pairs}, to show that $M$ is a rank-$4$ spike, thereby obtaining a contradiction.

By (\ref{r4minors}), $M\backslash 1$ is a single-element deletion of a rank-$4$ tipless spike. We break the rest of the proof of (\ref{cor:buddy_lemma}) into three cases: (i) either $\{2\}$ or $\{3\}$ is a half leg of $M\backslash 1$; (ii) $\{2, 3\}$ is a full leg of $M\backslash 1$; and (iii) the elements $2$ and $3$ are on distinct full legs of $M\backslash 1$. For convenience, we denote, for all $i\in\{2, 3, \ldots, 8\}$, the leg of $M\backslash 1$ containing $i$ by $L_i$.

Consider (i). Without loss of generality, we may assume that $\{2\}$ is a half leg of $M\backslash 1$. If the elements $3$ and $4$ are on distinct legs of $M\backslash 1$, then there is a $4$-circuit of $M\backslash 1$ containing either $\{3, 5, 7\}$ or $\{3, 6, 8\}$. But $\{1, 3, 5, 7\}$ and $\{1, 3, 6, 8\}$ are $4$-circuits of $M$, and so $M$ has a $U_{3, 5}$-restriction, contradicting~(\ref{lem:no_u35_spikes}). Hence $\{3, 4\}$ is a leg of $M\backslash 1$.

Since $\{1, 3, 5, 7\}$ is a circuit of $M$ and $M$ has no $U_{3, 5}$-restriction, $\{5,7\}$ is not a leg of $M\backslash 1$. Hence the set of full legs of $M\backslash 1$ is either $\{\{3, 4\}, \{5, 6\}, \{7, 8\}\}$ or $\{\{3, 4\}, \{5, 8\}, \{6, 7\}\}$. If the first possibility holds, then
$$\{\{1, 2\}, \{3, 4\}, \{5, 6\}, \{7, 8\}\}$$
is a partition of $E(M)$ whose pairwise unions are all $4$-circuits of $M$, and so $M$ is a rank-$4$ spike, a contradiction. Thus the second possibility holds and the set of $4$-circuits of $M$ include
\begin{align*}
\{1, 2, 3, 4\},\, \{1, 2, 5, 6\},\, \{1, 2, 7, 8\},\, \{1, 3, 5, 7\}, \\
\{1, 3, 6, 8\},\, \{3, 4, 5, 8\},\, \{3, 4, 6, 7\},\, \{5, 6, 7, 8\}.
\end{align*}

Now consider $M\backslash 5$. First, observe that a union of two legs of $M \backslash 5$ intersects every other $4$-circuit of $M \backslash 5$ in exactly two elements. In particular, this implies that $\{1,2,7,8\}$ and $\{3,4,6,7\}$ are traversals. The element in the half leg of $M \backslash 5$ appears in every traversal, so this element is $7$. Since $\{1,2,3,4\}$ and $\{1,3,6,8\}$ do not contain $7$, these circuits are unions of two legs. Hence, $M \backslash 5$ has full legs $\{1,3\}$, $\{2,4\}$, and $\{6,8\}$. Thus, $\{2,4,6,8\}$ is also a $4$-circuit of $M$.

Similarly, $\{1,2,7,8\}$ and $\{3,4,5,8\}$ are traversals of $M \backslash 6$, and $\{8\}$ is the half leg of $M \backslash 6$. The circuits $\{1,2,3,4\}$ and $\{1,3,5,7\}$ are unions of two legs, so $M \backslash 6$ has full legs $\{1,3\}$, $\{2,4\}$, and $\{5,7\}$. Therefore, $\{2,4,5,7\}$ is a circuit of $M$. But this implies that $M$ is a rank-$4$ spike whose set of legs is $\{\{1,3\},\{2,4\},\{5,7\},\{6,8\}\}$, a contradiction. Thus, neither $\{2\}$ nor $\{3\}$ is a half leg of $M \backslash 1$.

Consider (ii), and suppose that $\{2, 3\}$ is a full leg of $M\backslash 1$. If $L_4$ is full leg of $M\backslash 1$, then, as $\{1, 2, 3, 4\}$ is a $4$-circuit of $M$, we have that $\{1, 2, 3\}\cup L_4$ induces a $U_{3, 5}$-restriction of $M$, contradicting (\ref{lem:no_u35_spikes}). Therefore $\{4\}$ is the half leg of $M\backslash 1$. If either $\{6, 8\}$ or $\{7, 8\}$ is a full leg of $M\backslash 1$, then, as $\{1, 2, 7, 8\}$ and $\{1, 3, 6, 8\}$ are $4$-circuits of $M$, it follows that either $\{1, 2, 3, 6, 8\}$ or $\{1, 2, 3, 7, 8\}$ induce a $U_{3,5}$-restriction of $M$. This contradiction to (\ref{lem:no_u35_spikes}) implies that $\{5, 8\}$ and $\{6, 7\}$ are legs of $M\backslash 1$. The list of $4$-circuits of $M$ now includes
\begin{align*}
\{1, 2, 3, 4\},\, \{1, 2, 5, 6\},\, \{1, 2, 7, 8\},\, \{1,3,5,7\},\\
\{1, 3, 6, 8\},\, \{2, 3, 5, 8\},\, \{2, 3, 6, 7\},\, \{5, 6, 7, 8\}.
\end{align*}

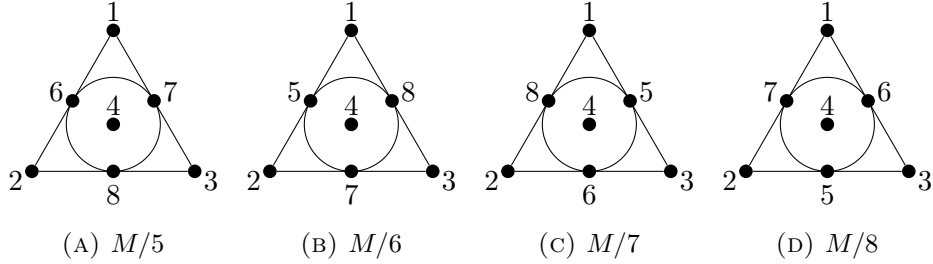
\begin{figure}
	\centering
	\begin{subfigure}{0.24\textwidth}
	\centering
	\begin{tikzpicture}[scale=1.25]
	\begin{scope}[every node/.style=element]
	\node (x1) at (-30:1) {};
	\node (x2) at (90:1) {};
	\node (x3) at (-150:1) {};
	\coordinate (c) at (0,0) {};	
	\node (y1) at ($(x1)!0.5!(x2)$) {};
	\node (y2) at ($(x2)!0.5!(x3)$) {};
	\node (y3) at ($(x3)!0.5!(x1)$) {};
	\draw (x1) to (x2) to (x3) to (x1);
	\node (c) at (0,0) {};
	\draw circle (0.5);
	\end{scope}
	\begin{scope}
	\node (x1) at (-30:1.2) {$3$};
	\node (x2) at (90:1.2) {$1$};
	\node (x3) at (-150:1.2) {$2$};
	\coordinate (y1) at ($(x1)!0.5!(x2)$) {};
	\node at ($(y1) + (30:0.1)$) {$7$};
	\coordinate (y2) at ($(x2)!0.5!(x3)$) {};
	\node at ($(y2) + (150:0.1)$) {$6$};
	\coordinate (y3) at ($(x3)!0.5!(x1)$) {};
	\node at ($(y3) + (-90:0.15)$) {$8$};
	\node (c) at (0,0.2) {$4$};
	\end{scope}
	\end{tikzpicture}
	\subcaption{$M / 5$}
	\end{subfigure}
	\begin{subfigure}{0.24\textwidth}
	\centering
	\begin{tikzpicture}[scale=1.25]
	\begin{scope}[every node/.style=element, shift={(2.5,0)}]
	\node (x1) at (-30:1) {};
	\node (x2) at (90:1) {};
	\node (x3) at (-150:1) {};
	\coordinate (c) at (0,0) {};	
	\node (y1) at ($(x1)!0.5!(x2)$) {};
	\node (y2) at ($(x2)!0.5!(x3)$) {};
	\node (y3) at ($(x3)!0.5!(x1)$) {};
	\draw (x1) to (x2) to (x3) to (x1);
	\node (c) at (0,0) {};
	\draw circle (0.5);
	\end{scope}
	\begin{scope}[shift={(2.5,0)}]
	\node (x1) at (-30:1.2) {$3$};
	\node (x2) at (90:1.2) {$1$};
	\node (x3) at (-150:1.2) {$2$};
	\coordinate (y1) at ($(x1)!0.5!(x2)$) {};
	\node at ($(y1) + (30:0.1)$) {$8$};
	\coordinate (y2) at ($(x2)!0.5!(x3)$) {};
	\node at ($(y2) + (150:0.1)$) {$5$};
	\coordinate (y3) at ($(x3)!0.5!(x1)$) {};
	\node at ($(y3) + (-90:0.15)$) {$7$};
	\node (c) at (0,0.2) {$4$};
	\end{scope}
	\end{tikzpicture}
	\subcaption{$M / 6$}
	\end{subfigure}
	\begin{subfigure} {0.24\textwidth}
	\centering
	\begin{tikzpicture}[scale=1.25]
	\begin{scope}[every node/.style=element, shift={(5,0)}]
	\node (x1) at (-30:1) {};
	\node (x2) at (90:1) {};
	\node (x3) at (-150:1) {};
	\coordinate (c) at (0,0) {};	
	\node (y1) at ($(x1)!0.5!(x2)$) {};
	\node (y2) at ($(x2)!0.5!(x3)$) {};
	\node (y3) at ($(x3)!0.5!(x1)$) {};
	\draw (x1) to (x2) to (x3) to (x1);
	\node (c) at (0,0) {};
	\draw circle (0.5);
	\end{scope}
	\begin{scope}[shift={(5,0)}]
	\node (x1) at (-30:1.2) {$3$};
	\node (x2) at (90:1.2) {$1$};
	\node (x3) at (-150:1.2) {$2$};
	\coordinate (y1) at ($(x1)!0.5!(x2)$) {};
	\node at ($(y1) + (30:0.1)$) {$5$};
	\coordinate (y2) at ($(x2)!0.5!(x3)$) {};
	\node at ($(y2) + (150:0.1)$) {$8$};
	\coordinate (y3) at ($(x3)!0.5!(x1)$) {};
	\node at ($(y3) + (-90:0.15)$) {$6$};
	\node (c) at (0,0.2) {$4$};
	\end{scope}
	\end{tikzpicture}
	\subcaption{$M / 7$}
	\end{subfigure}
	\begin{subfigure}{0.24\textwidth}
	\centering
	\begin{tikzpicture}[scale=1.25]
	\begin{scope}[every node/.style=element, shift={(7.5,0)}]
	\node (x1) at (-30:1) {};
	\node (x2) at (90:1) {};
	\node (x3) at (-150:1) {};
	\coordinate (c) at (0,0) {};	
	\node (y1) at ($(x1)!0.5!(x2)$) {};
	\node (y2) at ($(x2)!0.5!(x3)$) {};
	\node (y3) at ($(x3)!0.5!(x1)$) {};
	\draw (x1) to (x2) to (x3) to (x1);
	\node (c) at (0,0) {};
	\draw circle (0.5);
	\end{scope}
	\begin{scope}[shift={(7.5,0)}]
	\node (x1) at (-30:1.2) {$3$};
	\node (x2) at (90:1.2) {$1$};
	\node (x3) at (-150:1.2) {$2$};
	\coordinate (y1) at ($(x1)!0.5!(x2)$) {};
	\node at ($(y1) + (30:0.1)$) {$6$};
	\coordinate (y2) at ($(x2)!0.5!(x3)$) {};
	\node at ($(y2) + (150:0.1)$) {$7$};
	\coordinate (y3) at ($(x3)!0.5!(x1)$) {};
	\node at ($(y3) + (-90:0.15)$) {$5$};
	\node (c) at (0,0.2) {$4$};
	\end{scope}
	\end{tikzpicture}
	\subcaption{$M / 8$}
	\end{subfigure}
	\caption{Known circuits of $M/5$, $M/6$, $M/7$, and $M/8$ in the proof of (\ref{cor:buddy_lemma}).}
	\label{fig: Em1}
\end{figure}

Consider each of the contractions $M/5$, $M/6$, $M/7$, and $M/8$. Figure~\ref{fig: Em1} shows geometric representations of these matroids, including only the $3$-element circuits implied by the above list. By (\ref{r4minors}), each matroid is a rank-$3$ tipped spike, and so there is at least one additional $3$-element circuit in each containing $4$. Thus, the rank-$3$ tipped spike $M/5$ implies that at least one of $\{1, 4, 5, 8\}$, $\{2, 4, 5, 7\}$, and $\{3, 4, 5, 6\}$ is a $4$-circuit of $M$. Similarly, $M/6$ implies that at least one of $\{1, 4, 6, 7\}$, $\{2, 4, 6, 8\}$, and $\{3, 4, 5, 6\}$ is a $4$-circuit of $M$, the matroid $M/7$ implies that at least one of $\{1, 4, 6, 7\}$, $\{2, 4, 5, 7\}$,  and $\{3, 4, 7, 8\}$ is a $4$-circuit of $M$, and $M/8$ implies that at least one of $\{1, 4, 5, 8\}$, $\{2, 4, 6, 8\}$, and $\{3, 4, 7, 8\}$ is a $4$-circuit of $M$.

If $\{1, 4, 5, 8\}$ and $\{1, 4, 6, 7\}$ are $4$-circuits of $M$, then $M$ is a rank-$4$ spike whose set of legs is $\{\{1, 4\}, \{2, 3\}, \{5, 8\}, \{6, 7\}\}$, a contradiction.  Similarly, if $\{2, 4, 6, 8\}$ and $\{2, 4, 5, 7\}$ are $4$-circuits of $M$, then $M$ is a rank-$4$ spike whose set of legs is $\{\{1, 3\}, \{2, 4\}, \{5, 7\}, \{6, 8\}\}$ and, if $\{3, 4, 5, 6\}$ and $\{3, 4, 7, 8\}$ are $4$-circuits of $M$, then $M$ is a rank-$4$ spike whose set of legs is $\{\{1, 2\}, \{3, 4\}, \{5, 6\}, \{7, 8\}\}$. We next check  possible combinations of $4$-circuits that avoid these three pairs of $4$-element sets.

If $\{1, 4, 5, 8\}$ is a $4$-circuit of $M$, then either $\{2, 4, 6, 8\}$ and $\{3, 4, 7, 8\}$ are $4$-circuits of $M$, or $\{3, 4, 5, 6\}$ and $\{2, 4, 5, 7\}$ are $4$-circuits of $M$ to ensure that both $M/6$ and $M/7$ are rank-$3$ tipped spikes. If the first instance holds, then $M$ is a rank-$4$ spike whose set of legs is $\{\{1, 5\}, \{2, 6\}, \{3, 7\}, \{4, 8\}\}$ and, if the second instance holds, then $M$ is a rank-$4$ spike whose set of legs is $\{\{1, 8\}, \{2, 7\}, \{3, 6\}, \{4, 5\}\}$. Thus, $\{1, 4, 5, 8\}$ is not a $4$-circuit of $M$.

If $\{3, 4, 7, 8\}$ is a $4$-circuit of $M$, then $\{2, 4, 5, 7\}$ and $\{1, 4, 6, 7\}$ are also $4$-circuits of $M$ to ensure that both $M/5$ and $M/6$ are rank-$3$ tipped spikes. But then $M$ is a rank-$4$ spike whose set of legs is $\{\{1, 6\}, \{2, 5\}, \{3, 8\}, \{4, 7\}\}$, a contradiction. Therefore, $\{3, 4, 7, 8\}$ is not a $4$-circuit of $M$. Hence, to ensure $M/8$ is a rank-$3$ tipped spike, $\{2, 4, 6, 8\}$ is a $4$-circuit of $M$, in which case $\{3, 4, 5, 6\}$ and $\{1, 4, 6, 7\}$ are also $4$-circuits of $M$ so that both $M/5$ and $M/7$ are rank-$3$ tipped spikes. In this last instance, $M$ is a rank-$4$ spike whose set of legs is $\{\{1, 7\}, \{2, 8\}, \{3, 5\}, \{4, 6\}\}$. This last contradiction implies that $\{2, 3\}$ is not a leg of $M\backslash 1$.

Lastly, consider (iii), and suppose that elements $2$ and $3$ are on distinct full legs of $M\backslash 1$. First, assume that $\{4\}$ is the half leg of $M\backslash 1$. We consider the possibilities for the element on the same leg as $2$. This element is not $3$ or $4$. If $\{2,5\}$ or $\{2,6\}$ is a leg of $M \backslash 1$ then the circuit $\{1,2,5,6\}$ implies that either $L_2 \cup L_5 \cup \{1\}$ or $L_2 \cup L_6 \cup \{1\}$ induces a $U_{3,5}$-restriction of $M$, a contradiction. In the same way, the circuit $\{1,2,7,8\}$ implies that neither $\{2,7\}$ nor $\{2,8\}$ is a leg of $M \backslash 1$. Thus, there are no possible element which can be on the same leg as $2$, and so $\{4\}$ is not the half leg of $M \backslash 1$.

If either $\{2, 4\}$ or $\{3, 4\}$ is a leg of $M\backslash 1$, then $L_2\cup L_3\cup \{1\}$ induces a $U_{3, 5}$-restriction of $M$ with the circuit $\{1,2,3,4\}$. Therefore $2$, $3$, and $4$ are on distinct full legs of $M\backslash x$. Observing that the elements $5$, $6$, $7$, and $8$ are not distinguishable in our initial assignment of circuits, we may assume without loss of generality that $\{6\}$ is the half leg of $M\backslash 1$. Moreover, if $\{2, 3, 7, 8\}$, $\{2, 3, 5, 7\}$, or $\{2, 4, 7, 8\}$ is a $4$-circuit of $M$, then $r(\{1, 2, 3, 7, 8\})=3$, $r(\{1, 2, 3, 5, 7\})=3$, or $r(\{1, 2, 4, 7, 8\})=3$, a contradiction. Hence $M\backslash 1$ has legs $\{2, 5\}$, $\{3, 8\}$, and $\{4, 7\}$, and the list of $4$-circuits of $M$ now include
\begin{align*}
\{1, 2, 3, 4\},\, \{1, 2, 5, 6\},\, \{1, 2, 7, 8\},\, \{1, 3, 5, 7\}, \\
\{1, 3, 6, 8\},\, \{2, 3, 5, 8\},\, \{2, 4, 5, 7\},\, \{3, 4, 7, 8\}.
\end{align*}

Consider $M/4$. The geometric representation of $M/4$ is shown in Figure~\ref{subfig: m/4}, including only the $3$-element circuits implied by the above list. Since $M/4$ is a rank-$3$ tipped spike, it has at least one of the circuits $\{1,6,7\}$, $\{2,6,8\}$, and $\{3,5,6\}$. Hence, at least one of $\{1, 4, 6, 7\}$, $\{2, 4, 6, 8\}$, and $\{3, 4, 5, 6\}$ is a $4$-circuit of $M$. If $\{1, 4, 6, 7\}$ is a $4$-circuit of $M$, then $M$ is a rank-$4$ spike whose set of legs is $\{\{1, 6\}, \{2, 5\}, \{3, 8\}, \{4, 7\}\}$. Hence, either $\{2, 4, 6, 8\}$ or $\{3, 4, 5, 6\}$ is a $4$-circuit of $M$. Consider $M/6$. Depending on whether $\{2, 4, 6, 8\}$ or $\{3, 4, 5, 6\}$ is a $4$-circuit of $M$, the possible geometric representations of $M/6$ are shown in Figures~\ref{subfig: m/6 2468} and \ref{subfig: m/6 3456}, respectively (including only the known $3$-element circuits). Regardless of whether $\{2, 4, 6, 8\}$ or $\{3, 4, 5, 6\}$ is a $4$-circuit of $M$, either $\{2,3,7\}$ or $\{5,7,8\}$ is a circuit of $M / 6$, so either $\{2, 3, 6, 7\}$ or $\{5, 6, 7, 8\}$ is a $4$-circuit of $M$. If both $\{2, 4, 6, 8\}$ and $\{5, 6, 7, 8\}$ are $4$-circuits of $M$, then $M$ is a rank-$4$ spike whose set of legs is $\{\{1, 3\}, \{2, 4\}, \{5, 7\}, \{6, 8\}\}$. Furthermore, if $\{3, 4, 5, 6\}$ and $\{5, 6, 7, 8\}$ are $4$-circuits of $M$, then $M$ is a rank-$4$ spike whose set of legs is $\{\{1, 2\}, \{3, 4\}, \{5, 6\}, \{7, 8\}\}$. Therefore, $\{5, 6, 7, 8\}$ is not a $4$-circuit of $M$, and so $\{2, 3, 6, 7\}$ is a $4$-circuit of $M$. The list of $4$-circuits of $M$ now include
\begin{align*}
& \{1, 2, 3, 4\},\, \{1, 2, 5, 6\},\, \{1, 2, 7, 8\},\, \{1, 3, 5, 7\},\, \{1, 3, 6, 8\}, \\
& \{2, 3, 5, 8\},\, \{2, 4, 5, 7\},\, \{3, 4, 7, 8\},\, \{2,3,6,7\},
\end{align*}
together with at least one of $\{2, 4, 6, 8\}$ and $\{3, 4, 5, 6\}$.

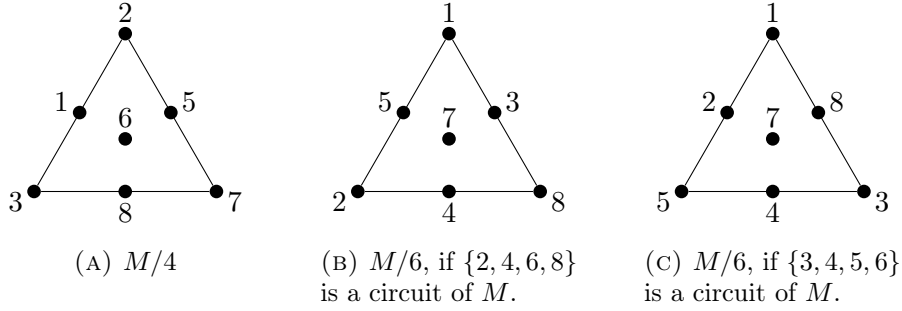
\begin{figure}
	\centering
	\begin{subfigure}[t]{0.27\textwidth}
	\centering
	\begin{tikzpicture}[scale=1.4]
	\centering
	\begin{scope}[every node/.style=element]
	\node (x1) at (-30:1) {};
	\node (x2) at (90:1) {};
	\node (x3) at (-150:1) {};
	\coordinate (c) at (0,0) {};	
	\node (y1) at ($(x1)!0.5!(x2)$) {};
	\node (y2) at ($(x2)!0.5!(x3)$) {};
	\node (y3) at ($(x3)!0.5!(x1)$) {};
	\draw (x1) to (x2) to (x3) to (x1);
	\node (c) at (0,0) {};
	\end{scope}
	\begin{scope}
	\node (x1) at (-30:1.2) {$7$};
	\node (x2) at (90:1.2) {$2$};
	\node (x3) at (-150:1.2) {$3$};
	\coordinate (y1) at ($(x1)!0.5!(x2)$) {};
	\node at ($(y1) + (30:0.1)$) {$5$};
	\coordinate (y2) at ($(x2)!0.5!(x3)$) {};
	\node at ($(y2) + (150:0.1)$) {$1$};
	\coordinate (y3) at ($(x3)!0.5!(x1)$) {};
	\node at ($(y3) + (-90:0.1)$) {$8$};
	\node (c) at (0,0.2) {$6$};
	\end{scope}
	\end{tikzpicture}
	\subcaption{$M / 4$} \label{subfig: m/4}
	\end{subfigure}
	\hspace{0.05\textwidth}
	\begin{subfigure}[t]{0.27\textwidth}
	\centering
	\begin{tikzpicture} [scale = 1.4]
	\begin{scope}[every node/.style=element, shift={(6,0)}]
	\node (x1) at (-30:1) {};
	\node (x2) at (90:1) {};
	\node (x3) at (-150:1) {};
	\coordinate (c) at (0,0) {};	
	\node (y1) at ($(x1)!0.5!(x2)$) {};
	\node (y2) at ($(x2)!0.5!(x3)$) {};
	\node (y3) at ($(x3)!0.5!(x1)$) {};
	\draw (x1) to (x2) to (x3) to (x1);
	\node (c) at (0,0) {};
	\end{scope}
	\begin{scope}[shift={(6,0)}]
	\node (x1) at (-30:1.2) {$8$};
	\node (x2) at (90:1.2) {$1$};
	\node (x3) at (-150:1.2) {$2$};
	\coordinate (y1) at ($(x1)!0.5!(x2)$) {};
	\node at ($(y1) + (30:0.1)$) {$3$};
	\coordinate (y2) at ($(x2)!0.5!(x3)$) {};
	\node at ($(y2) + (150:0.1)$) {$5$};
	\coordinate (y3) at ($(x3)!0.5!(x1)$) {};
	\node at ($(y3) + (-90:0.1)$) {$4$};
	\node (c) at (0,0.2) {$7$};
	\end{scope}
	\end{tikzpicture}
	\subcaption{$M / 6$, if $\{2,4,6,8\}$ is a circuit of $M$.} \label{subfig: m/6 2468}
	\end{subfigure}
	\hspace{0.05\textwidth}
	\begin{subfigure}[t]{0.27\textwidth}
	\centering
	\begin{tikzpicture} [scale = 1.4]
	\begin{scope}[every node/.style=element, shift={(3,0)}]
	\node (x1) at (-30:1) {};
	\node (x2) at (90:1) {};
	\node (x3) at (-150:1) {};
	\coordinate (c) at (0,0) {};	
	\node (y1) at ($(x1)!0.5!(x2)$) {};
	\node (y2) at ($(x2)!0.5!(x3)$) {};
	\node (y3) at ($(x3)!0.5!(x1)$) {};
	\draw (x1) to (x2) to (x3) to (x1);
	\node (c) at (0,0) {};
	\end{scope}
	\begin{scope}[shift={(3,0)}]
	\node (x1) at (-30:1.2) {$3$};
	\node (x2) at (90:1.2) {$1$};
	\node (x3) at (-150:1.2) {$5$};
	\coordinate (y1) at ($(x1)!0.5!(x2)$) {};
	\node at ($(y1) + (30:0.1)$) {$8$};
	\coordinate (y2) at ($(x2)!0.5!(x3)$) {};
	\node at ($(y2) + (150:0.1)$) {$2$};
	\coordinate (y3) at ($(x3)!0.5!(x1)$) {};
	\node at ($(y3) + (-90:0.1)$) {$4$};
	\node (c) at (0,0.2) {$7$};
	\end{scope}
	\end{tikzpicture}
	\subcaption{$M / 6$, if $\{3,4,5,6\}$ is a circuit of $M$.} \label{subfig: m/6 3456}
	\end{subfigure}
	\caption{Known circuits of $M / 4$ and $M / 6$ in the proof of~(\ref{cor:buddy_lemma}).}
	\label{fig:case3a}
\end{figure}

Now consider $M\backslash 2$. For each choice of $L_1$, there is exactly one possibility for $M\backslash 2$. In particular, if $L_1$ is the half leg, then the $4$-circuit $\{1, 3, 5, 7\}$ is a traversal of $M\backslash 2$ and so, as $\{1, 3, 6, 8\}$ and $\{3, 4, 7, 8\}$ are $4$-circuits of $M$, it is easily checked that $M\backslash 2$ has legs $\{1\}$, $\{3, 4\}$, $\{5, 6\}$, $\{7, 8\}$. A similar analysis shows that if $\{1, 3\}$, $\{1, 4\}$, $\{1, 5\}$, $\{1, 6\}$, $\{1, 7\}$ or $\{1, 8\}$ is a leg of $M\backslash 2$, then the set of legs of $M\backslash 2$ is
\begin{align*}
\{\{1, 3\}, \{5, 7\}, \{6, 8\}, \{4\}\},\, \{\{1, 4\}, \{3\}, \{5, 8\}, \{6, 7\}\}, \\
\{\{1, 5\}, \{3, 7\}, \{6\}, \{4, 8\}\},\, \{\{1, 6\}, \{3, 8\}, \{5\}, \{4, 7\}\}, \\
\{\{1, 7\}, \{3, 5\}, \{4, 6\}, \{8\}\},\, \{\{1, 8\}, \{3, 6\}, \{4, 5\}, \{7\}\},
\end{align*}
respectively. If $L_1\in \{\{1\}, \{1, 3\}, \{1, 4\}, \{1, 6\}, \{1, 7\}\}$, then either $\{1, 4, 6, 7\}$ or $\{5, 6, 7, 8\}$ is a $4$-circuit of $M$, and we have previously seen that $M$ is a rank-$4$ spike. Otherwise, $L_1\in \{\{1, 5\}, \{1, 8\}\}$, in which case $\{1, 4, 5, 8\}$ is a $4$-circuit of $M$. If $\{2, 4, 6, 8\}$ is a $4$-circuit of $M$, then $M$ is a rank-$4$ spike whose set of legs is $\{\{1, 5\}, \{2, 6\}, \{3, 7\}, \{4, 8\}\}$. On the other hand, if $\{3, 4, 5, 6\}$ is a $4$-circuit of $M$, then $M$ is a rank-$4$ spike whose set of legs is $\{\{1, 8\}, \{2, 7\}, \{3, 6\}, \{4, 5\}\}$. This last contradiction proves (\ref{cor:buddy_lemma}).

\begin{sublemma}
\label{lem:unique_traversal_intersection}
For each $x \in E(M)$, any two distinct traversals of $M\backslash x$ meet in exactly one element.
\end{sublemma}

Let $x\in E(M)$, and consider $M\backslash x$. Since $M\backslash x$ is a restriction of a rank-$4$ (tipless) spike, $M\backslash x$ has exactly three full legs. Denote the legs of $M\backslash x$ by $\{a, b\}$, $\{c, d\}$, $\{e, f\}$, and $\{y\}$. Each traversal of $M \backslash x$ contains $y$, so any two distinct traversals meet in at least one element. Suppose $M \backslash x$ has two distinct traversals that meet in two elements. We may assume these traversals are $\{a,c,e,y\}$ and $\{b,d,e,y\}$. Since $\{a, b, c, d\}$, $\{a, b, e, f\}$, and $\{c, d, e, f\}$ are $4$-circuits of $M$, the sets $\{x, y, e, f\}$, $\{x, y, c, d\}$, and $\{x, y, a, b\}$ are hyperplanes of $M^*$. In turn, by (\ref{lem:no_triangles_or_triads_spikes}), this implies that these three hyperplanes are all $4$-circuits of $M^*$. Additionally, the traversals $\{a,c,e,y\}$ and $\{b,d,e,y\}$ imply that $\{x, f, b, d\}$ and $\{x, f, a, c\}$ are hyperplanes of $M^*$, and thus $4$-circuits of $M^*$. But then $x$ is in three distinct $4$-circuits of $M^*$ containing $y$ as well as three distinct $4$-circuits of $M^*$ containing $f$, contradicting the dual of (\ref{cor:buddy_lemma}). This proves (\ref{lem:unique_traversal_intersection}).

\begin{sublemma}
\label{lem:spike_representations}
If $x\in E(M)$, then there is a unique choice for the set of legs of $M\backslash x$. Furthermore, each of the following properties hold:
\begin{enumerate}[{\rm (i)}]
\item If $x\in E(M)$, then there is a unique element $y\in E(M)$ such that $\{x\}$ is the half leg of $M\backslash y$ and $\{y\}$ is the half leg $M\backslash x$.

\item If $\{x', y'\}\subset E(M)$, then there are at most two elements $z_1, z_2\in E(M)$ such that $\{x', y'\}$ is a full leg of $M\backslash z_1$ and $M\backslash z_2$.
\end{enumerate}
\end{sublemma}

Let $x\in E(M)$ and suppose that a set of legs for $M\backslash x$ is $\{\{a, b\}, \{c, d\}, \{e, f\}, \{y\}\}$. If there is another choice for a set of legs of $M\backslash x$, then exactly two of the $4$-circuits $\{a, b, c, d\}$, $\{c, d, e, f\}$, and $\{a, b, e, f\}$ are traversals of $M\backslash x$ in this other choice, contradicting (\ref{lem:unique_traversal_intersection}). Thus the set of legs of $M\backslash x$ is unique.

To prove (i), observe that $\{x, y, e, f\}$, $\{x, y, c, d\}$, and $\{x, y, a, b\}$ are hyperplanes of $M^*$ and so, by (\ref{lem:no_triangles_or_triads_spikes}), $4$-circuits of $M^*$. Thus $y$ is in three distinct $4$-circuits of $M^*$ containing $x$. Consider $M\backslash y$. If $\{z\}$ is the half leg of $M\backslash y$, where $z\in E(M)-\{y\}$, then, by applying the same argument, $z$ is in three distinct $4$-circuits of $M^*$ containing $y$. By (\ref{cor:buddy_lemma}), we deduce that $z=x$, thereby proving (i). 

Lastly, we prove (ii). Take a pair which is a full leg of some single-element deletion of $M$. We may assume this pair is $\{a,b\}$. Suppose there is a single-element deletion of $M$ other than $M \backslash x$ and $M \backslash y$ for which $\{a,b\}$ is a full leg. Without loss of generality, let this single-element deletion be $M \backslash c$. Since $\{a, b, e, f\}$ is a $4$-circuit of $M\backslash c$, the set $\{e, f\}$ is a leg of $M\backslash c$. Furthermore, by~(i), neither $\{x\}$ nor $\{y\}$ is the half leg of $M\backslash c$, and so $\{x, y\}$ is the final full leg of $M\backslash c$, and $\{d\}$ is the half leg of $M\backslash c$. Now consider $M\backslash e$. Since $\{a, b, c, d\}$ and $\{a, b, x, y\}$ are $4$-circuits of $M\backslash e$, it follows by (\ref{lem:unique_traversal_intersection}) that neither circuit is a traversal of $M\backslash e$. So $M\backslash e$ has legs $\{a, b\}$, $\{c, d\}$, and $\{x, y\}$. But then $M$ is a rank-$4$ spike whose set of legs is $\{\{a, b\}, \{c, d\}, \{e, f\}, \{x, y\}\}$, a contradiction. This completes the proof of (ii) and, therefore, (\ref{lem:spike_representations}).

\begin{sublemma}
\label{lem:two_traversals}
For each $x\in E(M)$, the matroid $M\backslash x$ has at least two traversals.
\end{sublemma}

Let $x\in E(M)$, and suppose that $M\backslash x$ has legs $\{a, b\}$, $\{c, d\}$, $\{e, f\}$, and $\{y\}$. Consider each of the minors $M\backslash a$, $M\backslash b$, $M\backslash c$, $M\backslash d$, $M\backslash e$, and $M\backslash f$ as a restriction of a rank-$4$ spike. By~(\ref{lem:spike_representations}), $y$ is in a full leg of each of these matroids and $\{x, y\}$ is a leg of at most two of these matroids. Hence, in at least four of $M\backslash a$, $M\backslash b$, $M\backslash c$, $M\backslash d$, $M\backslash e$, and $M\backslash f$, we have that $y$ is in a $4$-circuit not containing $x$. Since such a $4$-circuit contains three elements of $\{a, b, c, d, e, f\}$, it follows that $M$ has two $4$-circuits containing $y$ but not $x$. As $\{y\}$ is the half leg of $M\backslash x$, this implies that $M\backslash x$ has at least two traversals. This proves (\ref{lem:two_traversals}).

We now complete the proof of the lemma. Let $x \in E(M)$ and consider $M\backslash x$. Combining~(\ref{lem:unique_traversal_intersection}) and~(\ref{lem:two_traversals}), $M\backslash x$ has exactly two traversals, and these traversals meet in the element lying on the half leg. Thus, for all $x\in E(M)$, we have $M\backslash x\cong P_7^*$ and, by duality, $M/x\cong P_7$. By Lemma~\ref{lem: p8}, $M$ is isomorphic to $P_8$.
\end{proof}

\begin{proof}[Proof of Theorem~\ref{main1}]
Since $\cS$ is closed under duality, Lemma~\ref{lemma: small spike-minors} establishes the excluded minors of $\cS$ of rank or corank at most three. Theorem~\ref{main1} now follows by Lemma~\ref{rank4}.
\end{proof}

\section{Proof of Theorem~\ref{them: 3con}}
\label{proof2}

In this section we prove Theorem~\ref{them: 3con}. Recall that $\cS_3$ denotes the subset of~$\cS$ consisting of the $3$-connected members of $\cS$. A $3$-connected matroid $M$ is {\em minimally not in $\cS_3$} if $M$ is not a spike minor but every $3$-connected minor of $M$ is a spike minor. As in the proof of Theorem~\ref{main1}, we partition the proof of Theorem~\ref{them: 3con} into two parts depending on whether a minimal $3$-connected matroid not in $\cS_3$ has rank at most three or rank and corank at least four. 

\begin{lemma}
Let $M$ be a $3$-connected matroid with rank at most three. Then $M$ is minimally not in $\cS_3$ if and only if $M$ is isomorphic to one of $U_{2, 6}$, $O_7$, $O_7^-$, $P_7^-$, $P_7^=$, and $AG(2, 3)\backslash e$.
\label{most3}
\end{lemma}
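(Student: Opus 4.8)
The plan is to prove both directions: first that each listed matroid is minimally not in $\cS_3$, and then that every $3$-connected matroid of rank at most three that is not a spike minor has one of them as a minor.

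First I would handle the ``if'' direction. For each of $U_{2,6}$, $O_7$, $O_7^-$, $P_7^-$, $P_7^=$, and $AG(2,3)\backslash e$, I need to verify two things: (a) the matroid is not a spike minor, and (b) every proper $3$-connected minor of it is a spike minor. For (a), by Lemma~\ref{lemma: spike degeneracy}, a rank-$3$ spike minor is either a restriction of $2U_{3,4}\oplus U_{0,n}$ or a restriction of an overloaded tipped spike $P_7$; in the $3$-connected case the relevant restrictions of $P_7$ (by Lemma~\ref{3conspike}, or directly) are $P_7$, $P_7\backslash t$, and $P_7\backslash\{t,e\}$. One checks that none of $U_{2,6}$, $O_7$, $O_7^-$, $P_7^-$, $P_7^=$, or $AG(2,3)\backslash e$ is isomorphic to one of these, or equivalently that each appears already in the list of Theorem~\ref{main1} excluded minors or contains such an excluded minor ($O_7$ and $O_7^-$ each contain a $U_{2,4}\oplus U_{1,1}$-restriction-type obstruction after a contraction, or more directly contain $\{2\}$-$U_{2,3}\oplus U_{1,2}$ as a minor; $AG(2,3)\backslash e$ contains $\{2,2,2\}$-$U_{2,4}$ after a contraction). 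For (b), each of these matroids has very few $3$-connected single-element deletions and contractions, which can be enumerated directly: $U_{2,6}$ deletes/contracts to $U_{2,5}$ or $U_{1,5}$ (both spike minors, the latter being in $\cS$ by Lemma~\ref{lem:smaller_spike_minors}); for $O_7$, $O_7^-$, $P_7^-$, $P_7^=$ the $3$-connected minors are among $M(\cW_3)$, the rank-$3$ whirl, $U_{2,4}$, $P_7$, and their obvious minors, all of which are spike minors; and for $AG(2,3)\backslash e$ one uses that its $3$-connected proper minors include $M(\cW_3)$, the whirl, and $P_7^{(-)}$-free rank-$3$ matroids, all spike minors. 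This verification is routine but must be done carefully for each matroid; Lemma~\ref{segdelete} helps identify which deletions remain $3$-connected when a long line is present.

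For the ``only if'' direction, let $M$ be $3$-connected with $r(M)\le 3$ and $M$ minimally not in $\cS_3$. The case $r(M)\le 2$ is immediate: a $3$-connected matroid of rank at most two is $U_{0,1}$, $U_{1,1}$, $U_{1,2}$, $U_{1,3}$, or $U_{2,n}$; all of these with $n\le 5$ are spike minors, so $M\cong U_{2,6}$. So assume $r(M)=3$. The strategy is to split on the maximum size $k$ of a rank-$2$ flat (line) of $M$. If $M$ has a long line, i.e.\ $k\ge 4$, then $M|X\cong U_{2,k}$ for a rank-$2$ set $X$; if $|E(M)|\ge 8$ one can delete an element of $X$ by Lemma~\ref{segdelete} to stay $3$-connected, contradicting minimality unless $M\backslash x$ is not a spike minor — so actually one works towards showing $M$ has a small ground set or else contains $U_{2,6}$ as a minor (contracting a point off the line, or noting the line itself is long enough). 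If every line has size at most $3$, then $\si$-type arguments no longer simplify things since $M$ may already be simple; here I would use Lemma~\ref{lem: tt} (Tutte's Triangle Lemma) together with the structure of triangles and triads: a $3$-connected rank-$3$ matroid with all lines of size $\le 3$ and no ``reducible'' element is highly constrained, and a careful case analysis on the number of triangles through a fixed element, the number of three-point lines, and whether $M$ is cosimple should pin $M$ down to a short list, at which point one identifies $O_7$, $O_7^-$, $P_7^-$, $P_7^=$, $AG(2,3)\backslash e$, or reduces to $U_{2,6}$.

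The main obstacle I expect is the rank-$3$, small-line case: unlike in the proof of Theorem~\ref{main1}, where passing to $\si(M)$ collapsed everything to uniform matroids, here $M$ is typically already simple and one genuinely has to analyze rank-$3$ geometries (essentially, arrangements of points and lines in a projective plane) up to a bounded number of points, keeping track of $3$-connectivity. The bookkeeping of which deletions/contractions remain $3$-connected — so that minimality can actually be invoked — is the delicate part, and Lemmas~\ref{3conspike}, \ref{segdelete}, and \ref{lem: tt} are the tools that make it tractable. I would structure this case as a sequence of sublemmas: first bound $|E(M)|$ (showing $|E(M)|\le 8$, else $M$ has a $U_{2,6}$- or $U_{2,4}\oplus U_{1,1}$-type minor forcing a listed obstruction), then split on the number and configuration of three-point lines, and finally match the resulting finite list of geometries against $P_7$, the wheel/whirl $+$ a point, and $AG(2,3)\backslash e$.
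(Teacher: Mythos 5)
Your outline is broadly sound but diverges from the paper in the key step, and the place where it diverges is also the place where it is least concrete.

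Both you and the paper start by reducing to $r(M)=3$ (the $r\le 2$ case giving $U_{2,6}$), and both exploit the fact that if $M$ has a line of length $\ge 5$ then, by Lemma~\ref{segdelete}, deleting a point of it yields a $3$-connected matroid with a $U_{2,4}\oplus U_{1,1}$-minor, so no such line exists. From there the paths split. The paper first shows $|E(M)|\ge 7$ (a deletion or contraction of $M$ would otherwise be one of the small disconnected excluded minors, contradicting $3$-connectivity), then handles $|E(M)|=7$ directly to get $O_7,O_7^-,P_7^-,P_7^=$, and then, for $|E(M)|\ge 8$, proves the central sublemma that \emph{every} single-element deletion is a tipped rank-$3$ spike; this simultaneously forces $|E(M)|=8$ and rules out any $U_{2,4}$-restriction. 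The decisive device at that point is representability: the paper shows $M$ has no $U_{2,5}$-, $U_{3,5}$-, $F_7$- or $F_7^*$-minor, so $M$ is ternary, and a simple rank-$3$ ternary matroid with no $U_{2,4}$-restriction is a restriction of $AG(2,3)$; with $|E(M)|=8$ and $3$-connectivity, $M\cong AG(2,3)\backslash e$. Your plan instead proposes to bound $|E(M)|\le 8$ and then do a bare-hands enumeration of rank-$3$ geometries with all lines of size $\le 3$, filtered by $3$-connectivity and by which single-element deletions/contractions remain $3$-connected. That can be made to work, but it replaces one short, surgical argument (ternary + $AG(2,3)$) with exactly the case analysis you yourself flag as ``the delicate part,'' and your sketch of how you'd get $|E(M)|\le 8$ (``else $M$ has a $U_{2,6}$- or $U_{2,4}\oplus U_{1,1}$-type minor'') is not on its own enough: the bound in the paper comes from the stronger structural fact that every $M\backslash e$ is a tipped $3$-spike, not merely from minor-exclusion. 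If you pursue your route, I'd suggest first proving that sublemma — it is what makes the rest tractable — and consider whether the excluded-minor-to-ternary shortcut buys you more than the geometry enumeration does.
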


\begin{proof}
By considering the excluded minors of $\cS$, it is routine to show that each of the $3$-connected matroids $U_{2, 6}$, $O_7$, $O_7^-$, $P_7^-$, $P_7^=$, and $AG(2, 3)\backslash e$ are minimally not in $\cS_3$. Now suppose that $M$ is a $3$-connected matroid that is minimally not in $\cS_3$ and $r(M)\le 3$. Clearly, $r(M)\ge 2$ and, if $r(M)=2$, then $M$ is isomorphic to $U_{2, 6}$. So assume that $r(M)=3$. Up to duality, we may also assume that $r^*(M) \geq 3$.

\begin{sublemma}
$|E(M)|\geq 7$.
\label{most7}
\end{sublemma}

If $|E(M)|\leq6$, then, by Theorem~\ref{main1}, $M$ has a single-element deletion or contraction isomorphic to one of the matroids $U_{2, 4}\oplus U_{0, 1}$, $U_{1, 3}\oplus U_{1, 1}\oplus U_{0, 1}$, $U_{2, 4}\oplus U_{1, 1}$, and $U_{2, 3}\oplus U_{1, 1}\oplus U_{0, 1}$. But this implies that $M$ has a circuit or cocircuit of size at most two, a contradiction to $3$-connectivity. Thus, (\ref{most7}) holds.

\begin{sublemma}
If $|E(M)|=7$, then $M$ is isomorphic to one of $O_7$, $O_7^{-}$, $P_7^{-}$, and $P_7^{=}$.
\label{seven}
\end{sublemma}

If $M$ is an excluded minor for $\cS$, then, by Theorem~\ref{main1}, $M$ is isomorphic to either $P_7^{-}$ or $P_7^{=}$. On the other hand, if $M$ is not an excluded minor for $\cS$, then there is an element $e\in E(M)$ such that either $M/e$ or $M\backslash e$ is not a spike minor. If $M/e$ is isomorphic to either $U_{1, 3}\oplus U_{1, 3}$ or $\{2\}$-$U_{2, 3}\oplus U_{1, 2}$, then $M$ is $2$-separating, a contradiction. If $M$ has a minor isomorphic to either $U_{1, 3}\oplus U_{1, 1}\oplus U_{0, 1}$ or $U_{2, 3}\oplus U_{1, 1}\oplus U_{0, 1}$, then $M$ has a circuit or cocircuit of size at most two, another contradiction. Therefore, by Theorem~\ref{main1}, $M$ has a minor isomorphic to either $U_{2, 4}\oplus U_{0, 1}$ or $U_{2, 4}\oplus U_{1, 1}$. If $M$ has a minor isomorphic to $U_{2, 4}\oplus U_{0, 1}$, then $M$ has a circuit of size two. This contradiction implies that $M$ consists of a $U_{2, 4}$-restriction with ground set $Y$ and a triad $\{e, f, g\}$. As $M$ has no $U_{2, 6}$-minor, we may assume that $\cl(\{e, g\})\cap Y$ and $\cl(\{f, g\})\cap Y$ are both non-empty. If $\cl(\{e, g\})=\cl(\{f, g\})$, then $M$ is $2$-separating, a contradiction. Thus, $\cl(\{e, g\})$ and $\cl(\{f, g\})$ are distinct. It follows that $M$ is isomorphic to $O_7^-$ if $\cl(\{e,f\}) \cap Y$ is empty, and isomorphic to $O_7$ if $\cl(\{e,f\}) \cap Y$ is non-empty. Thus (\ref{seven}) holds.

We may now assume that $|E(M)|\geq 8$.

\begin{sublemma}
\label{sublem: 3con rk3 a}
$M\backslash e$ is a tipped $3$-spike for all $e\in E(M)$.
\end{sublemma}

If $M$ has a $U_{2,5}$-restriction, then, by Lemma~\ref{segdelete}, deleting an element of this restriction produces a $3$-connected matroid. Furthermore, this matroid has $U_{2, 4}\oplus U_{1, 1}$, an excluded minor of $\cS$, as a minor. Thus, $M$ has no $U_{2,5}$-restriction. Suppose $M\backslash e$ has a $2$-separation $(X, Y)$. Since $M$ has rank~$3$ and is simple, $M|X$ and $M|Y$ are each isomorphic to a rank-$2$ uniform matroid. Furthermore, as $|E(M)|\geq 8$ and $M$ has no $U_{2, 5}$-restriction, either $|X|=4$ or $|Y|=4$. Without loss of generality, say $|X|=4$. Let $y\in Y$. Then, as $M$ is $3$-connected and $|Y\cup \{e\}|\ge 4$, the matroid $M\backslash y$ is $3$-connected. So $M\backslash y$ is a rank-$3$ spike minor. But $(M\backslash y)|X\cong U_{2, 4}$, a contradiction. Hence, $M\backslash e$ is $3$-connected for all $e\in E(M)$, and so $M \backslash e$ is a spike minor. Furthermore, $M \backslash e$ is a tipped spike, since $|E(M)| \geq 8$. This completes the proof of (\ref{sublem: 3con rk3 a}).

As a consequence of (\ref{sublem: 3con rk3 a}), we have that $|E(M)|=8$ and that $M$ has no $U_{2, 4}$-restriction. We next argue that $M$ has none of the matroids $U_{2, 5}$, $U_{3, 5}$, $F_7$, or $F_7^*$ as a minor, thereby showing that $M$ is ternary~\cite{bix79, sey79}.

\begin{sublemma}
\label{sublem: rk3 no U25}
$M$ has no $U_{2, 5}$-minor or $U_{3, 5}$-minor.
\end{sublemma}

By Lemma~\ref{lem: U25 U35}, it suffices to prove that $M$ has no $U_{2,5}$-minor. Suppose to the contrary that $M$ has such a minor. Then there are elements $\{e, f, g\}\in E(M)$ such that $M/e\backslash\{f, g\}\cong U_{2, 5}$. If $M/e$ has a parallel class of size three, then $M$ has a $U_{2, 4}$-restriction, a contradiction. Thus, as $M$ has no $U_{2, 6}$-minor, we deduce that $M/e$ is isomorphic to $\{2, 2\}$-$U_{2, 5}$. This means that $e$ is contained in exactly two triangles of $M$. Now consider $M \backslash e$. This is a tipped $3$-spike, so contains an element $t$ which is contained in three triangles of $M \backslash e$. The elements of $M \backslash e$ can be contained in at most two triangles of $M \backslash e$ which do not contain $t$. However, they cannot then be contained in a triangle containing $e$, as this would imply that $M$ has a $U_{2,4}$-restriction. Therefore, $M\backslash t$ has no element in at least three triangles, contradicting (\ref{sublem: 3con rk3 a}). Thus, $M$ has no $U_{2, 5}$-minor and (\ref{sublem: rk3 no U25}) holds.

\begin{sublemma}
	\label{sublem: rk3 no F7}
	$M$ has no $F_7$-minor or $F_7^*$-minor.
\end{sublemma}

Since $r(M)=3$, it follows that $M$ has no $F_7^*$-minor. Thus it remains to show that $M$ has no $F_7$-minor. As $|E(M)|=8$, this reduces to checking that $M$ is not a single-element extension of $F_7$. It is easily checked that each single-element extension of $F_7$ has a loop, a non-trivial parallel class, a $U_{2, 5}$-restriction, or a $U_{3, 5}$-restriction. Since $M$ is $3$-connected, it follows by (\ref{sublem: rk3 no U25}) that (\ref{sublem: rk3 no F7}) holds.

Thus, $M$ is ternary. We now note that every simple rank-$3$ ternary matroid with no $U_{2, 4}$-restriction is a restriction of $AG(2, 3)$. Since $|E(M)|=8$ and $M$ is $3$-connected, we deduce that $M\cong  AG(2, 3)\backslash e$. This completes the proof of Lemma~\ref{most3}.
\end{proof}

\begin{lemma}
Let $M$ be a $3$-connected matroid with rank and corank at least four. Then $M$ is minimally not in $\cS_3$ if and only if $M$ is isomorphic to one of $P_8$, $M(\cW_4)$, or $\cW^4$.
\label{least4}
\end{lemma}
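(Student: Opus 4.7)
The plan is to establish the forward direction by verifying each of $P_8$, $M(\cW_4)$, and $\cW^4$ directly, and then to handle the converse by splitting on whether $M$ is an excluded minor for $\cS$, a wheel or whirl, or neither, and ruling out the last possibility via the splitter theorem and a structural analysis of the spike restriction obtained after deleting a single element.

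For the forward direction, I would first observe that $P_8\notin \cS$ by Lemma~\ref{rank4}; since every proper minor of $P_8$ lies in $\cS$, every 3-connected proper minor lies in $\cS_3$. Next, for $M(\cW_4)$, Corollary~\ref{binary} gives $M(\cW_4)\notin \cS$, and the fact that a wheel has no 3-connected single-element deletion or contraction reduces the task to showing that each 3-connected proper minor of $M(\cW_4)$, which has at most six elements, lies in $\cS_3$. Because $M(\cW_4)$ is binary while each of $U_{2, 6}, O_7, O_7^-, P_7^-, P_7^=, AG(2,3)\backslash e$ in Lemma~\ref{most3} is non-binary, no such proper minor is minimally not in $\cS_3$. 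The treatment of $\cW^4$ is analogous, but $\cW^4\notin \cS$ would be verified directly by comparing its circuit structure with that of the rank-$4$ spikes via Lemmas~\ref{pairs} and~\ref{lemma: spike degeneracy}; its 3-connected proper minors again have at most six elements, and a short check shows $\cW^4$ has no $U_{2,6}$-minor.

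For the converse, let $M$ be 3-connected with $r(M), r^*(M)\ge 4$ and minimally not in $\cS_3$. If $M$ is an excluded minor for $\cS$, then Lemma~\ref{rank4} immediately yields $M\cong P_8$, so assume henceforth that $M$ is not an excluded minor. Then $M$ (or $M^*$) has a proper minor $N$ isomorphic to some excluded minor listed in Theorem~\ref{main1}. If $N$ is 3-connected, then $N \in \{U_{2,6}, U_{4,6}, P_7^-, (P_7^-)^*, P_7^=, (P_7^=)^*, P_8\}$; each has at least four elements and is not a wheel or whirl, so Seymour's Splitter Theorem produces a 3-connected minor of $M$ with one fewer element still containing $N$ as a minor. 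By minimality this proper minor lies in $\cS_3$, contradicting $N\notin \cS$; hence $N$ must instead be one of the non-3-connected excluded minors.

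It remains to show that, in this last case, $M$ must be a wheel or whirl. I apply Tutte's wheels-and-whirls theorem: if $M$ is not a wheel or whirl, some $M\backslash e$ or $M/e$ is 3-connected, and by duality I may assume it is $M\backslash e$. Since $M$ is cosimple, $r(M\backslash e)=r(M)\ge 4$; by minimality $M\backslash e \in \cS_3$, and by Lemma~\ref{3conspike} it is one of $\Phi_r$, $\Phi_r\backslash t$, $\Phi_r\backslash f$, or $\Phi_r\backslash\{t, f\}$ for some rank-$r$ tipped spike $\Phi_r$. The main obstacle of the proof is to analyse the 3-connected single-element extensions of each of these spike-type matroids and show, using Lemmas~\ref{circuits} and~\ref{pairs} together with cocircuit orthogonality and Tutte's Triangle Lemma (Lemma~\ref{lem: tt}), that each such extension lies in $\cS$, contradicting $M\notin \cS$. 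Once this is established, $M$ must be a wheel or whirl, and since $r(M)\ge 4$ I can write $M\cong M(\cW_n)$ or $\cW^n$ for some $n\ge 4$. For $n\ge 5$, a suitable pair of contractions and deletions around the rim produces $M(\cW_4)$ or $\cW^4$ as a 3-connected proper minor, which is not in $\cS$ by the forward direction and so contradicts minimality of $M$. Hence $n=4$ and $M \in \{M(\cW_4), \cW^4\}$, completing the proof.
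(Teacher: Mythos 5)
Your proposal diverges substantially from the paper's proof, and there is a genuine gap at the step you yourself flag as ``the main obstacle''.

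First, a structural remark: the Splitter Theorem invocation is unnecessary. If $N$ is a $3$-connected excluded minor for $\cS$ and a proper minor of $M$, then $N$ is a $3$-connected proper minor of the minimally-not-in-$\cS_3$ matroid $M$, so $N\in\cS_3\subseteq\cS$, which already contradicts $N$ being an excluded minor. So that whole case disposes of itself with no machinery, and the split into ``$3$-connected vs.\ non-$3$-connected excluded minor'' does not actually carry any weight in organising the argument.

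The real problem is the final case. You propose to show that every $3$-connected single-element extension of a $3$-connected restriction of a tipped spike (one of $\Phi_r$, $\Phi_r\backslash t$, $\Phi_r\backslash f$, $\Phi_r\backslash\{t,f\}$) itself lies in $\cS$, contradicting $M\notin\cS$. This is false as stated. If $M\backslash e\cong\Phi_r$, then $M$ is simple (being $3$-connected with at least four elements), has rank $r$, and has $2r+2$ elements. By Lemma~\ref{lemma: spike degeneracy}, a simple rank-$r$ matroid in $\cS$ has at most $2r+1$ elements, so $M$ cannot lie in $\cS$ -- no amount of orthogonality or Tutte's Triangle Lemma can change that. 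In the paper's proof this case simply never arises, because the paper first proves (via sublemmas built on the pair $E_d$, $E_c$ of elements whose deletion/contraction preserves $3$-connectivity) that $M$ has no triangles and no triads; since $\Phi_r$ is full of triangles, $M\backslash e\not\cong\Phi_r$. To salvage your plan you would need to replace the claim ``$M\in\cS$'' with ``$M$ either lies in $\cS$ or has a $3$-connected proper minor not in $\cS$'', and then prove it, which is essentially the content of sublemmas (\ref{sublem: no 4pl})--(\ref{sublem: Ed Ec}) in the paper -- the part of the proof you leave open. Without that work, your reduction of the remaining case to ``$M$ is a wheel or whirl'' does not go through, and the proposal is incomplete.
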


\begin{proof}
Using Theorem~\ref{main1}, it is easily checked that each of the $3$-connected matroids $P_8$, $M(\cW_4)$, and $\cW^4$ is minimally not in $\cS_3$. Now suppose that $M$ is a $3$-connected matroid that is minimally not in $\cS_3$. Let
$$E_d=\{e\in E(M) : M\backslash e \text{ is $3$-connected} \}$$
and
$$E_c=\{e\in E(M) : M/ e \text{ is $3$-connected} \}.$$
If $E_c$ and $E_d$ are both empty, then, by Tutte's Wheels-and-Whirls Theorem~\cite{tut66}, $M$ is isomorphic to either a wheel or a whirl. In particular, $M$ is isomorphic to either $M(\cW_4)$ or $\cW^4$. Therefore, assume that at least one of $E_c$ or $E_d$ is non-empty.

\begin{sublemma}
\label{sublem: no 4pl}
$M$ has no $U_{2, 4}$-restriction.
\end{sublemma}

Suppose there exists $X \subseteq E(M)$ such that $M|X\cong U_{2, 4}$. By Lemma~\ref{segdelete}, $M\backslash e$ is $3$-connected for all $e\in X$, and so $M\backslash e$ is a spike minor. For all $e \in X$, the set $X - \{e\}$ is a triangle of $M \backslash e$. Therefore, as $r(M) \geq 4$, there is a unique element of $X-\{e\}$ which is the tip of $M\backslash e$. In particular, at least two elements of $X$, say $f$ and $g$, are each contained in a triangle of $M$ not in $X$. But then, by Lemma~\ref{lemma: spike degeneracy}, $M\backslash h$, where $h\in X-\{f, g\}$, is not a spike minor, a contradiction. Thus, (\ref{sublem: no 4pl}) holds.

\begin{sublemma}
\label{sublem: T,T*}
If $M$ has a triangle $T$, then $E_d\subseteq T$. Dually, if $M$ has a triad $T^*$, then $E_c\subseteq T^*$.
\end{sublemma}

Suppose that $T=\{t, a, b\}$ is a triangle of $M$ and suppose to the contrary that there is an element $x\in E_d-T$. Consider $M\backslash x$, and let $r$ denote the rank of $M$. As $M\backslash x$ is $3$-connected, it follows by Lemma~\ref{3conspike} that $M\backslash x$ is isomorphic to either $\Phi_r$ or $\Phi_r\backslash z$, where $\Phi_r$ is a rank-$r$ tipped spike and $z$ is a non-tip element in $E(\Phi_r)$. Without loss of generality, we may assume that $t$ is the tip of $M\backslash x$. First assume that $M\backslash x\cong \Phi_r$ and let $e\in E(M)-\{t, x\}$. Then, by Lemma~\ref{3conspike}, $(M\backslash x)\backslash e\cong \Phi_r\backslash e$ is $3$-connected. Since $M$ is $3$-connected, $M\backslash e$ is also $3$-connected. In particular, $M\backslash e$ is a tipped spike whose tip element is $t$. Thus $\{x, f\}$ is a leg of $M\backslash e$ for some element $f\in E(M)-\{t, e, x\}$. The set $\{x,f,t\}$ is a triangle of $M \backslash e$, and $f$ is not contained in any other triangle of $M \backslash e$. In $M \backslash x$, the element $f$ is contained in a triangle with the tip $t$. This triangle must be $\{e,f,t\}$. But then $\{e, f, x, t\}$ is a $U_{2, 4}$-restriction in $M$, contradicting (\ref{sublem: no 4pl}).

Now assume that $M\backslash x\cong \Phi_r\backslash z$ for some non-tip element $z\in E(\Phi_r)$. Let $g$ be the unique element in a half leg of $M \backslash x$. Then $(M/g)\backslash x=(M\backslash x)/ g$ is a rank-($r-1$) tipped spike, and so $(M/g)\backslash x$ is $3$-connected. However, $M/g$ is not $3$-connected as it has one too many elements to be a simple restriction of a rank-$(r-1)$ tipped spike. Thus, $x$ is in a triangle with $g$ in $M$. If $\{t, g, x\}$ is a triangle, then $M$ is isomorphic to a tipped spike, a contradiction. Therefore, there is an element $e\in (E(M)-\{t, g, x\})$ such that $\{e, g, x\}$ is a triangle of $M$.

By Lemma~\ref{3conspike}, the matroid $M \backslash x \backslash t$ is $3$-connected, so $M \backslash t$ is also $3$-connected. Hence, $M \backslash t$ is a $3$-connected restriction of a tipped spike. Since $M \backslash t$ contains the triangle $\{e,g,x\}$, Lemma~\ref{3conspike} implies that $M \backslash t$ is isomorphic to a tipped spike with a single non-tip element deleted. The tip of $M \backslash t$ must be $x$, as otherwise $M \backslash x$ would contain a triangle which does not contain the tip $t$.

Let $\{a,b\}$ be a full leg of $M \backslash x$ which does not contain $e$. The set $\{t,a,b\}$ is a triangle of $M$, so Tutte's Triangle Lemma implies that either $M \backslash a$ or $M \backslash b$ is $3$-connected if $\{t,a,b\}$ does not intersect a triad of $M$. The only triad of $M \backslash x$ which intersects $\{t,a,b\}$ is $\{a,b,g\}$. By orthogonality with the triangle $\{e,g,x\}$, the set $\{a,b,g\}$ is not a triad of $M$. Therefore, $\{t,a,b\}$ does not intersect a triad of $M$, so, without loss of generality, we may assume that $M \backslash a$ is $3$-connected. However, since $r(M) \geq 4$, each of $M \backslash x$ and $M \backslash t$ contain two triangles which do not contain the element $a$. Therefore, $M \backslash a$ has two triangles which intersect at $t$ and two triangles which intersect at $x$. This means that $M \backslash a$ is not a spike minor, a contradiction which completes the proof of (\ref{sublem: T,T*}).

\begin{sublemma}
\label{sublem: xy}
If $x\in E_d$ and $y\in E(M)-(E_c\cup \{x\})$, then $\{x, y\}$ is a subset of a triangle. Dually, if $x\in E_c$ and $y\in E(M)-(E_d\cup \{x\})$, then $\{x, y\}$ is a subset of a triad.
\end{sublemma}

Let $x\in E_d$ and $y\in (E(M)-(E_c \cup \{x\}))$. If $M\backslash x$ has a triangle, then we have a contradiction to (\ref{sublem: T,T*}). Thus, $M\backslash x$ is isomorphic to either $\Phi_r\backslash t$ or $\Phi_r\backslash \{t, z\}$, where $\Phi_r$ is a rank-$r$ tipped spike with tip $t$ and $z\in E(\Phi_r)-\{t\}$. In turn, this implies that $M\backslash x/y$ is $3$-connected. As $M/y$ is not $3$-connected, we deduce that $\{x, y\}$ is a subset of a triangle, thereby proving (\ref{sublem: xy}).

\begin{sublemma}
\label{sublem: EdEc ne}
$E_d\neq \emptyset$ and $E_c\neq \emptyset$.
\end{sublemma}

Up to duality, $E_d$ is non-empty. Let $t\in E_d$. If $E_c$ is empty, then, by (\ref{sublem: T,T*}) and (\ref{sublem: xy}), $t$ is the unique element in $E_d$ and every other element is in a triangle with $t$. By (\ref{sublem: no 4pl}), each of these triangles intersect only at $t$. Since the matroid $M \backslash t$ is a spike minor, we see that the disjoint union of any two triangles is a cocircuit. Therefore, $M$ is a restriction of a tipped spike with tip $t$, a contradiction. Hence, $E_c$ is non-empty, thus proving (\ref{sublem: EdEc ne}).

\begin{sublemma}
\label{sublem:notriangle}
$M$ has no triangles and no triads.
\end{sublemma}

Suppose that $M$ has a triangle $T=\{t, a, b\}$. By (\ref{sublem: T,T*}), $E_d\subseteq T$. Since $E_d$ is non-empty, we may assume that $t\in E_d$.

First assume that $M$ has a triad $T^*$. By (\ref{sublem: T,T*}) and (\ref{sublem: EdEc ne}), $E_c\subseteq T^*$ and $E_c$ is non-empty. Let $t^*\in E_c$. If $e$ is an element of $E_c-\{t^*\}$, then, by (\ref{sublem: T,T*}), $e\in T^*$. Let $f \in E(M) - (T \cup T^*)$. By (\ref{sublem: T,T*}), $f \notin E_d$ and $f \notin E_c$. Therefore, by (\ref{sublem: xy}), there is a triad containing $\{t^*, f\}$. By (\ref{sublem: T,T*}), this triad contains $\{e\}$, so $\{t^*, e, f\}$ is a triad of $M$. Furthermore, by (\ref{sublem: T,T*}) again, $\{t, f\}$ is contained in a triangle. But neither $t^*$ nor $e$ are in this triangle (as $M / t^*$ and $M / e$ are $3$-connected), a contradiction to orthogonality. Thus, $E_c=\{t^*\}$. By a dual argument, $E_d=\{t\}$. By (\ref{sublem: xy}), for all $f\in E(M)-\{t, t^*\}$, we have that $f$ is in a triangle with $t$ and $f$ is in a triad with $t^*$. In particular, to avoid a contradiction to orthogonality, there is a partition $\{\{x_1, x_2, \ldots, x_k\}, \{y_1, y_2, \ldots, y_k\}\}$ of $E(M)-\{t, t^*\}$ such that $\{x_i, y_i, t\}$ is a triangle and $\{x_i, y_i, t^*\}$ is a triad for all $i$. But then $M$ is a spike with tip and cotip by Lemma~\ref{lem: tip_and_cotip}, a contradiction. Thus, $M$ has no triads.

By (\ref{sublem: EdEc ne}), $E_c$ is non-empty. Let $x\in E_c$. Then, as $|E_d|\le 3$ by (\ref{sublem: T,T*}) and $|E(M)|\ge 8$, there exists an element $y\in (E(M)-(E_d\cup \{x\}))$. But then, by (\ref{sublem: xy}), $\{x, y\}$ is contained in a triad, a contradiction. Therefore $M$ has no triangles and, by duality, $M$ has no triads. Thus, (\ref{sublem:notriangle}) holds.

\begin{sublemma}
\label{sublem: Ed Ec}
If $x\in E_d$, then $(E(M)-\{x\})\subseteq E_c$. Dually, if $y\in E_c$, then $(E(M)-\{y\})\subseteq E_d$.
\end{sublemma}

Let $x\in E_d$. By (\ref{sublem:notriangle}), $M\backslash x$ is a $3$-connected restriction of a tipless spike. If $y\in E(M)-\{x\}$, then $M\backslash x/y$ is a $3$-connected restriction of a tipped spike, and so $M/y$ is $3$-connected unless $x$ and $y$ are contained in a triangle. By (\ref{sublem:notriangle}), $M$ has no triangles, and so (\ref{sublem: Ed Ec}) holds.

By repeated applications of (\ref{sublem: Ed Ec}), we deduce that $E_d=E_c=E(M)$. In particular, every single-element deletion and every single-element contraction of $M$ is a spike minor, and so $M$ is an excluded minor for $\cS$ of rank and corank at least four. In particular, by Theorem~\ref{main1}, $M\cong P_8$. This completes the proof of Lemma~\ref{least4}.
\end{proof}

\begin{proof}[Proof of Theorem~\ref{them: 3con}]
Since $\cS_3$ is closed under duality, it follows by Lemma~\ref{most3} that the theorem holds if $M$ has rank or corank at most three. Theorem~\ref{them: 3con} now follows by Lemma~\ref{least4}.
\end{proof}

\section*{Acknowledgements} The second and sixth authors were supported by the New Zealand Marsden Fund. The work for this paper began at the 2nd New Zealand Postgraduate Workshop in Matroids held in Westport, New Zealand in 2021.

\end{document}